\newtheorem{theorem}{Theorem}[section]
\newtheorem{theorem*}{Theorem}
\newtheorem{corollary}[theorem]{Corollary}
\newtheorem{lemma}[theorem]{Lemma}
\newtheorem{observation}[theorem]{Observation}
\newtheorem{proposition}[theorem]{Proposition}
\newtheorem{definition}[theorem]{Definition}
\newtheorem{fact}[theorem]{Fact}
\theoremstyle{definition}
\newtheorem{problem}{Problem}
\newtheorem{remark}[theorem]{Remark}
\renewcommand{\vec}[1]{\mathbf{#1}}
\newcommand{\GL}{\mathrm{GL}}
\newcommand{\PGL}{\mathrm{PGL}}
\newcommand{\Aut}{\mathrm{Aut}}
\newcommand{\conjc}{\mathrm{Conj}}
\newcommand{\fdim}{\mathrm{f}}
\newcommand{\F}{\mathbb{F}}
\newcommand{\Fix}{\mathrm{Fix}}
\newcommand{\Z}{\mathbb{Z}}
\newcommand{\Q}{\mathbb{Q}}
\newcommand{\N}{\mathbb{N}}
\newcommand{\M}{\mathrm{M}}
\newcommand{\id}{\mathrm{id}}
\newcommand{\tr}[1]{{#1}^{\mathrm{t}}}
\newcommand{\Spec}{\mathrm{Spec}}
\newcommand{\mul}{\mathrm{Mul}}
\newcommand{\Gr}{\mathrm{Gr}}
\newcommand{\twop}{{\mathfrak{B}_{p,2}}}
\newcommand{\higp}{{\mathfrak{H}_p}}
\newcommand{\higtwo}{{\mathfrak{H}_2}}
\newcommand{\cA}{\mathcal{A}}
\newcommand{\cG}{\mathcal{G}}
\newcommand{\linspan}{\mathrm{span}}
\newcommand{\tP}{\conjc(P)}
\newcommand{\tQ}{\conjc(Q)}
\renewcommand{\S}{\mathrm{S}}
\title{
On average orders of automorphism groups \\
of bilinear maps over finite fields
}
\author{
Markus Bl\"aser 
\thanks{Saarland University, Saarland Informatics Campus, Saarbr{\"u}cken, Germany (\texttt{mblaeser@cs.uni-saarland.de}).}
\and 
Yinan Li \thanks{School of Artificial Intelligence, Wuhan University, National Center for Applied Mathematics in Hubei, and Hubei Key Laboratory of Computational Science, Wuhan, China (\tt Yinan.Li@whu.edu.cn).}
\and 
Youming Qiao \thanks{
	Centre for Quantum Software and Information, University of Technology Sydney, Sydney, Australia  ({\tt jimmyqiao86@gmail.com}).}
\and Alexander Rogovskyy
\thanks{Saarland University, Saarland Informatics Campus, Saarbr{\"u}cken, Germany (\texttt{rogovskyy@cs.uni-saarland.de}).}
}
\date{\today}
\begin{document}

\maketitle

\begin{abstract}
	Let $\varphi:V\times V\to W$ be a bilinear map of finite vector spaces $V$ and $W$ over a finite field $\F_q$. We present asymptotic bounds on the number of isomorphism classes of bilinear maps under the natural action of $\GL(V)$ and $\GL(W)$, when $\dim(V)$ and $\dim(W)$ are linearly related.

	As motivations and applications of the results, we present almost tight upper bounds on the number of $p$-groups of Frattini class $2$ as first studied by Higman (\emph{Proc. Lond. Math. Soc.}, 1960). Such bounds lead to answers for some open questions by Blackburn, Neumann, and Venkataraman (\emph{Cambridge Tracts in Mathematics}, 2007). Further applications include sampling matrix spaces with the trivial automorphism group, and asymptotic bounds on the number of isomorphism classes of finite cube-zero commutative algebras.
%
%
\end{abstract}

\section{Introduction}

\subsection{Main results}



Let $q$ be a prime power, and let $\F_q$ be the finite field of order $q$. We consider bilinear maps $\varphi: V\times V\to W$, where $V\cong \F_q^n$ and $W\cong \F_q^m$.
A bilinear map $\varphi: V\times V\to W$ is \emph{symmetric}, if $\varphi(v_1,v_2)=\varphi(v_2,v_1)$ for any $v_1,v_2\in V$. It is \emph{alternating}, if $\varphi(v,v)=0$ for all $v\in V$. 

The general linear groups $\GL(V)$ and $\GL(W)$ naturally act on bilinear maps $\varphi: V\times V\to W$ as follows. For any $P\in\GL(V)$, $Q\in\GL(W)$, $(P,Q)\circ\varphi:V\times V\to W$ is defined by 
\[
[(P,Q)\circ\varphi](v_1,v_2)=Q^{-1}(\varphi(P(v_1),P(v_2))), 
\]
for $v_1, v_2\in V$.
We say two bilinear maps $\varphi_1,\varphi_2: V\times V\to W$ are \emph{isomorphic}, if there exist $P\in\GL(V)$ and $Q\in\GL(W)$ such that $(P,Q)\circ\varphi_1=\varphi_2$. It is clear that this action of $\GL(V)$ and $\GL(W)$ preserves symmetric and alternating properties of bilinear maps. 

For a bilinear map $\varphi:V\times V\to W$, let $\Aut(\varphi)=\{(P, Q)\in\GL(V)\times\GL(W)\mid (P, Q)\circ \varphi=\varphi\}$. Let $\id_V$ and $\id_W$ be the identity map in $\GL(V)$ and $\GL(W)$, respectively. We say that $\Aut(\varphi)$ is trivial, if $\Aut(\varphi)=\{(\alpha \id_V, \beta\id_W)\mid \alpha, \beta\in \F_q^{\times}, \alpha^2\beta=1\}$. 

Recall that $V\cong \F_q^n$ and $W\cong \F_q^m$. Let 
$$h(n, m, q):=\frac{\sum_{\varphi:V\times V\to W}|\Aut(\varphi)|}{|\{\varphi:V\times V\to W\}|}$$ 
be the average order of automorphism groups over all $\varphi:V\times V\to W$. Similarly, we can define average automorphism group orders for symmetric or alternating bilinear maps, denoted by $h_\S(n, m, q)$ and $h_\Lambda(n, m, q)$, respectively. 

The main result of this paper is the following.
\begin{theorem}\label{thm: bilinear maps}
	For $C, R\in \Q$ with $C>0$ and $n\in \N$, let $m=m(n)=\lceil C\cdot n+R\rceil$. 
For large enough $n\in \N$, $h(n, m, q)$, $h_\S(n, m, q)$, and $h_\Lambda(n, m, q)$ are upper bounded by  $q-1+\frac{1}{q^{\Omega(n^2)}}$. 
\end{theorem}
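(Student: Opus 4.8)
The plan is to bound the average order of the automorphism group by a weighted count over orbits, and to show that the overwhelming majority of bilinear maps have trivial automorphism group while the (few) exceptional maps cannot contribute too much mass. The starting point is the orbit-counting identity: writing $\orbset$ for the set of isomorphism classes (orbits) of bilinear maps $\varphi:V\times V\to W$ under the $\GL(V)\times\GL(W)$-action, the orbit-stabilizer theorem gives $|\Aut(\varphi)|=|\GL(V)||\GL(W)|/|\text{orbit of }\varphi|$, so that
\begin{equation*}
h(n,m,q)=\frac{\sum_{\varphi}|\Aut(\varphi)|}{q^{mn^2}}=\frac{|\orbset|\cdot|\GL(V)||\GL(W)|}{q^{mn^2}}.
\end{equation*}
Thus the entire problem reduces to an \emph{upper bound on the number of orbits} $|\orbset|$: if one can show $|\orbset|\le (q-1)\cdot q^{mn^2}/(|\GL(V)||\GL(W)|)+\text{(small correction)}$, one is done. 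Since $|\GL(V)||\GL(W)|=q^{n^2+m^2}\prod(1-q^{-i})\cdots$ is, up to a factor $1-O(q^{-1})$, equal to $q^{n^2+m^2}$, and $q^{mn^2}/q^{n^2+m^2}$ is astronomically larger than the number of orbits one expects, the target bound $q-1+q^{-\Omega(n^2)}$ is really a statement that $|\orbset|\le (q-1+q^{-\Omega(n^2)})q^{mn^2-n^2-m^2}(1+o(1))$. The same reduction applies verbatim to the symmetric and alternating cases, replacing $q^{mn^2}$ by $q^{m\binom{n+1}{2}}$ and $q^{m\binom{n}{2}}$ respectively.

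The second and main ingredient is a dichotomy on automorphism group sizes. Partition the maps into those with \emph{trivial} $\Aut$ (order exactly $q-1$, namely the scalars $(\alpha\id_V,\beta\id_W)$ with $\alpha^2\beta=1$) and those with \emph{nontrivial} $\Aut$ (order $\ge$ some $(q-1)\cdot q$, since any larger stabilizer inside $\GL(V)\times\GL(W)$ jumps by at least a factor $q$ — one should verify this ``gap'' carefully, e.g.\ that a subgroup of $\GL_n(\F_q)\times\GL_m(\F_q)$ properly containing the scalar subgroup has index-type jump at least $q$, or more crudely that $|\Aut(\varphi)|\le |\GL(V)||\GL(W)|\le q^{n^2+m^2}$). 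Writing $B$ for the set of ``bad'' maps with nontrivial automorphism group, we get
\begin{equation*}
\sum_{\varphi}|\Aut(\varphi)| \le (q-1)\cdot q^{mn^2} + |B|\cdot q^{n^2+m^2}.
\end{equation*}
Dividing by $q^{mn^2}$, the claim follows provided $|B|\le q^{mn^2-n^2-m^2-\Omega(n^2)}$, i.e.\ the bad maps form a $q^{-\Omega(n^2)}$-fraction after accounting for the $q^{n^2+m^2}$ loss. Concretely, since $m\ge Cn-O(1)$, one needs $|B|\le q^{mn^2}\cdot q^{-n^2-m^2-\Omega(n^2)}$, and because $mn^2$ dominates $n^2+m^2$ by a factor $\sim\min(m,n^2)\to\infty$, it suffices to show $|B|\le q^{mn^2-\Omega(n^2)}\cdot q^{-m^2}$, or even just $|B|/q^{mn^2}\le q^{-\Omega(n^2)}$ after dividing out the group factor — so really the heart is: \emph{the fraction of bilinear maps with a nontrivial automorphism is at most $q^{-\Omega(n^2)}$}.

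To bound $|B|$, I would stratify by the type of nontrivial automorphism $(P,Q)$ and count maps fixed by it. Fix a non-scalar $(P,Q)\in\GL(V)\times\GL(W)$ (up to conjugacy / rational canonical form, so there are at most $q^{O(n^2+m^2)}=q^{O(n^2)}$ conjugacy classes to consider). The set of $\varphi$ with $(P,Q)\circ\varphi=\varphi$ is the solution set of a \emph{linear system} in the $mn^2$ coordinates of $\varphi$, namely $Q\cdot\varphi(v_1,v_2)=\varphi(Pv_1,Pv_2)$; its dimension as an $\F_q$-vector space is $mn^2$ minus the rank of this linear map, and the key estimate is that for any non-scalar $(P,Q)$ this rank is at least $\Omega(n^2)$ — equivalently the fixed space has codimension $\Omega(n^2)$. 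Then a union bound over the $q^{O(n^2)}$ conjugacy classes gives $|B|\le q^{O(n^2)}\cdot q^{mn^2-\Omega(n^2)}$, and one needs the hidden constant in the codimension $\Omega(n^2)$ to beat the $O(n^2)$ from the number of classes — this is exactly where the linear relation $m=\Theta(n)$ and ``large enough $n$'' are used, and it is the main obstacle. The codimension estimate itself should be proved by a case analysis on $(P,Q)$: if $P$ is non-scalar, pick an eigenvector-type obstruction (or use that $P$ acting on $V\otimes V$ has a large non-fixed subspace, forcing many independent constraints on $\varphi$); if $P$ is scalar $\alpha\id_V$ but $Q$ is not $\alpha^2\id_W$, then the constraint reads $Q\varphi=\alpha^2\varphi$ coordinatewise in $W$, and since $Q-\alpha^2\id_W\ne 0$ has rank $\ge 1$ this kills a $(1/m)$-fraction of each of the $n^2$ many $W$-coordinates, again $\Omega(n^2)$ constraints; the symmetric/alternating cases only change $n^2$ into $\binom{n+1}{2}$ or $\binom{n}{2}$, both still $\Theta(n^2)$. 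I expect the delicate point to be making the codimension bound uniform over \emph{all} non-scalar pairs simultaneously — in particular handling pairs where $P$ is ``close to scalar'' (e.g.\ a single Jordan block perturbation) so that the obvious obstruction only gives $\Omega(n)$ constraints rather than $\Omega(n^2)$; resolving this likely requires combining the $V$-side and $W$-side constraints and exploiting $m=\Theta(n)$ to still extract $\Omega(n^2)$ independent equations, and is the step I would budget the most effort for.
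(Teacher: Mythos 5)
Your opening reduction (orbit--stabilizer/Burnside, so that bounding $h$ is the same as bounding the number of orbits, equivalently the fixed-point sum) is exactly the paper's starting point. The gap is in the accounting that follows. The dichotomy bound $\sum_\varphi|\Aut(\varphi)|\le (q-1)q^{mn^2}+|B|\cdot q^{n^2+m^2}$ is far too lossy, because $B$ is genuinely large: for instance every $\varphi$ fixed by $(\id_V,Q)$, where $Q$ has eigenvalue $1$ with multiplicity $m-1$, lies in $B$, and the fixed space of such a pair has codimension only $\dim(V\Box V)\le n^2$; hence $|B|\ge q^{mn^2-n^2}$, the second term is at least $q^{mn^2+m^2}$, and your required estimate $|B|\le q^{mn^2-n^2-m^2-\Omega(n^2)}$ is simply false. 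No refinement of the codimension argument can rescue this, since for these almost-scalar pairs the fixed space really is that big: the step you ``budget the most effort for'' (extracting $\Omega(n^2)$, or implicitly more than $n^2+m^2$, independent constraints for every non-scalar pair) has no proof because the statement fails. A secondary issue: your union bound over conjugacy classes to control $|B|$ is not valid as written, since conjugate pairs fix different sets of maps; you must either sum over all group elements (i.e.\ weight each class by its size) or count bad \emph{orbits} via canonical-form representatives --- but even the latter, unweighted, count is defeated by the same almost-scalar classes.

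The missing idea is that near-scalar pairs must be handled by bounding the \emph{group-element side}, not the fixed-space side. The paper keeps the full Burnside sum $\sum_{(P,Q)}|\Fix(P,Q)|\cdot q^{-dm}$, groups it by conjugacy classes (there are only $q^{O(n)}$ of them, Lemma~\ref{lem:total_num_conj_classes}, not $q^{O(n^2)}$), and splits into three regimes: both $P,Q$ scalar (contributing $q-1$); $(P,Q)$ almost scalar but not scalar, where the conjugacy class size is only $q^{O(n)}$ (Lemma~\ref{lem:almost_scalar_conj_size}) so a fixed-space codimension of merely $\Omega(n^2)$ (Proposition~\ref{prop:almost-scalar}) suffices; and all remaining classes, where a case analysis on the rational normal forms (largest block size, number of blocks of size $\ge 2$, number of distinct eigenvalues, dominant-eigenvalue multiplicity; Propositions~\ref{prop:largest-P}--\ref{prop: bounded}) shows the codimension exceeds $n^2+m^2$ by $\Omega(n^2)$, beating the trivial class-size bound $q^{n^2+m^2}$. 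Your proposal contains the germ of the third regime but lacks the class-size estimate for almost-scalar elements, which is precisely the ingredient that closes the case your own analysis flags as problematic.
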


By Burnside's lemma (also known as Cauchy-Frobenius lemma, see~\cite[Page 42]{robinson96} and~\cite{Eick_O’Brien_1999}), Theorem~\ref{thm: bilinear maps} implies that the number of isomorphism classes of bilinear maps $\varphi: V\times V\to W$ is upper bounded by  $(q-1+\frac{1}{q^{\Omega(n^2)}})\cdot \frac{q^{n^2m}}{|\GL(n, q)|\cdot |\GL(m, q)|}$, and similarly for symmetric and alternating bilinear maps. 



Theorem~\ref{thm: bilinear maps} is motivated by, and therefore has applications to, finite group enumeration, matrix space random sampling, and finite commutative algebra enumeration. In the following, we explain these applications.

\subsection{Application 1: enumerating \texorpdfstring{$p$}{p}-groups of Frattini class \texorpdfstring{$2$}{2}}

Groups in this section are finite. 

Let $\twop$ be the set of $p$-groups of class $2$ and exponent $p$ (up to isomorphism). That is, a finite group $G$ is in $\twop$ if the commutator subgroup $[G, G]$ is contained in the centre $Z(G)$, and for any $g\in G$, $g^p=\id$. As every group of exponent $2$ is abelian, for $\twop$ the more interesting setting is when $p>2$. 

Let $\higp$ be the set of $p$-groups of Frattini class $2$ (up to isomorphism). That is, a finite group $G$ is in $\higp$ if its Frattini subgroup is central and elementary abelian. Or equivalently, $G$ satisfies that for any $g, h, k\in G$, $g^{p^2}=\id$, $[g, h]^p=\id$, and $[g, [h, k]]=\id$. 

Let $f(n)$ denote the number of (isomorphism classes of) groups of order $n$. Let $f_\twop(p^{\ell})$ (resp.\ $f_\higp(p^\ell)$) be the number of groups in $\twop$ (resp.\ $\higp$) of order $p^{\ell}$.

In 1960, Higman presented a pair of influential papers on enumerating $p$-groups \cite{Hig60,Hig60b}. To give a lower bound of $f(p^\ell)$, Higman showed that  $f_\higp(p^\ell)\geq p^{\frac{2}{27}(\ell^3-6\ell^2)}$~\cite{Hig60}. In \cite{Sim65}, Sims proved that $f(p^\ell)\leq p^{\frac{2}{27}\ell^3+O(\ell^{8/3})}$, with the error term improved from $O(\ell^{8/3})$ to $O(\ell^{5/2})$ by Newman and Seeley \cite{NS_enumeration}. Starting from Neumann \cite{Neu69}, several works studied $f(n)$ for general $n$ \cite{MN87,Holt89}, leading to the celebrated work of Pyber~\cite{Pyb93}, who proved that $f(n)\leq n^{\frac{2}{27}\mu(n)^2+O(\mu(n)^{5/3})}$, where $\mu(n)$ denotes the largest exponent in the prime decomposition of $n$. These works and more are presented in the monograph by Blackburn, Neumann, and Venkataraman \cite{BNV07}.


From the above brief account, we see that $p$-groups of Frattini class $2$, and $p$-groups of class $2$ and exponent $p$, are important group classes in group enumeration. Indeed, the constant $\frac{2}{27}$ in the above results can be traced back to Higman's bounds on $f_\higp$. From the group variety viewpoint, $\twop$ and $\higp$ serve as the minimal non-abelian $p$-group varieties for $p>2$ and $p=2$, respectively, in the sense that any non-abelian $p$-group variety contains them \cite[Theorem 19.1]{BNV07}. The abundance of groups in $\twop$ and $\higp$ hints at the difficulty of devising isomorphism testing algorithms for these groups. Indeed, they are generally viewed as bottlenecks for finite group isomorphism \cite{OBr93,CH03,BCGQ11}, with notable progress only in the past few years \cite{LQ17,BLQW20,Sun23,Frattini}.

The following result is due to Higman \cite{Hig60}, as worked out in detail in \cite{Bla92,BNV07}.

\begin{theorem}[{\cite{Hig60}, see \cite[Chapter 2.1]{Bla92} and \cite[Theorems 19.2 and 19.3]{BNV07}}]\label{thm:prior}
	Let $f_\twop$ and $f_\higp$ be as above. Then we have 
	$$
	p^{\frac{2}{27}\ell^3-\frac{2}{3}\ell^2}\leq f_{\twop}(p^\ell)\leq \ell\cdot 
	p^{\frac{2}{27}\ell^3-\frac{2}{9}\ell^2+\frac{49}{72}\ell}~\text{for}~p\geq 3,
	$$
	and 
	$$
	p^{\frac{2}{27}\ell^3-\frac{4}{9}\ell^2}\leq f_{\higp}(p^\ell)\leq \ell\cdot 
	p^{\frac{2}{27}\ell^3+\frac{11}{24}\ell}.
	$$
\end{theorem}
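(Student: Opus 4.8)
The plan is to reduce both enumeration problems to counting orbits of a general linear group on a Grassmannian, via the classical correspondence between $p$-groups of Frattini class $2$ and quotients of relatively free groups. Fix $d\in\N$ and let $F_d$ be the relatively free group on $d$ generators in the variety $\twop$ (respectively $\higp$). Then $F_d/\Phi(F_d)\cong\F_p^d$, the subgroup $\Phi(F_d)$ is central and elementary abelian, and the natural map $\Aut(F_d)\to\GL_d(\F_p)$ is surjective with kernel acting trivially on $\Phi(F_d)$. Hence a group $G$ in the variety of order $p^\ell$ that needs exactly $d$ generators is the same datum as a quotient $F_d/N$ with $N\le\Phi(F_d)$ an $\F_p$-subspace of dimension $\dim\Phi(F_d)-(\ell-d)$, and $F_d/N\cong F_d/N'$ precisely when $N$ and $N'$ lie in the same $\GL_d(\F_p)$-orbit under the induced action on $\Phi(F_d)$. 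For $\twop$ with $p\ge 3$ one has $\Phi(F_d)=[F_d,F_d]\cong\Lambda^2\F_p^d$, of dimension $\binom{d}{2}$; for $\higp$ the $p$-power map contributes one further copy of $\F_p^d$, so $\dim\Phi(F_d)=\binom{d}{2}+d$ (for $p$ odd this splitting follows from the class-$2$ identity $(gh)^p=g^ph^p[h,g]^{\binom{p}{2}}$ together with $p\mid\binom{p}{2}$; for $p=2$ only the dimension count, which is unchanged, is used). Writing $r=\ell-d$ and $N_d=\dim\Phi(F_d)$, and letting $a(d,r)$ (respectively $b(d,r)$) be the number of $\GL_d(\F_p)$-orbits on the set of codimension-$r$ subspaces of an $N_d$-dimensional $\F_p$-space, we obtain $f_{\twop}(p^\ell)=\sum_{d+r=\ell}a(d,r)$ and $f_{\higp}(p^\ell)=\sum_{d+r=\ell}b(d,r)$, each summand being $0$ unless $0\le r\le N_d$.

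For the upper bounds, bound every orbit count by the number of points of the Grassmannian: $a(d,r),b(d,r)\le\gbinom{N_d}{r}{p}$, and use the elementary estimate $\gbinom{N}{r}{p}<p^{r(N-r)}\prod_{i\ge 1}(1-p^{-i})^{-1}<4\,p^{r(N-r)}$. Since there are at most $\ell+1$ admissible values of $d$, it then suffices to maximise the exponent $r(N_d-r)$ over $d+r=\ell$. For $\twop$ this exponent equals $\tfrac{1}{2}(\ell-d)(d^2+d-2\ell)$; a short calculus computation places the maximiser at $d=\tfrac{2}{3}\ell+O(1)$, $r=\tfrac{1}{3}\ell+O(1)$, with value $\tfrac{2}{27}\ell^3-\tfrac{2}{9}\ell^2+O(\ell)$. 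For $\higp$ the exponent acquires an extra term $dr$, whose value $\tfrac{2}{9}\ell^2+O(\ell)$ at the same optimum exactly cancels the $-\tfrac{2}{9}\ell^2$, leaving value $\tfrac{2}{27}\ell^3+O(\ell)$. Evaluating the cubic at the integer nearest the real optimum and absorbing the $4(\ell+1)$ prefactor into the exponent yields $f_{\twop}(p^\ell)\le\ell\cdot p^{\frac{2}{27}\ell^3-\frac{2}{9}\ell^2+\frac{49}{72}\ell}$ and $f_{\higp}(p^\ell)\le\ell\cdot p^{\frac{2}{27}\ell^3+\frac{11}{24}\ell}$.

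For the lower bounds, use the complementary trivial estimate: the $\gbinom{N_d}{r}{p}\ge p^{r(N_d-r)}$ points of the Grassmannian are partitioned into orbits each of size at most $|\GL_d(\F_p)|<p^{d^2}$, so $a(d,r),b(d,r)\ge p^{r(N_d-r)-d^2}$. Retaining the single summand with $d$ the integer nearest $\tfrac{2}{3}\ell$ and $r=\ell-d$, one finds $r\bigl(\binom{d}{2}-r\bigr)-d^2=\tfrac{2}{27}\ell^3-\tfrac{2}{3}\ell^2$ for $\twop$ and $r\bigl(\binom{d}{2}+d-r\bigr)-d^2=\tfrac{2}{27}\ell^3-\tfrac{4}{9}\ell^2$ for $\higp$ (exactly when $3\mid\ell$; otherwise the admissible choice closest to the optimum affects the bound only in lower-order terms), which are the two claimed lower bounds.

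The two steps that demand genuine care are the structural reduction and the fine optimisation. The reduction rests on identifying the $\GL_d(\F_p)$-module structure of $\Phi(F_d)$ --- or, for $\higp$, at least its dimension --- together with the cited properties of $\Aut(F_d)$; this is exactly where $p\ge 3$ is needed for $\twop$ (exponent $2$ forces abelian, so $f_{\twop}(2^\ell)=1$) and where the behaviour of the $p$-power map must be controlled for $\higp$. The optimisation is routine calculus, but extracting the precise coefficients $\tfrac{49}{72}$ and $\tfrac{11}{24}$, rather than merely $O(\ell)$, requires a third-order expansion of the cubic at the integer maximiser, a small case analysis on $\ell\bmod 3$, and careful tracking of the $\gbinom{N}{r}{p}\le 4p^{r(N-r)}$ slack and the $(\ell+1)$-many-terms factor; this bookkeeping is the part most prone to error. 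Finally, I remark that the $p^{\Theta(\ell^2)}$ gap separating these upper and lower bounds is precisely what Theorem~\ref{thm: bilinear maps} removes: it shows the orbit count is essentially $\gbinom{N_d}{r}{p}/|\GL_d(\F_p)|$ --- equivalently, that almost all of these maps have only trivial automorphisms --- thereby pinning the count down near the lower end.
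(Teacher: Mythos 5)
This theorem is not proved in the paper at all---it is quoted from Higman and from Blackburn--Neumann--Venkataraman---and your argument is essentially the standard proof given in those cited sources: the correspondence between groups in $\twop$ (resp.\ $\higp$) of order $p^\ell$ needing $d$ generators and $\GL_d(\F_p)$-orbits of codimension-$(\ell-d)$ subspaces of $\Phi(F_d)$ with $\dim\Phi(F_d)=\binom{d}{2}$ (resp.\ $\binom{d}{2}+d$), the upper bound via $\gbinom{N_d}{r}{p}\leq O(1)\cdot p^{r(N_d-r)}$ summed over $\leq\ell$ values of $d$, the lower bound via dividing by $|\GL_d(\F_p)|\leq p^{d^2}$, and optimisation at $d\approx\frac{2}{3}\ell$, which is exactly the per-$(n,m)$ bound the paper itself imports in Section~\ref{sec:aux}. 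The substance is correct; the only pieces you assert rather than derive are the exact linear coefficients $\frac{49}{72}$ and $\frac{11}{24}$ (pure bookkeeping, as you note) and the no-slack lower bound when $3\nmid\ell$, which does go through since at $d=\frac{2\ell-\delta}{3}$ the correction to $\frac{2}{27}\ell^3-\frac{2}{3}\ell^2$ is $\frac{\delta(3-\delta)}{18}\ell+O(1)$ with nonnegative linear term.
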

Since the coefficient of the cubic term on the exponent is tight 
(i.e. $\frac{2}{27}$), a natural question is to examine the coefficients of the 
quadratic term. This was posed as open problems in \cite{BNV07}.
\begin{problem}
	\label{prob:main}
	\begin{enumerate}
		\item[(1)] (Part of \cite[Question 22.8]{BNV07}) Suppose $$f_\twop(p^\ell)=p^{\frac{2}{27}\ell^3+E_\twop(\ell)}.$$ Does $\lim_{\ell\to\infty}E_\twop(\ell)/\ell^2$ exist? If so, what is this limit? 
		
		The same question was asked for $\higp$ instead of $\twop$ in the above. 
		\item[(2)] (\cite[Question 22.11]{BNV07}) Is it true that 
		$$f_\higp(p^\ell)/f_\twop(p^\ell)\geq p^{\frac{2}{9}\ell^2-O(\ell)}?$$
		Could it perhaps even be true that  
		$f_\twop(p^\ell)=p^{\frac{2}{27}\ell^3-\frac{2}{9}\ell^2+O(\ell)},$
		and
		$f_\higp(p^\ell)	=p^{\frac{2}{27}\ell^3+O(\ell)}?$
	\end{enumerate}
\end{problem}

In this paper, we answer Problem~\ref{prob:main} in the following theorem; its proof is in Section~\ref{subsec:group-enum}.
\begin{theorem}\label{thm:group-enum}
	Let $f_\twop$ and $f_\higp$ be as above. Then we have	$$p^{\frac{2}{27}\ell^3-\frac{2}{3}\ell^2}\leq f_\twop(p^\ell)\leq p^{\frac{2}{27}\ell^3-\frac{2}{3}\ell^2+O(\ell)}~\text{for}~p\geq 3,$$
	and 
	$$p^{\frac{2}{27}\ell^3-\frac{4}{9}\ell^2}\leq f_\higp(p^\ell)\leq p^{\frac{2}{27}\ell^3-\frac{4}{9}\ell^2+O(\ell)}.$$
\end{theorem}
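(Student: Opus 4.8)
The plan is to reduce the group-counting statement to the bilinear-map count from Theorem~\ref{thm: bilinear maps} via the classical dictionary between $p$-groups of class $2$ and exponent $p$ (respectively, Frattini class $2$) and alternating (respectively, general) bilinear maps, and then balance the contribution of the two natural "pieces" of such a group — the Frattini quotient $V$ and the Frattini subgroup $W$ — by optimizing over the split $\ell = n + m$. Concretely, a group $G \in \twop$ of order $p^\ell$ with $|G/[G,G]G^p| = p^n$ and $|[G,G]| = p^m$ corresponds, up to the choice of $n, m$ with $n + m = \ell$, to an alternating bilinear map $\varphi : V \times V \to W$ with $V \cong \F_q^n$, $W \cong \F_q^m$ that is moreover \emph{surjective} (i.e. $W$ is spanned by the image), and two such groups are isomorphic iff the maps are isomorphic under the $\GL(V) \times \GL(W)$ action above. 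So $f_\twop(p^\ell) = \sum_{n+m=\ell}\bigl(\text{number of iso.\ classes of surjective alternating }\varphi:\F_p^n\times\F_p^n\to\F_p^m\bigr)$, and analogously for $\higp$ with general bilinear maps (the Frattini-class-$2$ case additionally records an $\F_p$-linear map $V \to W$ coming from the $p$-th power map, but this contributes only a $p^{O(nm)} = p^{O(\ell^2)}$ factor and, crucially, does not change the leading terms — I would handle it by a direct count of such maps up to the relevant equivalence).

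The main quantitative step is then: for each split $n + m = \ell$, bound the number of iso.\ classes of (surjective) alternating bilinear maps by the naive orbit-counting estimate
\[
\#\{\text{iso.\ classes}\} \;\le\; h_\Lambda(n,m,q)\cdot \frac{q^{\binom{n}{2}m}}{|\GL(n,q)|\cdot|\GL(m,q)|},
\]
which is exactly what Theorem~\ref{thm: bilinear maps} plus Burnside gives, provided $m = \lceil C n + R\rceil$ for fixed rationals $C, R$; and then to choose which $(n,m)$ dominates the sum. Since $\log_p$ of the count is $\binom{n}{2}m - \log_p|\GL(n,q)| - \log_p|\GL(m,q)| + O(1)$ and $\log_p|\GL(k,q)| = k^2 + O(k)$ (actually $\binom{k}{2} + \text{l.o.t.}$ for the $p$-part... more precisely $\nu_p(|\GL(k,q)|) = \binom{k}{2}$ when $q = p$), the exponent is, to leading and subleading order, $\frac{1}{2}n^2 m - n^2 - m^2 + (\text{lower order})$. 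Maximizing $\frac12 n^2 m$ subject to $n + m = \ell$ gives $n = \frac{2}{3}\ell$, $m = \frac{1}{3}\ell$, yielding the cubic term $\frac12\cdot\frac49\ell^2\cdot\frac13\ell = \frac{2}{27}\ell^3$; substituting into the quadratic part $-n^2 - m^2 = -\frac49\ell^2 - \frac19\ell^2 = -\frac59\ell^2$ and then correcting for the exponent-$p$ constraint (which removes a $\binom{n}{2}$-dimensional worth of maps, shifting the alternating count's "$\binom n2 m$" versus the general "$n^2 m$") produces $-\frac23\ell^2$ for $\twop$ and $-\frac49\ell^2$ for $\higp$; all remaining discrepancies are $O(\ell)$ and absorbed into the error term. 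The fact that the dominant split has $m$ a fixed rational multiple of $n$ is exactly why Theorem~\ref{thm: bilinear maps}'s hypothesis $m = \lceil Cn + R\rceil$ is the right shape, and the surjectivity restriction only decreases the count so the upper bound is unaffected.

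For the lower bounds I would simply cite the existing ones in Theorem~\ref{thm:prior}, which already match: $p^{\frac{2}{27}\ell^3 - \frac23\ell^2}$ and $p^{\frac{2}{27}\ell^3 - \frac49\ell^2}$; so no new argument is needed there, and the content of Theorem~\ref{thm:group-enum} is precisely closing the gap between Higman's upper bounds (with coefficients $-\frac29$ and $+\frac{11}{24}$... i.e.\ essentially $0$) and these lower bounds.

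I expect the main obstacle to be \textbf{two bookkeeping issues rather than a conceptual one}. First, one must verify that the dictionary is tight enough: a group in $\twop$ of order $p^\ell$ is \emph{not} freely determined by an arbitrary pair $(n,m)$ — the map must be surjective and $n$ is the \emph{minimal} number of generators — so summing over all $n + m = \ell$ overcounts, but only by a factor of $\ell$ (the number of splits), which is $p^{O(\log\ell)} \subseteq p^{O(\ell)}$ and harmless; still this needs to be stated carefully so the claimed $O(\ell)$ error is honest. Second, and more delicate, one must track the $p$-part of $|\GL(k,q)|$ versus its full order: since here $q = p$, we have $|\GL(k,p)| = p^{\binom k2}\prod_{i=1}^k(p^i - 1)$, and it is the full order (not just the $p$-part) that appears in the Burnside denominator, contributing an extra $\sum_{i=1}^k\log_p(p^i-1) = \binom{k+1}{2} + O(k) = \binom k2 + O(k)$... wait, more carefully $\log_p\prod(p^i-1) = \sum i + O(1)\cdot k = \binom{k+1}{2} - O(k/(p-1))$, so altogether $\log_p|\GL(k,p)| = \binom k2 + \binom{k+1}2 + O(k) = k^2 + O(k)$; plugging $k = n$ and $k = m$ and combining with $\binom n2 m$ is where the constants $\frac23$ and $\frac49$ must be pinned down exactly, and any slip there changes the answer. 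The rest — checking that Theorem~\ref{thm: bilinear maps} applies with the specific $C = 2$, $R = 0$ (after clearing denominators, $m = \lceil n/2 \rceil$ up to the split being optimized, or more precisely one lets $n, m$ range and notes the bound holds uniformly with the stated $q^{\Omega(n^2)}$ error) and assembling the pieces — is routine.
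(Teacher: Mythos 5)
Your overall strategy for $f_\twop$ --- Higman's dictionary, Burnside via Theorem~\ref{thm: bilinear maps}, optimising the split $n+m=\ell$ near $n=\frac{2}{3}\ell$, and citing the known lower bounds --- is essentially the paper's route, and your exponent $\binom{n}{2}m-n^2-m^2+O(\ell)$ does produce $-\frac{2}{3}\ell^2$. But there are genuine gaps. First, your treatment of $\higp$ is wrong as stated: you identify Frattini-class-$2$ groups with \emph{general} bilinear maps (exponent $n^2m$), which would give cubic term $\frac{4}{27}\ell^3$ rather than $\frac{2}{27}\ell^3$; and you dismiss the $p$-th power datum as a $p^{O(nm)}=p^{O(\ell^2)}$ factor that ``does not change the leading terms'' --- but the entire content of the theorem is the coefficient of $\ell^2$, and that datum is precisely what shifts $-\frac{2}{3}$ to $-\frac{4}{9}$ (at the optimal split $nm=\frac{2}{9}\ell^2$), so it cannot be absorbed as an unanalysed $p^{O(\ell^2)}$ fudge. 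The correct object is the $\GL(V)$-action on $V\wedge V\oplus V$ (Proposition~\ref{prop:higp_lin}), of dimension $\binom{n}{2}+n$, and one needs an honest orbit-count bound for $(V\wedge V\oplus V)\otimes W$ analogous to Theorem~\ref{thm: tensors}; the paper proves this as Theorem~\ref{thm: tensors_frattini}, via Observation~\ref{obs:frattini} showing that fixed vectors project to fixed vectors of the $V\wedge V\otimes W$ part, so the extra factor enters only as $q^{nm}$ in the numerator. Second, the theorem asserts the $\higp$ bound for all $p$, including $p=2$, where the dictionary you rely on breaks down: since $(x_ix_j)^2=x_i^2x_j^2[x_i,x_j]$, the span of the squares is not invariant and $\Phi(F_{\Phi\text{-}2,2,n})$ is not $V\wedge V\oplus V$ as a $\GL(n,2)$-module; the paper needs a separate dual-space argument (Fact~\ref{fact:dual}) for this case, about which your proposal says nothing.

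A further, smaller gap: applying Theorem~\ref{thm: bilinear maps} ``uniformly as $n,m$ range'' over all splits $n+m=\ell$ is not licensed by its statement, which fixes the constants $C,R$ (and your $C=2$, $R=0$ is in any case not the relevant ratio, since at the optimum $m\approx n/2$). One must first discard the extreme splits, e.g.\ $m=O(1)$ or $n$ close to $\ell$, using cruder bounds, so that the maximising split lies in a range where $m/n$ is bounded above and below; the paper does exactly this with Higman's bounds in Proposition~\ref{prop:max_range}, restricting to $\frac{1}{2}\ell\leq n\leq\frac{5}{6}\ell$. The factor-$\ell$ overcount from summing over splits, which you do mention, is harmless but does not address this uniformity issue.
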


Theorem~\ref{thm:group-enum} answers Problem~\ref{prob:main} (1) as 
$$\lim_{\ell\to\infty}\frac{E_\twop(\ell)}{\ell^2}=-\frac{2}{3}~\text{for}~p\geq 3,~\lim_{\ell\to\infty}\frac{E_\higp(\ell)}{\ell^2}=-\frac{4}{9}$$
and answers Problem~\ref{prob:main} (2) as 
$$p^{\frac{2}{9}\ell^2-\Omega(\ell)}\leq \frac{f_\higp(p^\ell)}{f_\twop(p^\ell)}\leq p^{\frac{2}{9}\ell^2+O(\ell)}.$$

\subsection{Application 2: random matrix spaces with trivial symmetry}

A classic question in random graph theory is the enumeration of unlabelled graphs with $n$ vertices and $m$ edges, that is the number of orbits of $n$-vertex $m$-edge graphs under the action of the symmetric group of degree $n$. This was studied by P\'olya (see \cite{FU56}) and Oberschelp \cite{Ober67}. In \cite{Wri71}, Wright proved the following major result. 
\begin{theorem}[{\cite{Wri71}}]\label{thm:wright}
	For $n, m\in \N$, let $g(n, m)$ be the number of unlabelled graphs with $n$ vertices and $m$ edges. Then 
	$$g(n, m)\sim \frac{\binom{\binom{n}{2}}{m}}{n!}$$
	as $n\to \infty$ if and only if 
	$$\frac{1}{2}n\log n+\omega(n)n\leq m\leq \binom{n}{2}-\frac{1}{2}n\log n-\omega(n)n,$$ where $\omega(n)$ is a function tending to $\infty$. 
\end{theorem}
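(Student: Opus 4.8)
\emph{Proof proposal.}
The plan is to apply Burnside's lemma (as invoked above for bilinear maps) to the action of $S_n$ on the set of $N:=\binom{n}{2}$ potential edges on the vertex set $[n]$. For every $m$ this gives
\[
g(n,m)=\frac{1}{n!}\sum_{\sigma\in S_n}\mathrm{fix}_m(\sigma),
\]
where $\mathrm{fix}_m(\sigma)$ is the number of $m$-edge graphs on $[n]$ invariant under $\sigma$. The identity contributes exactly $\binom{N}{m}$, so $g(n,m)\ge\binom{N}{m}/n!$ unconditionally, and the assertion $g(n,m)\sim\binom{N}{m}/n!$ is \emph{equivalent} to $\sum_{\sigma\neq\id}\mathrm{fix}_m(\sigma)=o\bigl(\binom{N}{m}\bigr)$. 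Since complementation of edge sets gives $g(n,m)=g(n,N-m)$ and $\binom{N}{m}=\binom{N}{N-m}$, it suffices to treat $m\le N/2$ and show the error bound holds exactly for $m\ge\tfrac12 n\log n+\omega(n)\,n$.

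For the sufficiency direction I would show that $\sum_{\sigma\ne\id}\mathrm{fix}_m(\sigma)$ is governed by the $\binom n2$ transpositions. A transposition $\tau=(a\,b)$ splits the $N$ pairs into $n-2$ orbits of size $2$ (namely $\{\{a,c\},\{b,c\}\}$ for $c\ne a,b$) and $N':=N-2(n-2)$ fixed pairs, whence
\[
\mathrm{fix}_m(\tau)=\sum_{j\ge 0}\binom{n-2}{j}\binom{N'}{m-2j}.
\]
Near the threshold, where $m=O(n\log n)=o(n^{3/2})$, the ratio of consecutive summands is $\tfrac{4m^2}{jn^3}(1+o(1))<1$, so this sum equals $(1+o(1))\binom{N'}{m}$; combined with $\binom{N'}{m}/\binom{N}{m}=(1-2(n-2)/N)^{m}(1+o(1))=e^{-(4+o(1))m/n}$ this yields
\[
\binom{n}{2}\,\mathrm{fix}_m(\tau)\Big/\binom{N}{m}=n^{\,2-(4+o(1))m/(n\log n)},
\]
which tends to $0$ precisely when $m\ge\tfrac12 n\log n+\omega(n)\,n$. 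For a general $\sigma\neq\id$ of support size $s$, the number $c(\sigma)$ of $\sigma$-orbits on pairs satisfies $c(\sigma)\le N-\Omega(sn)$, since the pairs moved by $\sigma$ number $\Omega(sn)$ and lie in orbits of size $\ge 2$; because invariant graphs are unions of orbits, this already gives the crude bound $\mathrm{fix}_m(\sigma)\le 2^{c(\sigma)}\le 2^{\,N-\Omega(sn)}$, and since there are at most $n^{s}$ permutations of support size $s$, summing the resulting geometric series over $s\ge 2$ shows the contribution of all non-transpositions is $2^{-\Omega(n)}\binom{N}{m}$ in the bulk regime $m=\Theta(N)$. A matching finer estimate — the same binomial-sum analysis applied to orbits of size $\ge 2$ — handles the remaining regime $m=o(n^{3/2})$, and patching the two regimes gives $\sum_{\sigma\neq\id}\mathrm{fix}_m(\sigma)=o\bigl(\binom N m\bigr)$ throughout the claimed range.

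For the necessity direction I argue contrapositively. If $m\le N/2$ but $m\le\tfrac12 n\log n+Cn$ along a subsequence for some constant $C$, then the $j=0$ term of $\mathrm{fix}_m(\tau)$ alone gives $\binom n2\,\mathrm{fix}_m(\tau)\ge\Omega(1)\cdot\binom N m$ on that subsequence; a short inclusion--exclusion over the transpositions — using the support-size bound above to see that graphs fixed by two or more transpositions contribute a lower-order term — yields $\sum_{\sigma\ne\id}\mathrm{fix}_m(\sigma)=\Omega\bigl(\binom N m\bigr)$, so $g(n,m)\ge(1+\Omega(1))\binom N m/n!$ and the asymptotic equivalence fails; the case $m>N/2$ follows by complementation. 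The main obstacle is obtaining estimates of $\mathrm{fix}_m(\sigma)$ that are uniform over cycle types yet sharp enough near $m\approx\tfrac12 n\log n$ to pin down the constant $\tfrac12$: concretely, showing that for a transposition the ``no cross-edge'' ($j=0$) contribution is the dominant one, and that nontrivial support of size $s$ is penalised by a factor $e^{-\Omega(sm/n)}$ strong enough to survive the union bound over all $n!$ permutations.
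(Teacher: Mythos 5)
First, note that the paper does not prove this statement at all: Theorem~\ref{thm:wright} is quoted verbatim from Wright~\cite{Wri71} as background, so there is no internal proof to compare against. Your proposal follows what is in fact the classical route (Burnside's lemma, with the identity giving $\binom{N}{m}$, transpositions determining the threshold, and complementation reducing to $m\le N/2$), and the parts you work out are sound: the orbit count for a transposition, the ratio test showing the $j=0$ term dominates when $m=O(n\log n)$, the estimate $\binom{N'}{m}/\binom{N}{m}=e^{-(4+o(1))m/n}$, and the necessity direction (where, incidentally, no inclusion--exclusion is needed: Burnside's sum has nonnegative terms, so the transposition terms alone already give $g(n,m)\ge(1+\Omega(1))\binom{N}{m}/n!$ when $m\le\tfrac12 n\log n+Cn$).

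The genuine gap is in the sufficiency direction, exactly at the step you flag as the ``main obstacle''. The crude bound $\mathrm{fix}_m(\sigma)\le 2^{c(\sigma)}\le 2^{N-\Omega(sn)}$ ignores the edge count $m$, and it does \emph{not} beat $\binom{N}{m}$ for permutations of small support in the bulk: if $m=cN$ with $c<\tfrac12$ fixed, then $\binom{N}{m}\approx 2^{H(c)N}$ with $H(c)<1$, so for a $3$-cycle ($s=3$) the bound $2^{N-\Omega(n)}$ exceeds $\binom{N}{m}$ by a factor $2^{(1-H(c))N-O(n)}=2^{\Theta(n^2)}$; summed over the $\Theta(n^3)$ three-cycles this is astronomically larger than the target, so the claimed geometric series over $s\ge 2$ diverges in exactly the regime where you invoke it. The crude bound only helps when $s=\Omega(n)$, while small-support permutations must be handled by the edge-density-aware binomial analysis throughout. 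Moreover your two regimes, $m=o(n^{3/2})$ and $m=\Theta(N)$, leave the intermediate range $n^{3/2}\lesssim m=o(n^2)$ uncovered, and ``patching'' is asserted rather than argued. What is missing is precisely the heart of Wright's proof: an estimate of $\mathrm{fix}_m(\sigma)$, uniform over all cycle types and over the whole range $\tfrac12 n\log n+\omega(n)n\le m\le N/2$, showing that support of size $s$ incurs a penalty like $e^{-\Omega(smn^{-1})}$ (or $n^{-\Omega(s)}$ at the threshold) strong enough to survive the union bound over the at most $n^{s}$ permutations of that support. Until that uniform estimate is supplied, the sufficiency half of the theorem is not established.
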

This result is useful for translating results from random labelled graphs to random unlabelled graphs \cite[Chapter 9]{Bol01}. It also implies that for $m$ in that range, most graphs are asymmetric, i.e. have the trivial automorphism group, a classical topic in random graph theory \cite{ER63,Bol01}.



Some works indicate that alternating matrix spaces may be studied as a linear algebraic analogue of graphs \cite{Lov89,BCGQS21,LQ17,LQ20,Qia21,Qia23,LQWWZ,Qia24}. In particular, over finite fields, automorphism groups of random alternating matrix spaces were studied in \cite{LQ17,BLQW20}, and threshold phenomena for monotone properties of random matrix spaces were shown in \cite{Ros20}.

Theorem~\ref{thm: bilinear maps} leads to the following enumeration bounds of isomorphism classes of matrix spaces. We shall focus on $\Lambda(n, q)$, the linear space of $n\times n$ alternating matrices over $\F_q$, though analoguous statements are also valid for linear spaces of general or symmetric matrices. Let $\Gr(U, m)$ be the Grassmannian of $m$-dimensional subspaces of a vector space $U$. The congruence action of $\GL(n, q)$ on $\Lambda(n, q)$ is defined as $P\in\GL(n, q)$ sending $A\in\Lambda(n, q)$ to $\tr{P}AP$. Naturally it induces an action on $\Gr(\Lambda(n, q), m)$, that is, for $\cA\in \Gr(\Lambda(n, q), m)$, $P\in\GL(n, q)$ sends $\cA$ to $\tr{P}\cA P:=\{\tr{P}AP\mid A\in\cA\}$. 

Let $g(\Lambda(n,q), m)$ be the number of orbits of the congruence action of $\GL(n, q)$ on $\Gr(\Lambda(n, q))$. The following is a relatively straightforward consequence of Theorem~\ref{thm: bilinear maps}; its proof is in Section~\ref{subsec:cor-space}. 


\begin{theorem}\label{thm:space}
For $C, R\in \Q$ with $C>0$ and $n\in \N$, let $m=m(n)=\lceil C\cdot n+R\rceil$. 
	Let $K=q^{m^2}/|\GL(m,q)|$. Then 
	$$
	g(\Lambda(n,q), m) \sim K\cdot \frac{\binom{\binom{n}{2}}{m}_q}{|\PGL(n, q)|}
	$$
	as $n\to \infty$. Furthermore, $g(\Lambda(n,q), m)\sim \frac{\binom{\binom{n}{2}}{m}_q}{|\PGL(n, q)|}$ as $n, q\to \infty$.
\end{theorem}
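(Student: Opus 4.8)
The plan is to deduce Theorem~\ref{thm:space} from Theorem~\ref{thm: bilinear maps} via Burnside's lemma, exactly paralleling how Wright's asymptotic (Theorem~\ref{thm:wright}) is used in random graph theory. First I would set up the correspondence between $m$-dimensional subspaces of $\Lambda(n,q)$ and alternating bilinear maps $\varphi: V\times V\to W$ with $\dim V = n$, $\dim W = m$: a subspace $\cA \in \Gr(\Lambda(n,q),m)$ together with a choice of ordered basis determines such a $\varphi$ (the $m$ coordinate functions are the chosen basis matrices), and two such $\varphi$ are related by a $\GL(W)$-change only iff they span the same subspace. Thus the number of $\varphi$'s with a given span $\cA$ is exactly $|\GL(m,q)|$ divided by nothing — more precisely the set of all alternating $\varphi$ surjecting onto $W$, modulo the $\GL(W)$-action, is in bijection with $\Gr(\Lambda(n,q),m)$, and the $\GL(V)$-action on the former descends to the congruence action on the latter. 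A technical point here: one must restrict to $\varphi$ whose image is all of $W$ (equivalently, subspaces of dimension exactly $m$, not less); the number of alternating $\varphi$ that are \emph{not} surjective is a $q^{-\Omega(n)}$ (indeed $q^{-\Omega(n^2)}$ after accounting for dimensions) fraction of all alternating $\varphi:V\times V\to W$, so this restriction costs only a negligible multiplicative error, which I would quantify and absorb.

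Next I would write down Burnside's lemma on both sides. The number of $\GL(V)$-orbits on the set $S$ of surjective alternating $\varphi$ equals $\frac{1}{|\GL(n,q)|}\sum_{P\in\GL(n,q)} |\Fix(P)|$; dividing through by the $\GL(W)$-orbit size $|\GL(m,q)|$ (which is constant on $S$ because surjectivity makes the $\GL(W)$-action free up to scalars — here one has to be slightly careful about the scalar ambiguity, contributing the $\PGL$ versus $\GL$ and the factor matching $h$'s normalisation of ``trivial'') gives $g(\Lambda(n,q),m)$ up to the negligible non-surjective correction. On the other hand, $h_\Lambda(n,m,q) = \frac{\sum_\varphi |\Aut(\varphi)|}{|\{\varphi\}|}$, and by the orbit-counting identity $\sum_\varphi |\Aut(\varphi)| = |\GL(n,q)|\cdot|\GL(m,q)|\cdot(\#\text{orbits})$, so the number of orbits of $(\GL(V)\times\GL(W))$ on all alternating $\varphi$ is $h_\Lambda(n,m,q)\cdot \frac{|\Lambda(n,q)\text{-maps count}|}{|\GL(n,q)||\GL(m,q)|} = h_\Lambda(n,m,q)\cdot \frac{q^{\binom{n}{2}m}}{|\GL(n,q)||\GL(m,q)|}$. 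Since $\binom{\binom{n}{2}}{m}_q = \frac{|\{\text{surjective }\varphi\}|}{|\GL(m,q)|}$ up to $(1+q^{-\Omega(n)})$, and $|\PGL(n,q)| = |\GL(n,q)|/(q-1)$, assembling these pieces and invoking the bound $h_\Lambda(n,m,q) \le q-1 + q^{-\Omega(n^2)}$ from Theorem~\ref{thm: bilinear maps} yields $g(\Lambda(n,q),m) \le (1+q^{-\Omega(n^2)})\cdot K \cdot \frac{\binom{\binom{n}{2}}{m}_q}{|\PGL(n,q)|}$, with $K = q^{m^2}/|\GL(m,q)|$ appearing precisely because we must multiply back the $\GL(W)$ versus $q^{m^2}$ discrepancy — let me re-examine: $K = q^{m^2}/|\GL(m,q)| = \prod_{i=1}^m (1-q^{-i})^{-1} \to$ a constant, so it is the ``correction for choosing a surjection rather than an ordered basis'' $=$ roughly the probability that $m$ random vectors in $\F_q^m$ are independent, inverted.

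For the matching \emph{lower} bound on $g(\Lambda(n,q),m)$ — needed for the ``$\sim$'' rather than just ``$\le$'' — I would note that trivially $\sum_{P}|\Fix(P)| \ge |\Fix(\id)| = |S|$, which already gives $g \ge \frac{|S|}{|\GL(n,q)|\cdot|\GL(m,q)|/(q-1)} = (1-q^{-\Omega(n)}) K \frac{\binom{\binom{n}{2}}{m}_q}{|\PGL(n,q)|}$; combined with the upper bound this pins down the asymptotic. The final claim that $g(\Lambda(n,q),m)\sim \frac{\binom{\binom{n}{2}}{m}_q}{|\PGL(n,q)|}$ as $n,q\to\infty$ follows because $K = \prod_{i=1}^m(1-q^{-i})^{-1} \to 1$ as $q\to\infty$ uniformly in $m$. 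The main obstacle I anticipate is purely bookkeeping rather than conceptual: getting the scalar/$\PGL$ normalisation to line up exactly with the definition of ``trivial automorphism group'' used in $h_\Lambda$ (the $\{(\alpha\id_V,\beta\id_W): \alpha^2\beta=1\}$ subgroup), and confirming that the non-surjective $\varphi$ truly contribute a $q^{-\Omega(n^2)}$ fraction (one needs a crude count: subspaces of $\Lambda(n,q)$ of dimension $< m$ number at most $m\cdot\binom{\binom{n}{2}}{m-1}_q$, which is a $q^{-\Omega(n^2)}$ fraction of $\binom{\binom{n}{2}}{m}_q$ since $\binom{n}{2}-m+1 = \Omega(n^2)$). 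Once these constants are tracked carefully, the argument is a direct transfer of Theorem~\ref{thm: bilinear maps} through Burnside's lemma.
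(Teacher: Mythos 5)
Your route is the same as the paper's: restrict to tensors $t\in V\wedge V\otimes W$ whose image $U_t$ has dimension exactly $m$, identify $\GL(V)\times\GL(W)$-orbits of these with congruence orbits of $\Gr(\Lambda(n,q),m)$, and feed Theorem~\ref{thm: bilinear maps}/Theorem~\ref{thm: tensors} through Burnside for the upper bound (this is exactly Theorem~\ref{thm:space-technical} and its proof). The upper-bound half of your argument, and the observation that non-surjective tensors form a $q^{-\Omega(n^2)}$ fraction, are fine.

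The genuine gap is in your lower bound, and it is a factor-$K$ bookkeeping error rather than a removable technicality. With $S$ the set of surjective alternating maps and $d=\binom{n}{2}$, the identity $|S|=\binom{\binom{n}{2}}{m}_q\cdot|\GL(m,q)|$ is \emph{exact} (you even state the ``exactly $|\GL(m,q)|$ per subspace'' fact earlier), so the scalar-pairs contribution gives $g(\Lambda(n,q),m)\geq \frac{(q-1)|S|}{|\GL(n,q)|\,|\GL(m,q)|}=\frac{\binom{\binom{n}{2}}{m}_q}{|\PGL(n,q)|}$, not $(1-q^{-\Omega(n)})\,K\cdot\frac{\binom{\binom{n}{2}}{m}_q}{|\PGL(n,q)|}$ as you write; the ``up to $1+q^{-\Omega(n)}$'' you invoke between $|S|/|\GL(m,q)|$ and the Gaussian binomial is an exact equality, and the quantity you silently dropped is precisely $K=q^{m^2}/|\GL(m,q)|$, which for fixed $q$ is bounded away from $1$. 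Moreover, if you track your own upper bound tightly, $q^{dm}/|\GL(m,q)|=K\,q^{m(d-m)}=(1+q^{-\Omega(n^2)})\binom{\binom{n}{2}}{m}_q$, so the Burnside upper bound is $(1+q^{-\Omega(n^2)})\frac{\binom{\binom{n}{2}}{m}_q}{|\PGL(n,q)|}$: your two bounds meet at $\frac{\binom{\binom{n}{2}}{m}_q}{|\PGL(n,q)|}$, i.e.\ the factor $K$ is already absorbed inside the Gaussian binomial, and the argument as proposed cannot produce a sandwich at $K\cdot\frac{\binom{\binom{n}{2}}{m}_q}{|\PGL(n,q)|}$. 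So the proposal does not establish the displayed statement with the prefactor $K$; what it proves (once the slip conflating $|\GL(m,q)|$ with $q^{m^2}$ is fixed) is the asymptotic $g(\Lambda(n,q),m)\sim \binom{\binom{n}{2}}{m}_q/|\PGL(n,q)|$, equivalently $\sim K\,q^{m(\binom{n}{2}-m)}/|\PGL(n,q)|$. Note that the paper itself only proves the upper bound $g\leq K(1+q^{-\Omega(n^2)})\frac{|\Gr(V\wedge V,m)|}{|\PGL(V)|}$ (where the $K$ enters through the lossy step $q^{m(d-m)}\leq\binom{\binom{n}{2}}{m}_q$ in Equation~\eqref{eq:gr2}) and asserts the limit without an explicit matching lower bound; the lower bound is exactly where the normalisation must be done carefully, and your computation papers over that point.
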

Note that compared to Theorem~\ref{thm:wright}, the range $m=\Theta(n)$ in Corollary~\ref{thm:space} is restrictive, and it is an interesting open problem to relax it. Meanwhile, we consider the action of the projective linear group instead of the general linear group, since we may omit the scalar action as they trivially induce automorphisms between matrix spaces.


Theorem~\ref{thm:space} has implications to sampling matrix spaces with trivial symmetry as follows. Define the automorphism group of $\cA\in\Gr(\Lambda(n, q),m)$ as $\Aut(\cA):=\{P\in\GL(n, q)\mid \tr{P}\cA P=\cA\}$. We say that $\Aut(\cA)$ is trivial if it just consists of non-zero scalar matrices. 

In \cite{LQ17,BLQW20}, the problem of estimating the probability of sampling matrix spaces with the trivial automorphism group was studied in connection with testing isomorphism of $p$-groups of class $2$ and exponent $p$. This question is also of interest in cryptography. Some digital signature schemes \cite{MEDSspecs,ALTEQspecs} are based on the assumed hardness of (variations of) testing congruence of matrix spaces. For these schemes, trivial automorphism groups are useful for showing security in the so-called quantum random oracle model \cite{BCDJNPQST24}.

In \cite{BLQW20}, it was shown that when $\Omega(1)\leq m\leq \binom{n}{2}-O(1)$, for all but $1/q^{\Omega(n^2)}$ fraction of $\cA\in\Gr(\Lambda(n, q), m)$, the order of $\Aut(\cA)$ is upper bounded by $q^{O(n+m)}$. However, that bound should be far from tight; it is expected that a random matrix space should have the trivial automorphism group as long as $n$, $m$, and $q$ are reasonably large constants. 

By Theorem~\ref{thm:space}, we can obtain the following corollary, and its proof is in Section \ref{subsec:cor-space}. 

\begin{corollary}\label{cor:fraction}
For $C, R\in \Q$ with $C>0$ and $n\in \N$, let $m=m(n)=\lceil C\cdot n+R\rceil$. Let $K=q^{m^2}/|\GL(m,q)|$.
    When $n$ is large enough, all but $\frac{1}{q^{\Omega(n^2)}}$ fraction of $m$-dimensional subspaces $\Lambda(n, q)$ have their automorphism group order upper bounded by $K\cdot (q-1)$.
\end{corollary}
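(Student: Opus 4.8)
The plan is to deduce Corollary~\ref{cor:fraction} from Theorem~\ref{thm:space} by a simple averaging (Markov-type) argument, after setting up the correspondence between subspaces $\cA\in\Gr(\Lambda(n,q),m)$ and their orbits under the congruence action of $\PGL(n,q)$. First I would recall that by the orbit-stabilizer theorem, for each $\cA$ the orbit of $\cA$ under $\PGL(n,q)$ has size $|\PGL(n,q)|/|\overline{\Aut}(\cA)|$, where $\overline{\Aut}(\cA)$ denotes the image of $\Aut(\cA)$ in $\PGL(n,q)$ (i.e. $\Aut(\cA)$ modulo the non-zero scalar matrices, which always act trivially). Summing over all subspaces and regrouping by orbits gives the identity
\[
\sum_{\cA\in\Gr(\Lambda(n,q),m)}\frac{1}{|\overline{\Aut}(\cA)|}=\frac{g(\Lambda(n,q),m)\cdot|\PGL(n,q)|}{\binom{\binom{n}{2}}{m}_q},
\]
which is exactly the Burnside/Cauchy--Frobenius bookkeeping specialized to this action. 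Equivalently, rewriting in terms of $|\Aut(\cA)|=(q-1)\cdot|\overline{\Aut}(\cA)|$, the average of $\tfrac{1}{|\Aut(\cA)|}$ over all $\cA$ equals $g(\Lambda(n,q),m)\cdot|\PGL(n,q)|\big/\big((q-1)\binom{\binom{n}{2}}{m}_q\big)$.

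Next I would plug in Theorem~\ref{thm:space}, which says $g(\Lambda(n,q),m)\sim K\cdot\binom{\binom{n}{2}}{m}_q/|\PGL(n,q)|$ as $n\to\infty$, where $K=q^{m^2}/|\GL(m,q)|$. Substituting into the displayed identity, the right-hand side becomes $\sim K/(q-1)$; more precisely it is $\tfrac{K}{q-1}\cdot(1+\varepsilon_n)$ where $\varepsilon_n\to 0$. The key point is to extract from Theorem~\ref{thm:space}'s proof (which goes through Theorem~\ref{thm: bilinear maps}) the quantitative rate $\varepsilon_n=1/q^{\Omega(n^2)}$; this is already implicit since Theorem~\ref{thm: bilinear maps} gives the sharp error term $1/q^{\Omega(n^2)}$ in the average order of automorphism groups, and the reduction in Section~\ref{subsec:cor-space} is rate-preserving. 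So I would state that $g(\Lambda(n,q),m)\le \big(1+\tfrac{1}{q^{\Omega(n^2)}}\big)\cdot K\cdot\binom{\binom{n}{2}}{m}_q/|\PGL(n,q)|$, giving
\[
\frac{1}{\binom{\binom{n}{2}}{m}_q}\sum_{\cA}\frac{K(q-1)}{|\Aut(\cA)|}\le 1+\frac{1}{q^{\Omega(n^2)}}.
\]

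Finally I would run the Markov argument. Since $|\Aut(\cA)|\ge (q-1)$ always (the scalars are there), each summand $K(q-1)/|\Aut(\cA)|$ lies in $(0,K]$, and for every $\cA$ with $|\Aut(\cA)|>K(q-1)$ the summand is $<1$, in fact $\le K(q-1)/(|\Aut(\cA)|)$. Writing $\delta$ for the fraction of $\cA$ with $|\Aut(\cA)|>K(q-1)$ — equivalently, since automorphism group orders of matrix spaces are "quantized", $|\Aut(\cA)|\ge q^{?}\cdot$ something strictly above $K(q-1)$; but to be safe I would just use that such $\cA$ contribute at most... — hmm, the clean way: the $\cA$ with $|\Aut(\cA)|\le K(q-1)$ each contribute $\ge 1$ to the sum $\sum_\cA K(q-1)/|\Aut(\cA)|$, so if their fraction is $1-\delta$ then the total sum is $\ge (1-\delta)\binom{\binom{n}{2}}{m}_q$ — wait, that gives a lower bound, the wrong direction. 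Let me instead argue from below on the full sum: $\sum_\cA \tfrac{1}{|\Aut(\cA)|}\ge \tfrac{1}{q-1}\cdot(\text{number of }\cA\text{ with trivial }\Aut) + (\text{rest})$, but that is also not immediately it. The correct Markov step: let $S$ be the set of $\cA$ with $|\Aut(\cA)|\le K(q-1)$; for $\cA\notin S$ we have $|\Aut(\cA)|>K(q-1)$, but in fact automorphism orders are multiples of $(q-1)$ and of the relevant subgroup sizes, and one can show the next possible value is at least (some quantity); more robustly, note $\Aut(\cA)/\text{scalars}$ embeds in $\PGL(n,q)$, and a nontrivial subgroup there has order $\ge$ a prime, but the cleanest bound available without extra structure is simply: the number of $\cA$ with $|\Aut(\cA)| > K(q-1)$, call it $N_{\mathrm{bad}}$, satisfies — since each such $\cA$'s orbit has size $<\binom{\binom{n}{2}}{m}_q/K = |\PGL(n,q)|/(q-1)$... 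Actually the slick route is the standard one: the number of orbits is $g = \sum_{\cA}\tfrac{(q-1)}{|\Aut(\cA)|}\cdot\tfrac{1}{\text{(orbit of }\cA)}$—no. I would instead use that $g(\Lambda(n,q),m)$ counts orbits, and the orbits of $\cA\notin S$ (with $|\overline\Aut(\cA)|>K$) are "small" orbits: each has $<|\PGL(n,q)|/K$ elements, so they cover $<g\cdot|\PGL(n,q)|/K = (1+q^{-\Omega(n^2)})\binom{\binom n2}m_q/K\cdot\ldots$—this overcounts. The genuinely clean finish: from $\frac1{\binom{\binom n2}m_q}\sum_\cA \frac{K(q-1)}{|\Aut(\cA)|}\le 1+q^{-\Omega(n^2)}$ and $\frac{K(q-1)}{|\Aut(\cA)|}\le 1$ for $\cA\notin S$ while $=K(q-1)/|\Aut(\cA)|\ge 1$ (could be $>1$) is not useful; so I separate: $\sum_{\cA\notin S}\tfrac{K(q-1)}{|\Aut(\cA)|}\ge 0$ trivially, and $\sum_{\cA\in S}\tfrac{K(q-1)}{|\Aut(\cA)|}\ge |S|$ since each term is $\ge 1$. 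Hence $|S|\le (1+q^{-\Omega(n^2)})\binom{\binom n2}m_q$, which is vacuous. The resolution is that $S$ should be the \emph{complement}: I want to bound $|S^c|$ where $S^c=\{\cA:|\Aut(\cA)|>K(q-1)\}$, equivalently $|\overline\Aut(\cA)|>K$. For such $\cA$, $\tfrac{K(q-1)}{|\Aut(\cA)|}<1$; useless for an upper bound on $|S^c|$ directly, but: we also know $g=\tfrac1{|\PGL|}\sum_\cA|\overline\Aut(\cA)|\cdot(\text{something})$—actually $\sum_\cA |\overline\Aut(\cA)| = \sum_{\text{orbits}}\sum_{\cA\in O}|\overline\Aut(\cA)| = \sum_{\text{orbits}}|O|\cdot|\overline\Aut(\cA_O)| = \sum_O |\PGL| = g\cdot|\PGL|$. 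So $\tfrac1{\binom{\binom n2}m_q}\sum_\cA|\overline\Aut(\cA)| = g|\PGL|/\binom{\binom n2}m_q \sim K$; i.e. the \emph{average} of $|\overline\Aut(\cA)|$ is $\sim K$, and by Theorem~\ref{thm:space}'s rate it is $\le K(1+q^{-\Omega(n^2)})$. Now Markov applies in the right direction: $\#\{\cA:|\overline\Aut(\cA)|\ge (1+\epsilon)K\}\le \tfrac{(1+q^{-\Omega(n^2)})}{1+\epsilon}\binom{\binom n2}m_q$; taking $\epsilon = q^{-\Omega(n^2)}/2$ (or noting $|\overline\Aut(\cA)|$ is an integer $>K$ forces $|\overline\Aut(\cA)|\ge K+1\ge K(1+1/K)$) yields $\#\{\cA:|\overline\Aut(\cA)|>K\}\le \binom{\binom n2}m_q/q^{\Omega(n^2)}$, which is exactly the claim that all but a $1/q^{\Omega(n^2)}$ fraction have $|\Aut(\cA)|=(q-1)|\overline\Aut(\cA)|\le K(q-1)$.

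The main obstacle is the bookkeeping of the \emph{rate}: I must verify that the $\sim$ in Theorem~\ref{thm:space} is not merely asymptotic equivalence but comes with the explicit multiplicative error $1+1/q^{\Omega(n^2)}$, so that the Markov inequality produces a $1/q^{\Omega(n^2)}$ fraction rather than just an $o(1)$ fraction. This should follow by tracing the reduction in Section~\ref{subsec:cor-space} back to the bound $h_\Lambda(n,m,q)\le q-1+1/q^{\Omega(n^2)}$ of Theorem~\ref{thm: bilinear maps}, but it requires care because passing between "bilinear maps", "surjective bilinear maps / full-rank tuples", and "$m$-dimensional subspaces" involves multiplying and dividing by factors like $|\GL(m,q)|$ and correction terms counting non-surjective maps, each of which must be shown to contribute only a $1/q^{\Omega(n^2)}$ relative error. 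A secondary, minor point is handling the integrality argument replacing "$|\overline\Aut(\cA)|>K$'' by "$|\overline\Aut(\cA)|\ge K+1$'' cleanly when $K$ itself is large — but since $K=q^{m^2}/|\GL(m,q)|$ is an integer (it equals $\prod_{i=1}^m (1-q^{-i})^{-1}$ times... actually $q^{m^2}/|\GL(m,q)| = 1/\prod_{i=1}^m(1-q^{-i})$ which is \emph{not} an integer), one should instead phrase the threshold directly in terms of $|\Aut(\cA)|$, an honest integer, and compare to the (not necessarily integer) bound $K(q-1)$: Markov with $\epsilon$ chosen as half the stated error term handles this uniformly, so this is not a real difficulty.
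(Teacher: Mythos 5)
Your setup is the same averaging strategy the paper uses: pass from Theorem~\ref{thm:space-technical} (i.e.\ Theorem~\ref{thm:space} with the explicit multiplicative error $1+q^{-\Omega(n^2)}$, which the paper does supply, so that worry of yours is fine) to the statement that the average of $|\overline{\Aut}(\cA)|$ over $\cA\in\Gr(\Lambda(n,q),m)$ is at most $K(1+q^{-\Omega(n^2)})$, via the bookkeeping $\sum_{\cA}|\overline{\Aut}(\cA)|=g(\Lambda(n,q),m)\cdot|\PGL(n,q)|$ (your corrected identity; the first displayed identity with $1/|\overline{\Aut}(\cA)|$ is wrong). The genuine gap is the final Markov step, which does not deliver the claimed bound. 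Markov at threshold $(1+\epsilon)K$ with mean at most $K(1+q^{-\Omega(n^2)})$ gives a fraction at most $\frac{1+q^{-\Omega(n^2)}}{1+\epsilon}$: with $\epsilon=q^{-\Omega(n^2)}/2$ this exceeds $1$ (vacuous), and with the integrality variant (threshold $\lceil K\rceil$) it is roughly $K/\lceil K\rceil$, a constant. Markov produces an exponentially small tail only when the threshold is $q^{\Omega(n^2)}$ times the mean, and here the threshold $K$ is of the same order as the mean bound. Even the sharper variant exploiting the pointwise bound $|\overline{\Aut}(\cA)|\geq 1$ (a bad fraction $\delta$ forces the mean to be at least $1+\delta(\lceil K\rceil-1)$) only yields $\delta\leq\frac{K-1+o(1)}{\lceil K\rceil-1}$, again a constant, because $K-1=\Theta(1)$. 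Indeed, no first-moment argument of this shape can work on its own: for $q=2$ (where $K\approx 3.46$), a hypothetical distribution in which half the subspaces have $|\overline{\Aut}(\cA)|=1$ and half have $|\overline{\Aut}(\cA)|=5$ is consistent with the average being below $K$, yet half the subspaces violate the conclusion. So your argument, as written, proves only that a constant fraction of subspaces is good, not all but a $q^{-\Omega(n^2)}$ fraction.

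For comparison, the paper's proof of Corollary~\ref{cor:fraction-technical} runs the contradiction in terms of $|\Aut(\cA)|$ and, at the decisive moment, credits \emph{every} subspace outside the bad set with automorphism order at least $K(q-1)$ when lower-bounding the average, so that a bad fraction $\delta\geq q^{-Dn^2}$ pushes the average above $K(q-1+q^{-Cn^2})$ for $D<C$. That pointwise input (order at least $K(q-1)$ for the good subspaces, rather than the generic $q-1$) is precisely what your proposal lacks and what plain Markov cannot substitute for; note that a priori one only knows $|\Aut(\cA)|\geq q-1$, so this crediting itself requires justification and is not something you could have silently imported. To close the gap along your lines you would need either such a pointwise lower bound, or an orbit-count upper bound without the factor $K$ of slack (i.e.\ average of $|\overline{\Aut}(\cA)|$ at most $1+q^{-\Omega(n^2)}$), before an exponentially small bad fraction can be extracted.
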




\subsection{Application 3: enumerating cube-zero finite commutative algebras}

Closely related to enumerating finite groups is enumerating finite rings \cite{KP70}. This problem is useful in bounding dimensions of varieties of Lie or associative algebras \cite{Ner88}. Recently, Poonen connected this to estimating dimensions of the moduli space of rank-$n$ commutative algebras with an ordered basis, with applications to dimension bounds for the Hilbert scheme of $n$ points in $\mathbb{A}^d$ when $d$ is linearly related to $n$ \cite{Poonen}. In \cite{BM22}, a problem in the proof of \cite{KP70} (which also affects \cite{Poonen}) was corrected, with an improved error term in the upper bounds in \cite{KP70,Poonen}.


Let $A$ be a commutative algebra over $\F_q^n$. We say that $A$ is cube-zero if $A^3=0$. When enumerating finite algebras, cube-zero commutative algebras over finite fields play the role of $p$-groups of class $2$ and exponent $p$ in enumerating finite groups. The variety of cube-zero commutative algebras has been studied in algebraic geometry, as seen in the work of Shafarevich in the 1990's \cite{Sha90} and more recently by Poonen \cite{Poonen}. 

In the literature (see \cite[Lemma 9.1]{Poonen} and \cite[Theorem 4.1]{BM22}), the number of (isomorphism classes of) cube-zero commutative algebras over $\F_q^n$ was estimated as $q^{\frac{2}{27}n^3+\Theta(n^2)}$. A finer analysis shows that the coefficient of the quadratic term on the exponent is bounded between $-\frac{4}{9}$ and $0$. As can be seen from \cite[Theorem 2.1]{KP70} and \cite[Lemma 9.1]{Poonen}, isomorphism classes of cube-zero commutative algebras over $\F_q$ correspond to congruence classes of symmetric matrix spaces. Therefore, by the statement for symmetric bilinear maps in Theorem~\ref{thm: bilinear maps}, through an analysis similar to what is done for groups (Section~\ref{sec:aux}), we can obtain that the number of cube-zero commutative algebras is $q^{\frac{2}{27}n^3-\frac{4}{9}n^2+\Theta(n)}$.

%
%
%

\paragraph{Organisation of the paper.} In Section~\ref{sec:prep}, we collect basic notions and present certain facts about conjugacy classes of $\GL(n, q)$ that are useful to us. In Section~\ref{sec:proof} we prove Theorem~\ref{thm: tensors} and Corollaries~\ref{thm:space}. In Section~\ref{sec:aux} we prove Theorem~\ref{thm:group-enum}.


\section{Preliminary}\label{sec:prep}

\paragraph{Notations.} For $n\in\N$, $[n]:=\{1, 2, \dots, n\}$. For a vector space $V$ and $S\subseteq V$, the span of $S$ in $V$ is denoted by $\linspan(S)$. Let $\GL(n,q)$ be the general linear group of order $n$ over $\F_q$ and $\PGL(n, q)$ its projective version.

\subsection{Bilinear maps and matrix spaces}

Let $V\cong \F_q^n$ and $W\cong \F_q^m$, where $q$ is a prime power. 
The tensor product space $V\otimes V\otimes W^*$ is naturally identified as the space of bilinear maps $V\times V\to W$. Enumerating isometric bilinear maps over finite fields is equivalent to the number of orbits of $V\otimes V\otimes W^*$ under the natural action of $\GL(V)\times\GL(W)$. For this purpose, we may as well consider $V\otimes V\otimes W$ instead of $V\otimes V\otimes W^*$. 
So in the following, we shall not distinguish $W^*$ from $W$. 

Similarly, we use $\odot$ to denote the symmetric product, and $\wedge$ the wedge product. We are also interested in the orbits of $V\odot V\otimes W$ and $V\wedge V\otimes W$ under the natural action of $\GL(V)\times \GL(W)$. For convenience, we shall let  $\Box$ to denote one of the products in $\{\otimes,\odot,\wedge\}$.
Let $g(V\Box V\otimes W)$ be the number of orbits of $V\Box V\otimes W$ under the natural action of $\GL(V)\times\GL(W)$. 

Using the above notations, Theorem~\ref{thm: bilinear maps} can be reformulated as follows, thanks to Burnside's lemma which connects average automorphism group orders and orbit numbers.
\begin{theorem}\label{thm: tensors}
For $C, R\in \Q$ with $C>0$ and $n\in \N$, let $m=m(n)=\lceil C\cdot n+R\rceil$. 
	Let $V\cong \F_q^n$ and $W\cong \F_q^m$. 
    Then for large enough $n$, we have
	$$g(V\Box V\otimes W)\leq (q-1+\frac{1}{q^{\Omega(n^2)}})\cdot \frac{q^{\dim(V\Box V\otimes W)}}{|\GL(V)|\cdot|\GL(W)|}.$$
\end{theorem}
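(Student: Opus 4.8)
The plan is to use Burnside's lemma to reduce the orbit-counting bound to a bound on the average number of fixed points of the $\GL(V) \times \GL(W)$ action on $V \Box V \otimes W$, and then to analyze this average by summing over conjugacy classes of pairs $(P, Q) \in \GL(V) \times \GL(W)$. Concretely, by Burnside's lemma,
\[
g(V \Box V \otimes W) = \frac{1}{|\GL(V)|\cdot|\GL(W)|} \sum_{(P,Q)} |\Fix(P,Q)|,
\]
where $\Fix(P,Q)$ is the set of tensors $\tens{T} \in V \Box V \otimes W$ with $(P,Q)\circ \tens{T} = \tens{T}$. Since $\Fix(P,Q)$ is an $\F_q$-subspace, $|\Fix(P,Q)| = q^{\dim \Fix(P,Q)}$, and $\dim \Fix(P,Q)$ depends only on the conjugacy classes of $P$ and of $Q$. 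So the task becomes: show that $\sum_{(P,Q)} q^{\dim \Fix(P,Q)}$ is at most $(q-1+q^{-\Omega(n^2)}) \cdot q^{\dim(V\Box V\otimes W)}$. The identity-like pairs $(\alpha\id_V, \beta\id_W)$ with $\alpha^2\beta = 1$ — of which there are exactly $q-1$ — each fix the whole space, contributing $(q-1)\cdot q^{\dim(V\Box V\otimes W)}$, which is the dominant term; everything else must be shown to contribute only a $q^{-\Omega(n^2)}$ fraction.

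The core estimate is therefore a bound on $\dim \Fix(P,Q)$ in terms of how far $(P,Q)$ is from being a "scalar" pair. The fixed space of $(P,Q)$ acting on $V \Box V \otimes W$ is the eigenspace for eigenvalue $1$ of the operator $(v_1 \Box v_2 \otimes w) \mapsto (Pv_1 \Box Pv_2 \otimes Q^{-1}w)$ — equivalently (after absorbing inverses/duals as the excerpt does), for eigenvalue $1$ of $P \Box P \otimes Q$ with a suitable normalization. Decomposing $V$ and $W$ into generalized eigenspaces of $P$ and $Q$ over $\overline{\F_q}$, the dimension of the fixed space is controlled by the multiplicities of eigenvalue triples $(\lambda_i, \lambda_j, \mu_k)$ with $\lambda_i \lambda_j \mu_k = 1$ (with the symmetry/antisymmetry constraint when $\Box \in \{\odot, \wedge\}$). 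I would prove a lemma of the form: there is a constant $c = c(C,R) > 0$ such that for every pair $(P,Q)$ that is not one of the $q-1$ scalar pairs above, $\dim \Fix(P,Q) \le \dim(V \Box V \otimes W) - c n^2$. The intuition is that if $P$ is not scalar it has at least two distinct eigenvalues (or a nontrivial Jordan structure), which forces a positive-density set of eigenvalue triples to violate $\lambda_i\lambda_j\mu_k = 1$; since $\dim(V\Box V\otimes W) = \Theta(n^2 m) = \Theta(n^3)$ while we only need to lose $\Omega(n^2)$, there is a lot of room. This is exactly where the facts about conjugacy classes of $\GL(n,q)$ collected in Section~\ref{sec:prep} (rational canonical form, centralizer orders, counts of conjugacy classes by type) will be used.

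Finally I would assemble the sum: split $\sum_{(P,Q)}$ into the $q-1$ scalar pairs plus the rest. For the rest, bound $\sum_{(P,Q) \text{ non-scalar}} q^{\dim\Fix(P,Q)} \le q^{\dim(V\Box V\otimes W) - cn^2} \cdot |\GL(V)|\cdot|\GL(W)|$ crudely using the lemma, since $|\GL(V)|\cdot|\GL(W)| \le q^{n^2 + m^2} = q^{O(n^2)}$, which is swallowed by choosing $c$ large enough relative to the implied constant — or, for a tighter argument, group the non-scalar pairs by conjugacy type and use centralizer-order bounds so that the number of pairs of a given type times $q^{\dim\Fix}$ telescopes. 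Dividing by $|\GL(V)|\cdot|\GL(W)|$ then gives $g(V\Box V\otimes W) \le (q-1) \cdot \frac{q^{\dim(V\Box V\otimes W)}}{|\GL(V)|\cdot|\GL(W)|} + \frac{q^{\dim(V\Box V\otimes W)}}{q^{\Omega(n^2)}\cdot|\GL(V)|\cdot|\GL(W)|}$, which is the claimed bound. The main obstacle is the fixed-space dimension lemma: one must handle Jordan blocks (not just diagonalizable $P$, $Q$), the three cases $\Box \in \{\otimes, \odot, \wedge\}$ uniformly, the interplay with the dual of $W$, and — most delicately — the borderline case where $P$ and $Q$ are "almost scalar" (e.g.\ scalar plus a single small Jordan block, or a scalar on a codimension-one subspace), verifying that even then one still loses $\Omega(n^2)$ dimensions because $m = \Theta(n)$ gives $\Omega(n)$ worth of "bad" triples for each of the $\Omega(n)$ bad directions in $V$.
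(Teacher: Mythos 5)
Your skeleton (Burnside's lemma, reduction to conjugacy classes, isolating the $q-1$ scalar pairs, and a fixed-space dimension deficit for everything else) is the same as the paper's, but the way you propose to close the sum has a genuine gap. Your primary plan is one uniform lemma, ``$\dim\Fix(P,Q)\le\dim(V\Box V\otimes W)-cn^2$ for every non-scalar pair,'' followed by the crude count $|\GL(V)|\cdot|\GL(W)|\le q^{n^2+m^2}$, with the mismatch ``swallowed by choosing $c$ large enough.'' But $c$ is not a free parameter: for almost-scalar pairs the deficit genuinely is only a small multiple of $n^2$. Concretely, take $\alpha^2\beta=1$, $Q=\beta I_m$, and $P=\alpha(I_n+N)$ with $N$ a single nilpotent Jordan block of size $2$; counting Jordan blocks of $(I_n+N)\otimes(I_n+N)$ shows the fixed space in $V\otimes V\otimes W$ has codimension about $2nm\approx 2Cn^2$, while $n^2+m^2\approx(1+C^2)n^2$. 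Since $1+C^2-2C=(1-C)^2>0$ for $C\ne 1$, your crude assembly leaves a term of order $q^{(1-C)^2n^2}$, which diverges rather than decays; the same failure occurs when $P$ is scalar and $Q$ has a single stray eigenvalue (deficit at most $\dim(V\Box V)\le n^2$, again below $n^2+m^2$). So no choice of $c$ makes the one-lemma-plus-trivial-count route work.

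What is actually needed, and what the paper does, is a two-tier trade-off between conjugacy-class size and fixed-space deficit. For pairs far from scalar (a block of size larger than a constant, many blocks of size at least $2$, many distinct eigenvalues, or no dominant eigenvalue), one proves the \emph{stronger} deficit $\dim(V\Box V\otimes W)-\dim\Fix(P,Q)\ge(1+C^2+\epsilon)n^2-O(n)$, which beats the trivial class-size bound $q^{n^2+m^2}$; this is the content of the paper's Propositions~\ref{prop:largest-P}--\ref{prop: bounded}, with thresholds depending on $C$. For the remaining almost-scalar but non-scalar pairs only an $\Omega(n^2)$ deficit is available, and there one must instead use that such matrices have conjugacy classes of size $q^{O(n)}$ (Lemma~\ref{lem:almost_scalar_conj_size}), together with the fact that the number of conjugacy classes is only about $q^{n+m}$ (Lemma~\ref{lem:total_num_conj_classes}). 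Your one-sentence fallback about grouping by conjugacy type and using centralizer orders points in this direction, but that case split and its quantitative thresholds are the actual substance of the proof rather than an optional refinement; as written, the proposal does not establish the theorem.
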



Bilinear maps naturally give rise to matrix spaces as follows. For a bilinear map $\varphi\in V\Box V\otimes W^*$, for $w\in W$, let $\varphi\circ w\in V\Box V$ be the evaluation of $\varphi$ on $w$. Then $\cA_\varphi = \{\varphi\circ w\mid w\in W\}$ is a subspace of $V\Box V$. We say that two subspaces of $V\Box V$ are congruent, if they are in the same orbit under the natural action of $\GL(V)$. It is easy to verify that $\varphi$ and $\psi$ are isomorphic if and only if $\cA_\varphi$ and $\cA_\psi$ are isometric. 

\subsection{Conjugacy classes of \texorpdfstring{$\GL(n, q)$}{glnq}}
We recall some results about the conjugacy classes of $\GL(n, q)$. For $P\in \GL(n, q)$, the conjugacy class of $\GL(n, q)$ containing $P$ is $\conjc(P):=\{TPT^{-1}\mid T\in\GL(n, q)\}$.
We first note the following result about the total number of conjugacy classes.
\begin{lemma}[See e.g. \cite{Mac81}]\label{lem:total_num_conj_classes}
	The number of conjugacy classes in $\GL(n, q)$ is a monic polynomial in $q$ of degree $n$.
\end{lemma}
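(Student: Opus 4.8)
The plan is to count conjugacy classes combinatorially via rational canonical form and then read off the generating function. Recall that a conjugacy class of $\GL(n,q)$ is determined uniquely by its rational canonical form, equivalently by an assignment to each monic irreducible polynomial $f \neq x$ over $\F_q$ of a partition $\lambda^{(f)}$, subject to the constraint $\sum_f |\lambda^{(f)}| \cdot \deg(f) = n$. Hence, if $N_d = N_d(q)$ denotes the number of monic irreducible polynomials of degree $d$ over $\F_q$ other than $x$ (so $N_1 = q-1$ and $N_d$ equals the usual necklace count for $d \ge 2$), the number $c_n(q)$ of conjugacy classes of $\GL(n,q)$ has generating function
\[
\sum_{n \ge 0} c_n(q)\, t^n \;=\; \prod_{d \ge 1} \Big( \sum_{k \ge 0} p(k)\, t^{dk} \Big)^{N_d(q)} \;=\; \prod_{d \ge 1} \prod_{j \ge 1} \frac{1}{(1 - t^{dj})^{N_d(q)}},
\]
where $p(k)$ is the number of partitions of $k$. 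The first step is to justify this product formula carefully, which is standard bookkeeping once rational canonical form is invoked.

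The second step is to evaluate this product. The key identity is $\sum_{d \mid D} d\, \widetilde N_d = q^D$, where $\widetilde N_d$ counts \emph{all} monic irreducibles of degree $d$; incorporating the exclusion of $x$ shifts things by a controlled amount. Concretely, one shows $\prod_{d\ge 1}(1-t^d)^{-\widetilde N_d} = \prod_{D \ge 1} \text{(something)}$ telescopes: since $\sum_{d \mid D} d \widetilde N_d = q^D$ one gets $\prod_{d \ge 1}(1 - u^d)^{-\widetilde N_d} = \frac{1}{1-qu}$ as formal power series in $u$ (this is the classical identity behind $\prod_f (1-t^{\deg f})^{-1} = \sum_n q^n t^n$). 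Substituting $u = t^j$ and taking the product over $j \ge 1$, and then correcting for the removal of the polynomial $x$ (which only affects the degree-$1$ factor, replacing one factor of $(1-t^j)^{-1}$ by nothing for each $j$), yields
\[
\sum_{n\ge 0} c_n(q)\, t^n \;=\; \prod_{j \ge 1} \frac{1 - t^j}{1 - q t^j}.
\]
From this closed form I would extract $c_n(q)$ as a polynomial in $q$: expanding $\prod_j (1-qt^j)^{-1}$ gives coefficients that are polynomials in $q$, and multiplying by the polynomial $\prod_j(1-t^j)$ (a finite operation in each fixed degree) keeps this property. Tracking the top-degree-in-$q$ contribution: the coefficient of $t^n$ in $\prod_{j\ge 1}(1-qt^j)^{-1}$ has leading term $q^n$ (from choosing the $j=1$ factor $n$ times, i.e. the term $q^n t^n$), and the factor $\prod_j(1-t^j)$ starts with $1$, so the $q^n$ survives with coefficient $1$; lower-order-in-$q$ terms only contribute to lower powers of $q$. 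Hence $c_n(q)$ is monic of degree exactly $n$ in $q$.

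The main obstacle, such as it is, is purely organizational: getting the exclusion of the polynomial $x$ threaded correctly through the Euler-product manipulation, and being careful that all manipulations are legitimate as identities of formal power series in $t$ with coefficients in $\Z[q]$ (so that "coefficient of $t^n$" is a well-defined element of $\Z[q]$ at the end). One should also double-check small cases — $c_1(q) = q-1$ (the $q-1$ scalars), $c_2(q) = q^2 - 1$ — against the formula to make sure the normalization is right. No deep input is needed beyond rational canonical form and the standard counting identity for irreducible polynomials over $\F_q$.
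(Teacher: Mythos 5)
Your proof is correct. The paper does not actually prove this lemma---it simply cites Macdonald \cite{Mac81}---so there is no internal argument to compare against; your generating-function derivation is the standard one found in that literature. The bookkeeping checks out: conjugacy classes correspond to partition-valued functions on monic irreducibles $f\neq x$ with $\sum_f |\lambda^{(f)}|\deg(f)=n$; the Euler-product identity $\prod_{d\geq 1}(1-u^d)^{-\widetilde N_d}=\frac{1}{1-qu}$ together with the removal of $x$ (multiplying by $1-u$) and the substitution $u=t^j$ gives $\sum_n c_n(q)t^n=\prod_{j\geq 1}\frac{1-t^j}{1-qt^j}$; and your leading-term analysis is right, since the coefficient of $t^n$ in $\prod_j(1-qt^j)^{-1}$ is $\sum_{\lambda\vdash n}q^{\ell(\lambda)}$, whose unique degree-$n$ term $q^n$ comes from the all-ones partition and survives multiplication by $\prod_j(1-t^j)$ because that factor has constant term $1$ and $q$-free coefficients. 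Your sanity checks $c_1(q)=q-1$ and $c_2(q)=q^2-1$ also agree with the closed form, so the lemma follows as stated.
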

We then estimate the sizes of the conjugacy classes of $\GL(n, q)$. A useful tool is the rational normal form of matrices (cf.~\cite[Sec. 6.7]{herstein1991topics}). 
\begin{definition}[Rational normal forms]
	Let $P \in \GL(n, q)$ and $X$ be a formal variable. Then there are unique irreducible monic polynomials $f_1,\dots,f_k \in \F_q[X]$ and, for each $f_i$, unique exponents $e_{i,1},\dots,e_{i,r_i}$, such that $P$ is conjugate to a block-diagonal matrix where each diagonal-block is the companion matrix of $f_i^{e_{i,j}}$. This representation is unique (up to permutations of the blocks).
\end{definition}

Rational normal forms are representatives for the conjugacy classes. That is, each conjugacy class in $\GL(n, q)$ is uniquely classified by
\begin{itemize}
	\item the irreducible monic polynomials $f_1, \dots, f_k\in \F_q[X]$ and
	\item for each $f_i$, the exponents $e_{i,1},\dots,e_{i,r_i}\in \Z^+$. These exponents yield a partition $\lambda_i=(e_{i,1},\dots,e_{i,r_i})$.
\end{itemize}
The polynomials $f_i^{e_{i,j}}$ where $i\in[k]$ and $j\in[r_i]$ are called the \emph{elementary divisors} of $P$.
\begin{remark}\label{rem:size_vector_space_partition}
	The subspace associated with the companion matrix $f_i^{e_{i,j}}$ is of dimension $\deg(f_i)\cdot e_{i,j}$. Thus, we have
	\begin{equation*}
		n = \sum_{i\in[k]} \sum_{j\in[r_i]} \deg(f_i)\cdot e_{i,j} = \sum_{i\in[k]} \deg(f_i) |\lambda_i|
	\end{equation*}
	where $|\lambda_i|$ is the sum of the parts in the partition $\lambda_i$.
\end{remark}
To exhibit explicit formulas for the size of conjugacy classes, we need the following notions.
\begin{definition}[$q$-Pochhammer symbol]
	The $q$-Pochhammer symbol is 
	$$(u; q)_k := \prod_{i=1}^k \left( 1 - uq^i\right).$$
\end{definition}
Using the $q$-Pochhammer symbol, we can write $|\GL(n, q)| = q^{n^2}\cdot (1;q^{-1})_n$.

\begin{definition}[Dual partition]
	Let $\lambda=(a_1, \dots, a_k)$ be a partition of $n\in \N$. Let
	\begin{equation*}
		m(\lambda, e) := \#\{a_i \in \lambda \mid a_i = e\}
	\end{equation*}
	for the number of parts in $\lambda$ that are equal to $e$. The \emph{dual partition} $\lambda'$ of $\lambda$ is then
	\begin{equation*}
		\lambda' := \left(\sum_{j\geq 1} m(\lambda, j),\;\; \sum_{j\geq 2} m(\lambda, j), \;\;\dots \right)
	\end{equation*}
\end{definition}

Visually, a partition and its dual can be represented by a \emph{Young diagram}: We arrange squares from left to right and top down, with each row representing one part, and the parts are sorted by decreasing size. The dual can be then seen by transposing the diagram (or, alternatively, considering the columns as parts instead of the rows). See Figure \ref{fig:young} for an example.

\begin{figure}[ht!]
	\centering
	\begin{tikzpicture}[x=0.5cm,y=0.5cm, baseline=(current bounding box.north)]
		\draw (0,0) rectangle node{} (1,1);
		\draw (1,0) rectangle node{} (2,1);
		\draw (0,-1) rectangle node{} (1,0);
		\draw (1,-1) rectangle node{} (2,0);
		\draw (0,-2) rectangle node{} (1,-1);
		\draw (0,-3) rectangle node{} (1,-2);
	\end{tikzpicture}
	\hspace{2cm}
	\begin{tikzpicture}[x={(0cm, -0.5cm)},y={(-0.5cm, 0cm)}, baseline=(current bounding box.north)]
		\draw (0,0) rectangle node{} (1,1);
		\draw (1,0) rectangle node{} (2,1);
		\draw (0,-1) rectangle node{} (1,0);
		\draw (1,-1) rectangle node{} (2,0);
		\draw (0,-2) rectangle node{} (1,-1);
		\draw (0,-3) rectangle node{} (1,-2);
	\end{tikzpicture}
	\caption{A Young diagram for the partition $(2, 2, 1, 1)$ (on the left) and its dual partition $(4,2)$ (on the right), which is obtained by transposing the diagram.}
	\label{fig:young}
\end{figure}
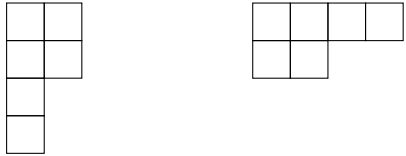
We are now ready to state formulas for the sizes of conjugacy classes.
\begin{lemma}
	Let $P\in\GL(n, q)$. Suppose $f_i^{e_{i,j}}$, $i\in[k]$ and $j\in[r_i]$, are the elementary divisors of $P$, and $\lambda_1,\dots,\lambda_k$ are the corresponding partitions. Then
	\begin{equation}
		\label{eq:conjclasssize}
		|\conjc(P)| = \frac{|\GL(n, q)|}{
			\prod_{i\in[k]} \big(q^{\deg(f_i)\cdot \sum_{e\in \lambda'_i} e^2} \cdot
			\prod_{e\in \lambda_i} (1;q^{-\deg(f_i)})_{m(\lambda_i, e)}\big)
		}
	\end{equation}
\end{lemma}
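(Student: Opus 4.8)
The plan is to compute the order of the centraliser $C := \{T \in \GL(n,q) : TP = PT\}$ and then invoke the orbit--stabiliser theorem: $C$ is exactly the stabiliser of $P$ under the conjugation action, so $|\conjc(P)| = |\GL(n,q)|/|C|$, and it suffices to show $|C|$ equals the denominator of \eqref{eq:conjclasssize}. The first move is the standard module-theoretic reformulation: regard $V = \F_q^n$ as a module $V_P$ over $R = \F_q[X]$ with $X$ acting as $P$. A matrix $T$ commutes with $P$ precisely when it is an $R$-module endomorphism of $V_P$, and (as $V_P$ is finite) such an endomorphism lies in $\GL(n,q)$ exactly when it is bijective; hence $C = \Aut_R(V_P)$, the unit group of the finite ring $E := \mathrm{End}_R(V_P)$.

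Next I would use the primary decomposition $V_P \cong \bigoplus_{i\in[k]} V_i$, where $V_i$ is the $f_i$-primary component, $V_i \cong \bigoplus_{j\in[r_i]} R/(f_i^{e_{i,j}})$. Homomorphisms between modules with coprime annihilators vanish, so $E \cong \prod_i \mathrm{End}_R(V_i)$ as rings and $|C| = \prod_i |\mathrm{End}_R(V_i)^\times|$. It then remains to fix one $i$, set $f = f_i$, $d = \deg f$, $\lambda = \lambda_i$, $\mathbf{k} = \F_{q^d}$, and compute $|E_i^\times|$ for $E_i = \mathrm{End}_R\big(\bigoplus_j R/(f^{e_{i,j}})\big)$. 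This I would split into two sub-steps. (a) The order of $E_i$: from $|\mathrm{Hom}_R(R/(f^a), R/(f^b))| = q^{d\min(a,b)}$ we get $|E_i| = q^{d\sum_{j,j'}\min(e_{i,j}, e_{i,j'})}$, and the elementary identity $\sum_{j,j'}\min(\lambda_j,\lambda_{j'}) = \sum_{s\ge 1}(\lambda'_s)^2 = \sum_{e\in\lambda'}e^2$ (write $\min(a,b) = \sum_{s\ge1}[a\ge s][b\ge s]$ and expand the square) turns this into $|E_i| = q^{d\sum_{e\in\lambda'_i}e^2}$, the first factor. (b) The density of units: the $R/(f^e)$ for distinct $e$ are pairwise non-isomorphic indecomposables with local endomorphism rings $R/(f^e)$, each having residue field $\mathbf{k}$; so by the Krull--Schmidt structure theory of endomorphism rings, $E_i/\mathrm{rad}(E_i) \cong \prod_{e} M_{m(\lambda,e)}(\mathbf{k})$, grouping the summands of $V_i$ by the value of the exponent. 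Since units of a finite ring are detected modulo the radical, $|E_i^\times|/|E_i| = \prod_e |\GL(m(\lambda,e),\mathbf{k})|/|M_{m(\lambda,e)}(\mathbf{k})| = \prod_e (1; q^{-d})_{m(\lambda,e)}$, the second factor.

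Assembling, $|C| = \prod_{i\in[k]} \big(q^{\deg(f_i)\sum_{e\in\lambda'_i}e^2}\prod_e (1;q^{-\deg(f_i)})_{m(\lambda_i,e)}\big)$, and dividing $|\GL(n,q)|$ by this gives \eqref{eq:conjclasssize}.

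I expect the only genuine obstacle to be sub-step (b): correctly locating the Jacobson radical of $E_i = \mathrm{End}_R(V_i)$ --- equivalently, the endomorphism ring of a finite-length torsion module over the discrete valuation ring $R_{(f_i)}$ --- and verifying that its semisimple quotient splits according to distinct exponents, with each block a full matrix algebra over the residue field $\F_{q^d}$ of size equal to the multiplicity $m(\lambda_i,e)$. The remaining ingredients (orbit--stabiliser, primary decomposition, the $\mathrm{Hom}$ count, and the $\min$/conjugate-partition identity) are routine, although one must keep the bookkeeping of $\deg(f_i)$ versus $q$ consistent throughout.
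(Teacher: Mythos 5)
Your proposal is correct, and its top-level skeleton (orbit--stabiliser, so it suffices to compute the order of the centraliser of $P$) is the same as the paper's. The difference is in how the centraliser order is obtained: the paper simply quotes the formula for the number of matrices commuting with $P$ from Macdonald and Fulman and stops there, whereas you derive it from scratch via the $\F_q[X]$-module point of view --- primary decomposition splitting $\mathrm{End}$ into the $f_i$-primary blocks, the count $|\hom(R/(f^a),R/(f^b))|=q^{d\min(a,b)}$ together with the identity $\sum_{j,j'}\min(\lambda_j,\lambda_{j'})=\sum_{e\in\lambda'}e^2$ for the $q$-power factor, and the unit-density argument $|E_i^\times|/|E_i|=\prod_e(1;q^{-d})_{m(\lambda_i,e)}$ via $E_i/\mathrm{rad}(E_i)\cong\prod_e \M_{m(\lambda_i,e)}(\F_{q^d})$ for the Pochhammer factor. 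All of these steps are sound; the one you flag as delicate, locating the semisimple quotient of $E_i$, is the standard fact for a finite-length module written as a sum of pairwise non-isomorphic indecomposables with local endomorphism rings (any homomorphism $R/(f^a)\to R/(f^b)$ with $a\neq b$ is a non-isomorphism and lies in the radical, and the diagonal blocks reduce to matrix rings over the common residue field $\F_{q^d}$), so no genuine gap remains. What the two routes buy: the paper's citation keeps the preliminaries short, while your derivation is self-contained and makes transparent where the two kinds of factors in the denominator of Equation~\eqref{eq:conjclasssize} come from, which is also a useful sanity check on the conventions for the $q$-Pochhammer symbol and for $|\GL(n,q)|=q^{n^2}(1;q^{-1})_n$ used later in Lemma~\ref{lem:bound_size_conjclass}.
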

\begin{proof}
	From~\cite[pp. 181]{Mac98} and~\cite[pp. 55]{Ful02}, the number of matrices $T\in\GL(n, q)$ that commute with $P$ is 
	$$
	\prod_{i\in[k]} \big(q^{\deg(f_i)\cdot \sum_{e\in \lambda'_i} e^2} \cdot \prod_{e\in \lambda_i} (1;q^{-\deg(f_i)})_{m(\lambda_i, e)}\big).
	$$
	The formula then follows by the orbit-stabilizer theorem.
\end{proof}

The term in the denominator of Equation~\eqref{eq:conjclasssize} appears in the definition of the \emph{cycle index} generating function of $\GL(n, q)$~\cite[pp. 55]{Ful02}. The cycle index is originally a tool used to enumerate and count permutations. Kung \cite{kung1981cycle} and Stong \cite{stong1988some} derived a variation of the cycle index for linear groups, which was subsequently used to compute some enumerative formulas like the one for the size of a certain conjugacy class. 

We compute a more convenient estimate for the exponent of $|\conjc(P)|$ of a general $P\in\GL(n,q)$.


\begin{lemma}
	\label{lem:bound_size_conjclass}
	For any $P\in\GL(n,q)$,
	\begin{equation}\label{eq:simplified}
		\log_q |\conjc(P)| \leq n^2 + (1-\log_q(q-1))n - \sum_{i\in[k]} \deg(f_i) \sum_{e\in \lambda'_i} e^2.
	\end{equation}
\end{lemma}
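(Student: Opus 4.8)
The plan is to start from the exact formula for $|\conjc(P)|$ in Equation~\eqref{eq:conjclasssize} and take $\log_q$ of both sides, so that
\[
\log_q|\conjc(P)| = \log_q|\GL(n,q)| - \sum_{i\in[k]}\deg(f_i)\sum_{e\in\lambda'_i}e^2 - \sum_{i\in[k]}\sum_{e\in\lambda_i}\log_q (1;q^{-\deg(f_i)})_{m(\lambda_i,e)}.
\]
The term $-\sum_i\deg(f_i)\sum_{e\in\lambda'_i}e^2$ already appears verbatim in the claimed bound, so it can be carried along untouched. What remains is to show that $\log_q|\GL(n,q)| - \sum_{i,e}\log_q (1;q^{-\deg(f_i)})_{m(\lambda_i,e)} \le n^2 + (1-\log_q(q-1))n$.

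First I would handle $\log_q|\GL(n,q)|$: using the identity $|\GL(n,q)| = q^{n^2}(1;q^{-1})_n$ recorded in the excerpt, this is $n^2 + \log_q(1;q^{-1})_n$. Since $(1;q^{-1})_n = \prod_{i=1}^n(1-q^{-i}) < 1$, the factor $\log_q(1;q^{-1})_n$ is negative, so one option is simply to drop it; but to get the clean linear term $(1-\log_q(q-1))n$ I expect we need to keep a little of it. The natural next step is to bound the $q$-Pochhammer factors in the denominator from below. Each factor $(1;q^{-d})_{m} = \prod_{i=1}^{m}(1-q^{-di})$ is a product of numbers in $(0,1)$, hence is itself in $(0,1)$, so $-\log_q$ of it is positive — i.e. these denominator factors only help to make $|\conjc(P)|$ smaller, and one is tempted to discard them entirely, giving $\log_q|\conjc(P)| \le n^2 + \log_q(1;q^{-1})_n - \sum_i\deg(f_i)\sum_{e\in\lambda'_i}e^2 \le n^2 - \sum_i\deg(f_i)\sum_{e\in\lambda'_i}e^2$. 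The subtle point is that this would lose the $(1-\log_q(q-1))n$ slack, which is actually a \emph{weakening} of the bound (since $1-\log_q(q-1)$ can be positive for small $q$), so in fact discarding everything and writing $\le n^2$ would be \emph{stronger} — meaning the stated inequality with the extra $(1-\log_q(q-1))n$ term follows a fortiori once we check that $1-\log_q(q-1)\ge 0$, equivalently $q-1\le q$, which is trivially true. So the real content is just $\log_q|\conjc(P)| \le n^2 - \sum_i\deg(f_i)\sum_{e\in\lambda'_i}e^2$, and the linear term is free padding presumably inserted for compatibility with later estimates.

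Concretely, the key steps I would carry out are: (1) take $\log_q$ of Equation~\eqref{eq:conjclasssize}; (2) substitute $\log_q|\GL(n,q)| = n^2 + \sum_{i=1}^n\log_q(1-q^{-i})$ and note $\sum_{i=1}^n\log_q(1-q^{-i}) \le 0$; (3) observe that every denominator $q$-Pochhammer factor $(1;q^{-\deg(f_i)})_{m(\lambda_i,e)}$ lies in $(0,1)$, so subtracting its $\log_q$ only increases nothing — i.e. $-\sum_{i,e}\log_q(1;q^{-\deg(f_i)})_{m(\lambda_i,e)} \ge 0$, wait, that goes the wrong way, so I should instead use that this sum of $-\log_q$ terms is \emph{nonnegative} and hence can be dropped only if it appears with a favorable sign — here it appears as $+$ in the bound on $\log_q|\conjc(P)|$ after moving to the other side, so it can indeed be dropped from an upper bound; (4) conclude $\log_q|\conjc(P)| \le n^2 - \sum_i\deg(f_i)\sum_{e\in\lambda'_i}e^2$; (5) finally add back the nonnegative quantity $(1-\log_q(q-1))n$ to obtain the stated inequality. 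I expect the main (minor) obstacle to be getting the signs right in step (3): one must be careful that the $q$-Pochhammer factors are in the \emph{denominator} of $|\conjc(P)|$, so their logarithms are \emph{subtracted}, and since those logarithms are negative the subtraction \emph{increases} $\log_q|\conjc(P)|$ — so they cannot simply be thrown away, and instead one bounds $(1;q^{-d})_m \ge (1;q^{-1})_\infty = \prod_{i\ge1}(1-q^{-i})$, or more crudely notes that the total contribution of all such factors over all $i$ is still bounded below by a quantity whose $\log_q$ is $\Omega(1)$ times $n$ at worst, which is absorbed by the linear slack. This is the one place where a genuine (if routine) estimate on $q$-Pochhammer symbols is needed; everything else is bookkeeping.
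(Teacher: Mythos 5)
There is a genuine gap. Your steps (4)--(5), and the accompanying claim that the linear term is ``free padding'', are false: the $q$-Pochhammer factors sit in the \emph{denominator} of Equation~\eqref{eq:conjclasssize}, are strictly less than $1$, and therefore \emph{inflate} $|\conjc(P)|$ by a factor that is genuinely $q^{\Theta(n)}$; they cannot be discarded when proving an upper bound. Concretely, take $q=3$, $n=2$, $P=\diag(1,2)$: the elementary divisors are $X-1$ and $X-2$, so $\sum_i \deg(f_i)\sum_{e\in\lambda_i'}e^2=2$, and your claimed bound $q^{n^2-2}=9$ fails, since $|\conjc(P)|=|\GL(2,3)|/4=12$. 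So the term $(1-\log_q(q-1))n$ is exactly the compensation for these denominator factors, not padding, and the inequality $\log_q|\conjc(P)|\le n^2-\sum_i\deg(f_i)\sum_{e\in\lambda_i'}e^2$ that your write-up ends on is simply not true.

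Your final paragraph does notice the sign problem, but the repair you sketch is not enough to recover the stated constant. Bounding each symbol by $(1;q^{-\deg(f_i)})_{m}\ge (1;q^{-1})_\infty$ and using that there are at most $n$ symbols gives only $\log_q|\conjc(P)|\le n^2+n\log_q\bigl(1/(1;q^{-1})_\infty\bigr)-\sum_i\deg(f_i)\sum_{e\in\lambda_i'}e^2$; for $q=2$ the coefficient $\log_2\bigl(1/(1;2^{-1})_\infty\bigr)\approx 1.79$ exceeds the required $1-\log_2(2-1)=1$, so the lemma as stated is not established (the weaker $q^{O(n)}$ form would still serve the paper's later estimates, but not Equation~\eqref{eq:simplified} itself). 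The paper's proof avoids this loss by bounding each \emph{individual} factor $1-q^{-\deg(f_i)j}\ge 1-1/q$ and counting the total number of such factors by the total number of parts $\sum_i r_i\le \sum_i\deg(f_i)|\lambda_i|=n$ (Remark~\ref{rem:size_vector_space_partition}); this makes the whole denominator at least $\bigl((q-1)/q\bigr)^n$ while the numerator $(1;q^{-1})_n$ is at most $1$, which yields exactly the linear term $(1-\log_q(q-1))n$. If you replace your per-symbol estimate by this per-factor estimate plus the part count, your argument goes through.
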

\begin{proof}
	We first bound the term
	\begin{equation*}
		T_{n,q} := \frac{
			(1;q^{-1})_n
		}{
			\prod_{i\in[k]}\prod_{e\in \lambda_i}
			(1; q^{-\deg(f_i)})_{m(\lambda_i, e)}
		}
	\end{equation*}
	which occurs in Equation~\eqref{eq:conjclasssize}. (Recall that $|\GL(n, q)| = q^{n^2}\cdot (1;q^{-1})_n$.) We will focus on the denominator of this term. Note that
	\begin{equation*}
		\left(1 - \frac{1}{q^i}\right) \geq \left(1 - \frac{1}{q}\right) \quad \text{for all } i \geq 1.
	\end{equation*}
	Thus, we have
	\begin{align*}
		\prod_{i\in[k]}\prod_{e\in \lambda_i}
		(1; q^{-\deg(f_i)})_{m(\lambda_i, e)} &\geq 
		\prod_{i\in[k]}\prod_{e\in \lambda_i}
		(1; q^{-1})_{m(\lambda_i, e)}
		\\
		&\geq
		\prod_{i\in[k]}\prod_{e\in \lambda_i}
		\left(
		1 - \frac{1}{q}
		\right)^{m(\lambda_i, e)}
		=
		\left(
		1 - \frac{1}{q}
		\right)^{\sum_{i\in[k]}\sum_{e\in \lambda_i} m(\lambda_i, e)}.
	\end{align*}
	Furthermore, we can bound the sum in the exponent using Remark \ref{rem:size_vector_space_partition} as
	\begin{equation*}
		\sum_{i\in[k]} \sum_{e\in\lambda_i} m(\lambda_i, e) \leq \sum_{i\in[k]} \sum_{e\in\lambda_i} e \cdot m(\lambda_i, e) \leq \sum_{i\in[k]} |\lambda_i| \leq \sum_{i\in[k]} \deg(f_i) |\lambda_i|  = n.
	\end{equation*}
	Since $1-\nicefrac{1}{q} < 1$, we have
	\begin{align*}
		\prod_{i\in[k]}\prod_{e\in \lambda_i}
		(1; q^{-\deg(f_i)})_{m(\lambda_i, e)} &\geq 
		\left(
		1 - \frac{1}{q}
		\right)^{\sum_{i}\sum_{e\in \lambda_i} m(\lambda_i, e)}
		\geq
		\left(
		1 - \frac{1}{q}
		\right)^n.
	\end{align*}
	Returning to $T_{n,q}$, we have the following bound
	\begin{align*}
		T_{n,q} \leq \frac{
			(1;q^{-1})_n
		}{
			\left(
			1 - \frac{1}{q}
			\right)^n
		}
		&\leq
		\frac{
			\left(
			1 - \frac{1}{q^n}
			\right)^n
		}{
			\left(1 - \frac{1}{q}\right)^n
		}
		=
		\left(
		\frac{
			(q^n-1)\cdot q
		}{
			q^n\cdot (q-1)
		}
		\right)^n \leq \left(
		\frac{q}{q-1}
		\right)^n .
	\end{align*}
	We can bound Equation~\eqref{eq:conjclasssize} as
	\begin{equation*}
		|\conjc(P)| \leq \frac{
			q^{n^2} \cdot \left(
			\frac{q}{q-1}
			\right)^n
		}{
			\prod_{i\in[k]} q^{\deg(f_i)\cdot \sum_{e\in \lambda'_i} e^2}
		}
		=
		\frac{
			q^{n^2} \cdot \left(
			\frac{q}{q-1}
			\right)^n
		}{
			q^{\sum_{i\in[k]} \deg(f_i)\cdot \sum_{e\in \lambda'_i} e^2}
		}.
	\end{equation*}
	The statement follows by taking the $q$-logarithm on both sides.
\end{proof}

As a consequence of Equation~\eqref{eq:conjclasssize} and its simplification in Lemma~\ref{lem:bound_size_conjclass}, we have the following bound for $\conjc(P)$ when $P$ is ``almost scalar''.
\begin{lemma}
	\label{lem:almost_scalar_conj_size}
	Let $k \geq 1$ be a constant independent of $n$ and $q$. Suppose $P\in\GL(n, q)$ is  \emph{almost scalar}, that is, containing an eigenspace of dimension at least $n-k$ for an eigenvalue $\alpha\in\F_q$. Then
	\begin{equation*}
		|\conjc(P)| \leq q^{O(n)}.
	\end{equation*}
\end{lemma}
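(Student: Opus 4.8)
The plan is to substitute the almost-scalar hypothesis into the general estimate of Lemma~\ref{lem:bound_size_conjclass} and observe that the subtracted quantity $\sum_{i}\deg(f_i)\sum_{e\in\lambda'_i}e^2$ is already $n^2-O(n)$, so it absorbs the leading $n^2$ term and leaves only a linear remainder.

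First I would translate the hypothesis into the language of partitions. Since $\alpha\in\F_q$, the polynomial $X-\alpha$ is one of the irreducible elementary divisors of $P$; call the associated partition $\mu$, with dual partition $\mu'$. The companion matrix of $(X-\alpha)^e$ has a one-dimensional $\alpha$-eigenspace, so $\dim\ker(P-\alpha\cdot\idmat)$ equals the number of parts of $\mu$, which is exactly the first entry $\mu'_1$ of the dual partition. Hence the hypothesis that $P$ has an $\alpha$-eigenspace of dimension at least $n-k$ reads simply $\mu'_1\geq n-k$.

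Next, since every summand in $\sum_{i}\deg(f_i)\sum_{e\in\lambda'_i}e^2$ is nonnegative, keeping only the contribution of the degree-one divisor $X-\alpha$ gives
$$\sum_{i}\deg(f_i)\sum_{e\in\lambda'_i}e^2 \;\geq\; \sum_{e\in\mu'}e^2 \;\geq\; (\mu'_1)^2 \;\geq\; (n-k)^2 \;\geq\; n^2-2kn.$$
Plugging this into Lemma~\ref{lem:bound_size_conjclass} yields
$$\log_q|\conjc(P)| \;\leq\; n^2 + \bigl(1-\log_q(q-1)\bigr)n - \bigl(n^2-2kn\bigr) \;=\; \bigl(1-\log_q(q-1)+2k\bigr)\,n.$$
Since $0 < 1-\log_q(q-1) = \log_q\tfrac{q}{q-1} \leq 1$ for every prime power $q\geq 2$, and $k$ is a constant independent of $n$ and $q$, the right-hand side is $O(n)$, which is the desired bound.

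I do not anticipate a genuine obstacle; the only point requiring care is the reading of ``eigenspace.'' It must mean the ordinary eigenspace $\ker(P-\alpha\idmat)$ and not the generalized eigenspace: a single Jordan block of size $n-k$ at $\alpha$ would give a large generalized eigenspace but a tiny $\sum_{e}e^2$, under which the claim would be false. Once one records that the ordinary-eigenspace hypothesis forces the dual partition $\mu'$ to have a very long first part, everything else — discarding the other elementary divisors, expanding $(n-k)^2$, and bounding $1-\log_q(q-1)$ — is routine bookkeeping.
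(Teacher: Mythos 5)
Your proposal is correct and follows essentially the same route as the paper: translate the eigenspace hypothesis into the statement that the dual partition attached to the divisor $X-\alpha$ has first part at least $n-k$, drop the other (nonnegative) contributions, and substitute $(n-k)^2$ into Lemma~\ref{lem:bound_size_conjclass}. Your observation that the hypothesis must refer to the ordinary eigenspace (not the generalized one) is a nice point of care, but otherwise the argument matches the paper's proof.
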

\begin{proof}
	One of the elementary divisors for $P$ is $X-\alpha$ with degree $1$ and its partition $\lambda$ consists at least $n-k$ ones. Thus, the corresponding dual partition has one single part with value at least $n-k$. Thus, 
	\[
	\sum_{i\in[k]} \deg(f_i) \sum_{e\in \lambda'_i} e^2\geq (n-k)^2
	\]
	and we get
	\begin{equation*}
		\log_q |\conjc(P)| \leq n^2 + (1-\log_q(q-1))n - (n-k)^2 = (2k+1-\log_q(q-1))n-k^2=O(n),
	\end{equation*}
	where we note that $(1-\log_q(q-1))$ is at most $1$.
\end{proof}

\section{Average order of automorphism groups of bilinear maps}\label{sec:proof}

\subsection{Proof of Theorem~\ref{thm: tensors}}\label{subsec:thm-tensors}

We prove Theorem~\ref{thm: tensors} (and therefore Theorem~\ref{thm: bilinear maps}) via the following strategy.

\paragraph{The setting.} 
Let $V$ and $W$ be vector spaces over $\F_q$, with $\dim(V)=n$, $\dim(W)=m$, and $m=\lceil Cn+R\rceil$ for constants $C$ and $R$. 

Note that $Cn+R\leq m\leq Cn+R+1$, and this difference of $1$ would not affect our arguments in the following. Therefore, for convenience, we shall work with $m=Cn+R$. The reader can also think of this as we restrict ourselves to considering those $n$ such that $Cn+R$ is an integer. For example, when $C=1/3$ and $R=2/3$, we shall study only those $n$ such that $n\mod 3=1$. 

\paragraph{Turn to study fixed points.} 
For $(P,Q)\in\GL(V)\times\GL(W)$, let 
$$\Fix((P, Q), V\Box V\otimes W):=\{t\in V\Box V\otimes W\mid (P, Q)\circ t=t\}.$$

By Burnside's lemma, 
$$g(V\Box V\otimes W)=\frac{\sum_{(P,Q)\in\GL(V)\times\GL(W)}|\Fix((P,Q), V\Box V\otimes W)|}{|\GL(V)|\cdot|\GL(W)|}.$$ 

By abbreviating $\Fix((P,Q), V\Box V\otimes W)$ as $\Fix(P,Q)$ and letting $d=\dim(V\Box V)$, the question becomes to prove that
\begin{equation}\label{eq:fix_sum}
	\sum_{(P,Q)\in\GL(V)\times\GL(W)}|\Fix(P,Q)|\cdot q^{-d\cdot m}\leq q-1+\frac{1}{q^{\Omega(n^2)}}.
\end{equation}

\paragraph{Partition according to conjugacy classes.} Let $\cG_V$ be the set of conjugacy classes of $\GL(V)$, and $\cG_W$ the set of conjugacy classes of $\GL(W)$. For $P\in \GL(V)$ (resp.\ $Q\in\GL(W))$, $\tP\in\cG_V$ (resp.\ $\tQ\in\cG_W$) is the conjugacy class containing $P$ (resp.\ $Q$). Suppose $P$ and $P'$ in $\GL(V)$ are conjugate, and $Q$ and $Q'$ in $\GL(W)$ are conjugate. Then it is clear that $|\Fix(P,Q)|=|\Fix(P', Q')|$. 

For $(P,Q)\in \GL(V)\times\GL(W)$, let $\mu_{P,Q}$ be the geometric multiplicity of the eigenvalue $1$ of the action of $(P,Q)$ on $V\Box V\otimes W$. Let 
$\nu_{P,Q}=d\cdot m-\mu_{P,Q}$. Then by grouping matrices according to conjugacy classes, we have 
$$\sum_{(P,Q)\in\GL(V)\times\GL(W)}|\Fix(P,Q)|\cdot q^{-d\cdot m}=\sum_{\tP\in \cG_V, \tQ\in \cG_W}|\tP|\cdot|\tQ|\cdot q^{-\nu_{P,Q}}.$$

To prove Theorem~\ref{thm: tensors}, it is natural to compare $|\tP|\cdot|\tQ|$ and $q^{-\nu_{P,Q}}$. We distinguish the following three cases:
\begin{itemize}
	\item First, the conjugacy class pair $(\tP, \tQ)$ satisfies the trivial bound $|\tP|\cdot|\tQ|\leq q^{n^2+m^2}$. In this case, we aim to show that $\nu_{P,Q}$ is \emph{considerably larger} than $n^2+m^2$; namely, $\nu_{P,Q}-(n^2+m^2)\geq \epsilon n^2-O(n)$ for some constant $\epsilon>0$, which is denoted by $\nu_{P,Q}\gg n^2+m^2$. As $m=C\cdot m+R$ for some constant $C$ and $R$, we only need to prove that the coefficient of $n^2$ in $\nu_{P,Q}$ is (strictly) larger than $1+C^2$.
	\item Second, \emph{$P$ and $Q$ are almost scalar}, i.e.~$(\tP, \tQ)$ satisfies an improved bound $|\tP|\cdot |\tQ|\leq q^{O(n)}$ (cf.~Lemma~\ref{lem:almost_scalar_conj_size}). In this case, we will show that $\nu_{P,Q}=\Omega(n^2)$.
	\item Third, \emph{$P$ and $Q$ are both scalar matrices}, and this contributes to the summation of $q$ in Equation \eqref{eq:fix_sum}.
\end{itemize}

We shall prove a series of propositions which leads to the proof of Theorem~\ref{thm: tensors}. Since conjugated $P,P'\in\GL(V)$ and conjugated $Q,Q'\in\GL(W)$ satisfy $|\Fix(P,Q)|=|\Fix(P', Q')|$, we shall assume without loss of generality that $P$ and $Q$ are in their rational normal forms. That is, $P$ is a block diagonal matrix with the blocks being the companion matrices of its elementary divisors, and similarly for $Q$. Meanwhile, since we consider $\Box=\{\wedge,\odot,\otimes\}$, we have $\frac{1}{2}n^2-\frac{1}{2}n\leq d\leq n^2$.

\paragraph{Bounding the largest block size.} We first show that, if (the rational normal form of) $P$ contains a ``large'' block, then $\nu_{P,Q}\gg n^2+m^2$. 
\begin{proposition}\label{prop:largest-P}
Let $V$ and $W$ be vector spaces over $\F_q$, with $\dim(V)=n$, $\dim(W)=m$, and $m=Cn+R$.
	Let $(P,Q)\in\GL(V)\times\GL(W)$. Suppose $P$'s rational normal form contains at least one block of size $>1+C+1/C$. Then $\nu_{P,Q}\gg n^2+m^2$. 
\end{proposition}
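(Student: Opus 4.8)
The plan is to turn $\nu_{P,Q}$ into a lower bound on the rank of a linear operator built from $P$, and then use the large block of $P$ to force that rank to be large.

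\textbf{Reducing $\nu_{P,Q}$ to a rank.} Identifying $V\Box V\otimes W$ with $\hom(W,V\Box V)$ (using $W\cong W^*$), the action of $(P,Q)$ becomes $\phi\mapsto M_P\circ\phi\circ S$, where $M_P$ is the induced action of $P$ on $V\Box V$ and $S$ is an operator on $W$ whose multiset of Jordan block sizes over $\overline{\F_q}$ equals that of $Q$; the precise convention (some $P^{\pm1}$, $Q^{\pm1}$, transposes) only inverts eigenvalues and is irrelevant below. Hence $\mu_{P,Q}=\dim\{\phi:M_P\phi=\phi S\}$, and regarding $V\Box V$ and $W$ as $\overline{\F_q}[X]$-modules via $M_P$ and $S$, the standard formula for $\hom$ of torsion modules over the PID $\overline{\F_q}[X]$ gives $\mu_{P,Q}=\sum_\lambda\sum_{s,t}\min(a^{(\lambda)}_s,b^{(\lambda)}_t)$, with $\{a^{(\lambda)}_s\}_s$ and $\{b^{(\lambda)}_t\}_t$ the Jordan block sizes of $M_P$, resp.\ $S$, at eigenvalue $\lambda$. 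Using $\min(a,b)\le b$ and $\sum_\lambda\sum_t b^{(\lambda)}_t=\dim W=m$,
\[
\mu_{P,Q}\ \le\ \sum_\lambda\dim\ker(M_P-\lambda\idmat)\cdot m_Q(\lambda)\ \le\ m\cdot\max_\lambda\dim\ker(M_P-\lambda\idmat),
\]
where $m_Q(\lambda)$ is the multiplicity of $\lambda$ as an eigenvalue of $Q$. Writing $d=\dim(V\Box V)$, this yields $\nu_{P,Q}=dm-\mu_{P,Q}\ge m\cdot\min_\lambda\rk(M_P-\lambda\idmat)$, so it suffices to lower bound $\rk(M_P-\lambda\idmat)$ uniformly in $\lambda$.

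\textbf{The rank bound.} Let $V_0\subseteq V$ be the $P$-invariant subspace of a rational-normal-form block of $P$ of size $D:=\dim V_0>1+C+1/C$, say the companion matrix of $f^e$ with $f$ irreducible of degree $d_f$ and $d_f e=D$, and let $V'$ be a $P$-invariant complement, $\dim V'=n-D$. Over $\overline{\F_q}$, $P|_{V_0}$ is similar to $\bigoplus_{j=1}^{d_f}J_e(\beta_j)$ with $\beta_1,\dots,\beta_{d_f}$ \emph{pairwise distinct}, so every eigenvalue occurs in at most one Jordan block of $P|_{V_0}$. Now $M_P$ respects the $P$-invariant decomposition $V\Box V=(V_0\Box V_0)\oplus(V_0\otimes V')\oplus(V'\Box V')$ (with an extra $V'\otimes V_0$ when $\Box=\otimes$), and on $V_0\otimes V'$ it acts as $P|_{V_0}\otimes P|_{V'}$. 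Decomposing $P|_{V'}=\bigoplus_t J_{s_t}(\gamma_t)$ and using that for each $t$ at most one $\beta_j$ solves $\beta_j\gamma_t=\lambda$,
\[
\dim\ker(M_P-\lambda\idmat)\big|_{V_0\otimes V'}\ =\ \sum_{(j,t):\,\beta_j\gamma_t=\lambda}\min(e,s_t)\ \le\ \sum_t s_t\ =\ n-D ,
\]
hence $\rk(M_P-\lambda\idmat)|_{V_0\otimes V'}\ge(D-1)(n-D)$; likewise $\dim\ker(M_P-\lambda\idmat)|_{V_0\Box V_0}\le\dim\ker(M_P-\lambda\idmat)|_{V_0\otimes V_0}\le D$ by the same counting of pairs of blocks of $P|_{V_0}$, so $\rk(M_P-\lambda\idmat)|_{V_0\Box V_0}\ge\binom{D}{2}-D$ — the smallest of the three cases of $\Box$. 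Adding the nonnegative contributions ($D\ge 4$, so $\binom{D}{2}-D\ge 0$),
\[
\rk(M_P-\lambda\idmat)\ \ge\ (D-1)(n-D)+\binom{D}{2}-D\ =\ (D-1)n-\binom{D+1}{2}\qquad\text{for all }\lambda .
\]

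\textbf{Finishing.} Combining, $\nu_{P,Q}\ge m\bigl((D-1)n-\binom{D+1}{2}\bigr)$. Fix a constant $D^*=D^*(C)$ with $C(D^*-1)\ge 2+C^2$. If $D\le D^*$, then $\binom{D+1}{2}=O_C(1)$ and $\nu_{P,Q}\ge C(D-1)n^2-O(n)$; since $D-1>C+1/C$ gives $C(D-1)>1+C^2$ and $D$ ranges over finitely many integers, $\nu_{P,Q}-(n^2+m^2)\ge\epsilon_1 n^2-O(n)$ for a constant $\epsilon_1>0$. If $D^*<D\le n/2$, the map $D\mapsto(D-1)n-\binom{D+1}{2}$ is increasing on $[0,n/2]$, so $\nu_{P,Q}\ge C(D^*-1)n^2-O(n)\ge(2+C^2)n^2-O(n)$, which beats $n^2+m^2=(1+C^2)n^2+O(n)$ by $\Omega(n^2)$. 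If $D>n/2$, writing $D=n-k$ with $0\le k<n/2$ gives $(D-1)n-\binom{D+1}{2}\ge\tfrac38 n^2-O(n)$, whence $\nu_{P,Q}=\Omega(n^3)$. In every case $\nu_{P,Q}\gg n^2+m^2$. The step I expect to require the most care is the rank bound: the essential point is that a single rational block $V_0$ splits over $\overline{\F_q}$ into Jordan blocks with \emph{distinct} eigenvalues, which is precisely what forces $\dim\ker(M_P-\lambda\idmat)|_{V_0\otimes V'}\le\dim V'$ rather than merely $\le\dim(V_0\otimes V')$; the reduction in the first paragraph is routine module theory over $\overline{\F_q}[X]$, and the final case analysis is bookkeeping, the only pitfall being to keep an additive $O_C(1)$ error for bounded $D$ rather than crudely bounding $\binom{D+1}{2}$.
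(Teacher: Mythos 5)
Your argument is correct, but it takes a genuinely different route from the paper. The paper's proof is an explicit "non-fixed subspace" construction: after putting $P$ in rational normal form, it uses the shift structure $Pa_i=a_{i+1}$ inside the large companion block to exhibit a set of basis tensors $S=\{a_i\Box a_j\otimes b_k\}$ (with $i$ ranging over the first $D_1-1$ positions of the big block and $j$ avoiding them and the block's last index) whose span meets $\Fix(P,Q)$ trivially, giving $\nu_{P,Q}\geq (D_1-1)(n-D_1)m$ directly over $\F_q$, with no spectral theory; the same explicit-span technique is then recycled in Propositions~\ref{prop:largest-Q}--\ref{prop:almost-scalar}. You instead prove the structural bound $\nu_{P,Q}\geq m\cdot\min_\lambda \rk(M_P-\lambda\idmat)$ via the $\hom$-formula for torsion modules over $\overline{\F_q}[X]$, and then bound the eigenspaces of the induced operator using the key fact that a single rational block splits over $\overline{\F_q}$ into Jordan blocks with pairwise distinct eigenvalues (separability of the irreducible factor over a finite field). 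What your route buys: a bound uniform in $Q$ (only Jordan block sizes of $Q$ enter, so the $Q^{\pm1}$/transpose conventions indeed wash out), a single argument covering all block sizes $D$ at once including $D=\Theta(n)$ (where you get $\Omega(n^3)$), and a sharper accounting of the deficiency; the cost is heavier machinery (Jordan forms over the closure, tensor-product block counts) and a couple of points needing care that you handle correctly but should keep explicit: the inequality $\dim\ker(M_P-\lambda\idmat)|_{V_0\Box V_0}\leq\dim\ker(M_P-\lambda\idmat)|_{V_0\otimes V_0}$ uses that the symmetric/alternating spaces are genuine subrepresentations of $V_0\otimes V_0$ (true in every characteristic when one works with the concrete symmetric/alternating matrix spaces under congruence, which is the paper's setting), and dropping the extra summand $V'\otimes V_0$ for $\Box=\otimes$ is legitimate since its rank contribution is nonnegative. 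Your final case analysis (bounded $D$ using that $D$ takes finitely many integer values, $D^*<D\leq n/2$ by monotonicity, and $D>n/2$ giving a cubic lower bound) is sound, so the proposal constitutes a complete alternative proof.
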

\begin{proof}
	Without loss of generality, we reorder the blocks of $P$ such that the largest block comes first. The largest block is associated with a subspace $U_1 \subseteq V$ of dimension $\ell=\dim(U_1)> 1+C+1/C$. Let $U_2$ be the complement of $U_1$ with respect to the rational normal form; that is, $U_1 \oplus U_2 = V$ is an invariant decomposition of $V$ with respect to $P$. We can arrange an ordered basis $(a_1, \dots, a_\ell)$ of $U_1$ and an ordered basis $(a_{\ell+1}, \dots, a_{n})$ of $U_2$, so that $P$ has the block matrix form in Figure~\ref{fig: rational normal form}.
	\begin{figure}[ht!]
	\begin{center}
		\tikzset{
			mathtable nodes/.style={
				align=center,
				minimum height=7mm,
				text depth=0.5ex,
				text height=2ex,
				inner xsep=0pt,
				outer sep=0pt
			},
		}
		\begin{tikzpicture}
			\matrix (m) [
			row sep=-\pgflinewidth,
			column sep=-\pgflinewidth,
			matrix of math nodes,
			nodes in empty cells,
			text width=7mm,
			nodes={
				mathtable nodes
			},
			]
			{
				&     &   &  & & & & & & & \\
				&     & a_1 & a_2 & a_3 & & & & & & \\
				& a_1 & 0 & & & & * & & & & \\
				& a_2 & 1 & 0 &  & & * & & & & \\
				& a_3 &   & 1 & 0 &  & * & & & &  \\
				&     &   &   & 1 & \ddots & * & & & & \\
				&     &   &   &   & \ddots & * & & & & \\
				& & & & & & & & & & \\
				&  & & & & & & & & & \\
				& & & & & & & & & & \\
				& & & & & & & & & & \\
			};
			\node[fit=(m-1-3)(m-1-7),mathtable nodes, fill=black!20]{$U_1$};
			\node[mathtable nodes, fill=black!10, minimum width=28mm, minimum height = 7mm] at (m-1-10.west) {$U_2$};
			\node[mathtable nodes, fill=black!20, minimum width=7mm, minimum height = 35mm] at (m-5-1) {$U_1$};
			\node[mathtable nodes, fill=black!10, minimum width=7mm, minimum height = 28mm] at (m-10-1.north) {$U_2$};
			\node[draw, rectangle, inner sep=0pt, fit={(m-3-3.north west) (m-7-7.south east)}] {};
			\node[draw, rectangle, inner sep=0pt, fit={(m-8-8.north west) (m-9-9.south east)}] {{\raisebox{-4mm}{\Huge $*$} }};
			\node[draw, rectangle, inner sep=0pt, fit={(m-10-10.north west) (m-11-11.south east)}] {{\raisebox{-4mm}{\Huge $*$} }};
			
		\end{tikzpicture}
		\caption{The rational normal form of $P$ with respect to the direct sum $U_1\oplus U_2$.}
			\label{fig: rational normal form}
	\end{center}
	\end{figure}
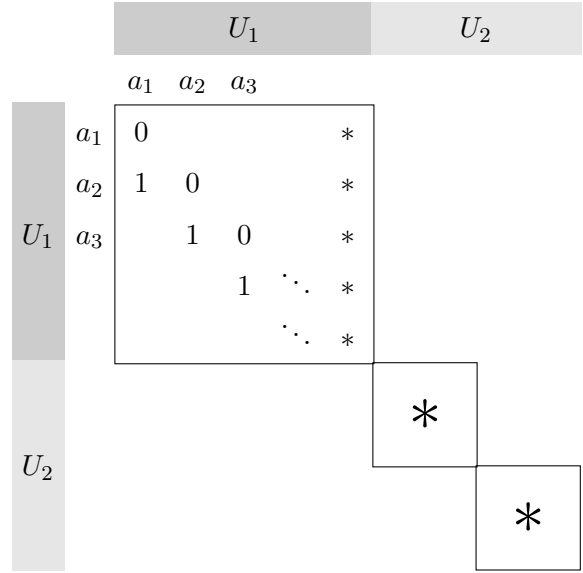
	
	Note that the image of $a_i$ under $P$ is $a_{i+1}$ for $i\in\{1,\dots,\ell-1\}$. Let $\{b_1,\dots,b_m\}$ be a linear basis of $W$ and $D_1=\lceil 1+C+1/C\rceil$. Consider the following linear independent vectors in $V\Box V\otimes W$:
	\[
	S=\{a_i\Box a_j\otimes b_k\mid i\in\{1,\dots,D_1-1\}, j\not\in\{1,\dots,D_1-1,\ell\}, k\in[m]\}.
	\] 
	Let $I_S\subseteq [n]\times [n]\times [m]$ be the index set of $S$. Note that $|S|=|I_S|= (D_1-1)\cdot(n-D_1)\cdot m=C(D_1-1)n^2+(D_1-1)(R-CD_1)n-D_1(D_1-1)R$. Pick an arbitrary vector $v\in\linspan(S)$ and write $v=\sum_{(i,j,k)\in I_S}\lambda_{i,j,k}\ a_i\Box a_j\otimes b_k$, its image under the action of $(P,Q)$ is 
	\[
	(P,Q)\circ v=\sum_{(i,j,k)\in I_S}\lambda_{i,j,k}\ Pa_i\Box Pa_j\otimes Qb_k=\sum_{(i,j,k)\in I_S}\lambda_{i,j,k}\ a_{i+1}\Box Pa_{j}\otimes Qb_k.
	\]
	Note that $Pa_{j}\not\in\linspan(\{a_1,\dots,a_{D_1-1}\})$. Thus, any nonzero vector $v\in\linspan(S)$ cannot be fixed by the action of $(P,Q)$. Therefore, $\linspan(S)\cap \Fix(P,Q)=\{0\}$. Since $\dim(\linspan(S))=|S|= C(D_1-1)n^2- O(n)$,
	we have
	\[
	\nu_{P,Q}\geq C(D_1-1)n^2-O(n).
	\]
	Recall that $m=Cn+R$ for some constant $C$ and $R$. As $D_1>1+C+1/C$, $C(D_1-1)-(1+C^2)>0$, we have $\nu_{P,Q}\gg n^2+m^2$.
\end{proof}

We can follow a similar strategy to bound the largest block size in $Q$. 

\begin{proposition}\label{prop:largest-Q}
	Let $(P,Q)\in\GL(V)\times\GL(W)$. Suppose $Q$'s rational normal form contains at least one block of size $>2C^2+3$. Then $\nu_{P,Q}\gg n^2+m^2$.
\end{proposition}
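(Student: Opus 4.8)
The plan is to imitate the proof of Proposition~\ref{prop:largest-P}, but the situation here is actually easier: the large block now sits inside $Q$, which acts only on the third tensor factor $W$, so a fixed-point-free subspace can be built from an \emph{arbitrary} basis of $V$ together with the ``shift basis'' of the block, and no hypothesis on $P$ will be needed.

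First I would put $P,Q$ in rational normal form and isolate the block of $Q$ of size $\ell:=\deg(f)\cdot e>2C^2+3$, corresponding to an elementary divisor $f^e$; it acts on an invariant subspace $W_1\subseteq W$ via a companion matrix, so there is a basis $b_1,\dots,b_\ell$ of $W_1$ with $Qb_k=b_{k+1}$ for $k\in\{1,\dots,\ell-1\}$ (while $Qb_\ell$ is some combination of $b_1,\dots,b_\ell$). I would extend this to a basis $b_1,\dots,b_m$ of $W$ and fix any basis $a_1,\dots,a_n$ of $V$. Writing $d=\dim(V\Box V)$, I would then consider the subspace spanned by
$$S=\{\,a_i\Box a_j\otimes b_k\ :\ a_i\Box a_j \text{ a standard basis vector of } V\Box V,\ k\in\{1,\dots,\ell-1\}\,\},$$
so that $\dim\linspan(S)=(\ell-1)\,d$.

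The key step is to show $\linspan(S)\cap\Fix(P,Q)=\{0\}$. Any $v\in\linspan(S)$ can be written $v=\sum_{k=1}^{\ell-1}v_k\otimes b_k$ with $v_k\in V\Box V$, and then $(P,Q)\circ v=\sum_{k=1}^{\ell-1}(P\Box P)(v_k)\otimes b_{k+1}$, which involves only $b_2,\dots,b_\ell$ and in particular has no $b_1$-component. If $(P,Q)\circ v=v$, matching the (linearly independent) $b_1$-components gives $v_1=0$, matching $b_2$ then gives $v_2=(P\Box P)(v_1)=0$, and an induction on $k$ yields $v=0$. I expect this triangular cancellation to be the only delicate point, and the subtlety is precisely why $S$ must stop at $b_{\ell-1}$: since $Qb_\ell$ mixes in $b_1$, including $b_\ell$ would break the triangular structure and the argument would fail.

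Finally I would cash this out: $\nu_{P,Q}\geq\dim\linspan(S)=(\ell-1)d\geq\frac{\ell-1}{2}n^2-O(n)$, using $d\geq\frac12 n^2-\frac12 n$ for $\Box\in\{\otimes,\odot,\wedge\}$. Since $m=Cn+R$, we have $n^2+m^2=(1+C^2)n^2+O(n)$, and $\ell>2C^2+3$ gives $\tfrac{\ell-1}{2}-(1+C^2)=\tfrac{\ell-3-2C^2}{2}>0$, a fixed positive constant. Hence $\nu_{P,Q}-(n^2+m^2)=\Omega(n^2)$, i.e.\ $\nu_{P,Q}\gg n^2+m^2$, as required.
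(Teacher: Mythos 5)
Your proposal is correct and follows essentially the same route as the paper: the paper's proof takes exactly the set $S=\{c_i\otimes b_k\mid i\in[d],\ k\in[\ell-1]\}$ built from the shift basis of $Q$'s large block, notes $|S|=(\ell-1)d\geq\frac{\ell-1}{2}n^2-O(n)$, and concludes via the same comparison with $(1+C^2)n^2$. The only difference is that you spell out the triangular cancellation showing $\linspan(S)\cap\Fix(P,Q)=\{0\}$, which the paper leaves as ``analogous to Proposition~\ref{prop:largest-P}''; your verification (including stopping at $b_{\ell-1}$) is exactly the intended one.
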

\begin{proof}
	Let $U\subseteq W$ be the subspace associated with the largest block of $Q$ in its rational normal form. Let $\{b_1,\dots,b_\ell\}$ be a basis of $U$ with $\ell>2C^2+3$. Let $\{c_1,\dots,c_d\}$ be a basis of $V\Box V$, and 
    set $D_2=\lceil2C^2+3\rceil$. We pick the following set of linear independent vectors:
	\[
	S=\{c_i\otimes b_k\mid i\in[d], k\in[\ell-1]\}.
	\]
	The size of $S$ is $(\ell-1)d\geq \frac{D_2-1}{2}n^2- O(n)$.
    Analogously to the proof of Proposition~\ref{prop:largest-P}, it can be verified that $\linspan(S) \cap \Fix(P,Q)=\{0\}$. Thus, $\nu_{P,Q}\geq \frac{D_2-1}{2}n^2- O(n)$. Since $D_2>2C^2+3$, we have $\frac{D_2-1}{2}-(1+C^2)>0$, so $\nu_{P,Q}\gg n^2+m^2$.
\end{proof}

\paragraph{Bounding the number of blocks of size $\geq 2$ (to go to almost diagonal).} From the above, if $P$ contains a block of size $>1+C+1/C$, or $Q$ contains a block of size $>2C^2+3$, then $\nu_{P,Q}$ is considerably larger than $n^2+m^2$. So it remains to deal with the setting when the block sizes in the rational normal forms of $P$ and $Q$ are upper bounded by the above constants, respectively. The next parameter to examine is \emph{the number of blocks of size $\geq 2$}.

\begin{proposition}\label{prop:number of block P}
	Let $(P,Q)\in\GL(V)\times\GL(W)$, where the block sizes in $P$'s rational normal form are upper bounded by $1+C+1/C$. Suppose $P$'s rational normal form has more than $C+1/C$ blocks of size $\geq 2$. Then $\nu_{P,Q}\gg n^2+m^2$.
\end{proposition}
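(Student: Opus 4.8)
The plan is to follow the pattern of Propositions~\ref{prop:largest-P} and~\ref{prop:largest-Q}: exhibit an explicit set $S$ of linearly independent vectors in $V\Box V\otimes W$ with $\linspan(S)\cap\Fix(P,Q)=\{0\}$, so that $\nu_{P,Q}\geq|S|$, and then check that the coefficient of $n^2$ in $|S|$ is strictly larger than $1+C^2$. Put $P$ in rational normal form as a direct sum of cyclic blocks and let $B^{(1)},\dots,B^{(t)}$ be the blocks of size $\geq 2$, where $t>C+1/C$ by hypothesis. For $B^{(r)}$, with cyclic basis $(a^{(r)}_1,\dots,a^{(r)}_{s_r})$ and $s_r\geq 2$, set $x_r:=a^{(r)}_1$, $y_r:=Pa^{(r)}_1=a^{(r)}_2$, and $w_r:=a^{(r)}_{s_r}$ — the last vector of the block, whose image wraps around. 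Let $\mathcal B$ be the corresponding basis of $V$, let $X:=\{x_1,\dots,x_t\}$, and observe that the vectors $x_r$, the vectors $y_r$, and the vectors $w_r$ all lie in $\mathcal B$, that $x_1,\dots,x_t$ are distinct, and that $X\cap\{y_1,\dots,y_t\}=\emptyset$ and $X\cap\{w_1,\dots,w_t\}=\emptyset$. With $b_1,\dots,b_m$ a basis of $W$, put $\mathcal Z:=\mathcal B\setminus\{w_1,\dots,w_t\}$ (so $|\mathcal Z|=n-t$ and $X\subseteq\mathcal Z$) and take
\[ S:=\{\,x_i\Box z\otimes b_k\mid i\in[t],\ z\in\mathcal Z,\ k\in[m]\,\}, \]
where for $\Box\in\{\odot,\wedge\}$ we keep only one representative of each unordered pair and discard the zero elements $x_i\wedge x_i$.

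The crucial point — and the step I expect to be the main obstacle — is verifying $\linspan(S)\cap\Fix(P,Q)=\{0\}$ uniformly for $\Box\in\{\otimes,\odot,\wedge\}$. For $\Box=\otimes$ this is immediate: $\linspan(S)\subseteq\linspan(X)\otimes V\otimes W$, whereas $(P,Q)$ maps $x_i\otimes z\otimes b_k$ to $y_i\otimes Pz\otimes Qb_k\in\linspan(\{y_1,\dots,y_t\})\otimes V\otimes W$, and the two subspaces intersect only in $\{0\}$ because $X$ and $\{y_1,\dots,y_t\}$ are disjoint subsets of $\mathcal B$ — the first tensor factor alone forbids a nonzero fixed vector. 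For $\odot$ and $\wedge$ the factors are symmetrized, so one must also confront the companion-matrix wrap-around: for a basis vector $z\neq w_1,\dots,w_t$ the image $Pz$ is a scalar multiple of a basis vector lying outside $X$, whereas $Pw_r$ typically has a nonzero component along $a^{(r)}_1=x_r$. This is exactly why $\mathcal Z$ excludes the $w_r$: then $(P,Q)$ sends every element of $S$ into $M'\otimes W$, where $M'\subseteq V\Box V$ is spanned by the basis monomials with no factor in $X$, while $\linspan(S)\subseteq M\otimes W$ with $M$ spanned by the monomials having at least one factor in $X$. Since $M\cap M'=\{0\}$ (they are spanned by disjoint sets of basis monomials), it follows that $(M\otimes W)\cap(M'\otimes W)=\{0\}$, hence $\linspan(S)\cap\Fix(P,Q)=\{0\}$ and $\nu_{P,Q}\geq|S|$.

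It remains to count $S$ and close the arithmetic. Splitting $z\in\mathcal Z$ into the cases $z\notin X$ and $z=x_j\in X$ gives, in all three product cases,
\[ |S|\ \geq\ \Big(t(n-2t)+\tbinom{t}{2}\Big)\,m\ =:\ g(t)\,m, \]
where the term $\tbinom{t}{2}m$, from the pairs $x_i\Box x_j$, is genuinely needed: the hypothesis permits $t$ as large as $\lfloor n/2\rfloor$, and there $t(n-2t)$ collapses. Now $g(t)=tn-\tfrac{3}{2}t^2-\tfrac{1}{2}t$ is concave, hence unimodal on $[1,n/2]$, so over the admissible integers $t\in\{t_0,\dots,\lfloor n/2\rfloor\}$, where $t_0:=\lfloor C+1/C\rfloor+1$ is the least integer exceeding $C+1/C$, the minimum of $g$ is attained at an endpoint; for large $n$ that endpoint is $t_0$, since $g(t_0)=t_0 n-O(1)$ while $g(n/2)=\Theta(n^2)$. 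Therefore
\[ \nu_{P,Q}\ \geq\ g(t_0)\,m\ =\ t_0\,C\,n^2-O(n), \]
and as $t_0>C+1/C$ gives $t_0 C>C^2+1$, the coefficient of $n^2$ strictly exceeds $1+C^2$, i.e.\ $\nu_{P,Q}\gg n^2+m^2$, as required. The only genuinely delicate points are the wrap-around, which forces the exclusion of the $w_r$ and the uniform treatment of $\{\otimes,\odot,\wedge\}$, and the observation that one must keep $t$ itself (not merely ``some constant $>C+1/C$'') in the estimate so that the bound survives up to $t\approx n/2$; the rest is routine bookkeeping.
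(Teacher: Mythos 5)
Your proof is correct, and its skeleton is the same as the paper's: exhibit a set $S$ of basis monomials with a factor among the lead vectors of the size-$\geq 2$ blocks, note that $(P,Q)$ pushes every such monomial into the span of monomials with no factor in $X$ (the wrap-around being neutralized exactly as you do, which also handles $\odot$ and $\wedge$), and conclude $\linspan(S)\cap\Fix(P,Q)=\{0\}$, $\nu_{P,Q}\geq|S|$. Where you diverge is in how the count is closed. The paper does not use all $t$ blocks: it takes only the \emph{first} $E_1$ blocks of size $\geq 2$ for a constant $E_1>C+1/C$, and pairs their lead vectors with all basis vectors lying outside those $E_1$ blocks; the hypothesis that each block has size at most $1+C+1/C$ is invoked precisely here, to guarantee that only $\ell\leq(1+C+1/C)E_1=O(1)$ basis vectors are excluded, so $|S|\geq E_1(n-\ell)m\geq E_1Cn^2-O(n)$ in one line, with no dependence on the actual number $t$ of blocks. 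Your route instead keeps all $t$ blocks, excludes only the $t$ wrap-around vectors $w_r$, adds the $\binom{t}{2}$ within-$X$ pairs, and then needs the concavity/endpoint argument to survive the regime $t\approx n/2$. What you gain is that the block-size hypothesis is never used, so your argument proves the statement without it; what the paper's truncation buys is that the degenerate regime you worry about never arises, so your closing remark that one ``must'' keep $t$ itself in the estimate is not accurate --- restricting attention to a constant number $E_1=\lfloor C+1/C\rfloor+1$ of blocks (and excluding only their wrap-around vectors) would also have avoided the collapse and shortened your bookkeeping.
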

\begin{proof}
	In the rational normal form of $P$, we order the blocks of size $\geq 2$ first, and then those of size $1$. Let $U_1, \dots, U_{E_1}$ be the subspaces associated with the first $E_1>C+1/C$ blocks of size $\geq 2$. For $i\in[E_1]$, let $a_i\in U_i$ be the first basis vector in the $i$th block of size $\geq 2$. As each block is of size $\leq 1+C+1/C$, the first $E_1$ blocks take at most $\ell\leq (1+C+1/C)E_1$ many linearly independent basis vectors. Let the remaining basis of $V$ be $\{a_{\ell+1},\dots,a_n\}$ according to $P$'s rational normal form. Let $\{b_1, \dots, b_m\}$ be a basis of $W$. Consider the following set of linearly independent vectors 
	\[
	S=\{a_i\Box a_j\otimes b_k\mid i\in[E_1],j\in\{\ell+1, \dots, n\},k\in[m]\}.
	\]
	Note that $|S|\geq E_1(n-\ell)m$. Then any non-zero $v\in \linspan(S)$ cannot be fixed, because any $a_i$ is sent to a vector that does not appear in $\linspan\{a_i\mid i\in[E_1]\cup\{\ell+1, \dots, n\}\}$.
	Since $\dim(\linspan(S))= E_1(n-\ell)m\geq E_1Cn^2-O(n)$ 
	and $E_1>C+1/C$, we have $E_1C-(1+C^2)>0$, so $\nu_{P,Q}\gg n^2+m^2$. 
\end{proof}

The following bound on the number of blocks of size $\geq 2$ can be shown for $Q$. Note that here we do not require an upper bound on the block sizes of $Q$'s rational norm form.
\begin{proposition}\label{prop:number of block Q}
	Let $(P,Q)\in\GL(V)\times\GL(W)$. Suppose $Q$'s rational normal form has more than $2C^2+2$ many blocks of size $\geq 2$. Then $\nu_{P,Q}\gg n^2+m^2$.
\end{proposition}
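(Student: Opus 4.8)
The plan is to adapt the argument of Proposition~\ref{prop:largest-Q} (and of Proposition~\ref{prop:number of block P}), but to exploit the block structure of $Q$ purely on the $W$-factor of $V\Box V\otimes W$, so that neither $P$ nor the \emph{sizes} of $Q$'s blocks play any role.

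Put $Q$ in rational normal form, and let $E_2$ be the least integer strictly larger than $2C^2+2$; by hypothesis $Q$ has at least $E_2$ companion blocks of size $\geq 2$, and I fix $E_2$ of them. Write $\{b_1,\dots,b_m\}$ for the basis of $W$ underlying the rational normal form, and for the $i$-th chosen block let $b^{(i)}$ be its first basis vector. The defining feature of a companion block of size $\geq 2$ is that $Q$ sends its first basis vector to its second; hence $Qb^{(i)}$ is the second basis vector of that block. Consequently $\{b^{(1)},\dots,b^{(E_2)}\}$ and $\{Qb^{(1)},\dots,Qb^{(E_2)}\}$ are disjoint subsets of $\{b_1,\dots,b_m\}$ (distinct vectors inside each block, and distinct blocks span independent subspaces of $W$), so the subspaces $B:=\linspan\{b^{(i)}\mid i\in[E_2]\}$ and $B':=\linspan\{Qb^{(i)}\mid i\in[E_2]\}$ satisfy $B\cap B'=\{0\}$.

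Now fix a basis $\{c_1,\dots,c_d\}$ of $V\Box V$, where $d=\dim(V\Box V)\geq\tfrac12 n^2-\tfrac12 n$, and consider the linearly independent set
$$S=\{\,c_s\otimes b^{(i)}\mid s\in[d],\ i\in[E_2]\,\}\subseteq V\Box V\otimes W,\qquad |S|=d\cdot E_2.$$
For any $v=\sum_{s,i}\lambda_{s,i}\,c_s\otimes b^{(i)}\in\linspan(S)$ we have $(P,Q)\circ v=\sum_{s,i}\lambda_{s,i}\,(Pc_s)\otimes(Qb^{(i)})$, so $v\in V\Box V\otimes B$ while $(P,Q)\circ v\in V\Box V\otimes B'$; since $B\cap B'=\{0\}$, the equation $(P,Q)\circ v=v$ forces $v=0$. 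Hence $\linspan(S)\cap\Fix(P,Q)=\{0\}$, which gives $\nu_{P,Q}\geq|S|=d\cdot E_2\geq\tfrac{E_2}{2}n^2-O(n)$. Since $E_2>2C^2+2$, the constant $\tfrac{E_2}{2}-(1+C^2)$ is strictly positive, and as $n^2+m^2=(1+C^2)n^2+O(n)$ we conclude $\nu_{P,Q}\gg n^2+m^2$.

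The argument is essentially bookkeeping, so I do not anticipate a real obstacle; the one point that needs care is the claim $\linspan(S)\cap\Fix(P,Q)=\{0\}$, which rests entirely on the disjointness of the ``first'' and ``second'' block basis vectors established above — keeping precise track of exactly which $W$-basis vectors can appear after applying $(P,Q)$. Observe that $P$ never enters this verification, which is precisely why, in contrast with Proposition~\ref{prop:number of block P}, no upper bound on the block sizes of $Q$ is needed here.
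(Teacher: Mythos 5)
Your proposal is correct and is essentially the paper's own proof: the paper picks exactly the same set $S=\{c_s\otimes b_k\}$ built from the first basis vectors of the size-$\geq 2$ blocks of $Q$ and asserts $\linspan(S)\cap\Fix(P,Q)=\{0\}$ as ``easily verified.'' Your only addition is to spell out that verification via the disjointness of the spans of the first and second block basis vectors, which is the intended argument.
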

\begin{proof}
	Assume $Q$'s rational normal form has $E_2>2C^2+2$ many blocks of size $\geq 2$. Arrange an ordered basis of $W$ such that the blocks of size $\geq 2$ appear first. Let $b_1,\dots,b_{E_2}$ be the first basis vectors of these blocks of size $\geq 2$. Let $\{c_1,\dots,c_d\}$ be a basis of $V\Box V$. Let
	\[
	S=\{c_i\otimes b_k\mid i\in[d],~k\in[E_2]\}.
	\]
	It can be easily verified that $|S|= dE_2$ and $\linspan(S)\cap\Fix(P,Q)=\{0\}$. Since $E_2>2C^2+2$, we have $\nu_{P,Q}\geq dE_2\gg n^2+m^2$. 
\end{proof}

\paragraph{Bounding the numbers of different eigenvalues.} We are reduced to the setting where $P$'s (resp.\ $Q$'s) rational normal form has at most $C+1/C$ (resp.\ $2C^2+2$) many blocks of size $\geq 2$. Furthermore, the block sizes of $P$'s (resp.\ $Q$'s) rational normal form are at most $1+C+1/C$ (resp.\ $2C^2+3$). Therefore, $P$ and $Q$ are \emph{almost diagonal} in the following sense: There are ``large'' diagonal blocks in the rational normal forms of $P$ and $Q$. More precisely, Let $U_V$ be the subspace corresponding to the diagonal block in $P$'s rational normal form. By the discussions above, we have 
\begin{equation}\label{eq:s_bound}
	s_V:=\dim(U_V)\geq n-(C+1/C)(1+C+1/C).
\end{equation}
Let $U_W$ be the subspace corresponding to the diagonal block in $Q$'s rational normal form. By the discussions above, we have 
\begin{equation}\label{eq:t_bound}
	s_W:=\dim(U_W)\geq m-(2C^2+2)(2C^2+3).
\end{equation}
We will then restrict to consider the action of $(P,Q)$ on $U_V\Box U_V\otimes U_W$. 
The next parameter to examine is \emph{the number of distinct eigenvalues of $P|_{U_V}$ and $Q|_{U_W}$}. 

Let $\Spec(P|_{U_V})$ be the set of (distinct) eigenvalues of $P$ restricting on $U_V$. The following shows that if $|\Spec(P|_{U_V})|$ is large, then $\nu_{P,Q}$ is also considerably larger than $n^2+m^2$.
\begin{proposition}\label{prop: number of different eigenvalue P}
	Suppose $P\in\GL(V)$ and $Q\in\GL(W)$ satisfy the following:
	\begin{itemize}
		\item $P$'s (resp.\ $Q$'s) rational normal form has at most $C+1/C$ (resp.\ $2C^2+2$) many blocks of size $\geq 2$;
		\item the block sizes of $P$'s (resp.\ $Q$'s) rational normal form are at most $1+C+1/C$ (resp.\ $2C^2+3$).
	\end{itemize}
	Let $U_V$ (resp.\ $U_W$) be the subspace corresponding to the diagonal block in $P$'s (resp.\ $Q$'s) rational normal form with $\dim(U_V)=s_V\geq n-(C+1/C)(1+C+1/C)$ (resp.~$\dim(U_W)=s_W\geq m-(2C^2+2)(2C^2+3)$).
	If $|\Spec(P|_{U_V})|> 1+C+1/C$, then $\nu_{P,Q}\gg n^2+m^2$.
\end{proposition}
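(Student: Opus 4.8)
The plan is to produce, inside $V\Box V\otimes W$, a subspace meeting $\Fix(P,Q)$ only in $\zerovec$ whose dimension exceeds $(1+C^2)n^2$ by a fixed positive multiple of $n^2$; since $\nu_{P,Q}$ is at least that dimension and $n^2+m^2=(1+C^2)n^2+O(n)$, this gives $\nu_{P,Q}\gg n^2+m^2$.

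I would work inside the $(P,Q)$-invariant subspace $U_V\Box U_V\otimes W$. Since $P|_{U_V}$ is diagonal, write $U_V=\bigoplus_{t=1}^{r}E_t$ where $r=|\Spec(P|_{U_V})|$, $\beta_t$ is the $t$-th distinct eigenvalue, $E_t$ its eigenspace and $d_t=\dim E_t$, so that $\sum_t d_t=s_V\ge n-O(1)$. Then $P$ is semisimple on $U_V\Box U_V$, and for each scalar $\pi$ occurring among the products $\beta_t\beta_{t'}$ let $X_\pi\subseteq U_V\Box U_V$ be the $\pi$-eigenspace, so $\sum_\pi\dim X_\pi=\delta:=\dim(U_V\Box U_V)$. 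On $X_\pi\otimes W$ the operator $(P,Q)$ acts as $\pi\cdot(\id\otimes Q)$, hence its fixed space is $X_\pi\otimes\ker(Q-\pi^{-1}\idmat)$ (the other sign conventions only invert $Q$), of dimension $\dim X_\pi$ times the geometric multiplicity of $\pi^{-1}$ as an eigenvalue of $Q$. Summing over $\pi$ and using that the geometric multiplicities of an $m\times m$ matrix, taken over distinct eigenvalues, add to at most $m$, we obtain
\[
\nu_{P,Q}\ \ge\ \delta m-\sum_\pi\dim X_\pi\cdot\bigl(\text{geometric multiplicity of }\pi^{-1}\text{ for }Q\bigr)\ \ge\ m\bigl(\delta-\max_\pi\dim X_\pi\bigr).
\]
So it is enough to bound $\delta-\max_\pi\dim X_\pi$ from below: an $\Omega(n^2)$ bound already forces $\nu_{P,Q}=\Omega(n^3)$, and otherwise I will aim for $\delta-\max_\pi\dim X_\pi\ge(r-1)(n-o(n))$.

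The structural fact to exploit is that each $X_\pi$ is, up to lower-order diagonal terms, the sum of the pieces $E_t\Box E_{t'}$ ($t\ne t'$) along a partial matching of $[r]$, which yields $\max_\pi\dim X_\pi\le\tfrac12\sum_t d_t^2+O(s_V)$ (for $\Box=\otimes$ the factor $\tfrac12$ is absent and $\delta=s_V^2$, which is more favourable). Put $d_1=\max_t d_t$ and $k'=s_V-d_1=\sum_{t\ge2}d_t\ge r-1$, and split into three cases. If $d_1<\tfrac45 s_V$, then $\tfrac12\sum_t d_t^2\le\tfrac12 d_1 s_V<\tfrac25 s_V^2$, so $\delta-\max_\pi\dim X_\pi\ge\binom{s_V}{2}-\tfrac25 s_V^2-O(s_V)=\Omega(n^2)$ and $\nu_{P,Q}=\Omega(n^3)$. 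If $d_1\ge\tfrac45 s_V$, then $k'\le\tfrac14 d_1$, so every $X_\pi$ with $\pi\ne\beta_1^2$ has dimension at most $d_1 k'+O(k'^2)<\binom{d_1}{2}\le\dim X_{\beta_1^2}$; hence $X_{\beta_1^2}$ is the unique largest piece, and discarding it leaves the $r-1$ distinct pieces $X_{\beta_1\beta_t}$ ($t\ge2$), each containing $E_1\Box E_t$, so
\[
\delta-\max_\pi\dim X_\pi\ \ge\ \sum_{t=2}^{r}\dim\bigl(E_1\Box E_t\bigr)\ \ge\ d_1\sum_{t=2}^{r}d_t\ =\ d_1 k'\ \ge\ d_1(r-1).
\]
If moreover $k'\ge\sqrt n$ this is $\ge\tfrac45 s_V\sqrt n=\Omega(n^{3/2})$, so $\nu_{P,Q}=\Omega(n^{5/2})$; if instead $k'<\sqrt n$, then $d_1=n-o(n)$ and the bound becomes $\delta-\max_\pi\dim X_\pi\ge(r-1)(n-o(n))$, giving $\nu_{P,Q}\ge m(r-1)(n-o(n))=C(r-1)n^2-o(n^2)$. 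Since $r>1+C+1/C$ forces $C(r-1)>C(C+1/C)=1+C^2$, we get $\nu_{P,Q}\ge(1+C^2+\varepsilon)n^2-o(n^2)$ with $\varepsilon=C(r-1)-(1+C^2)>0$ fixed, i.e.\ $\nu_{P,Q}\gg n^2+m^2$. The three products $\Box\in\{\otimes,\odot,\wedge\}$ behave the same up to constants in the piece dimensions, $\wedge$ being the least favourable, for which the estimates above are stated.

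I expect the last case — one $P$-eigenvalue of multiplicity $s_V-O(1)$ on $U_V$ — to be the real obstacle. There the "spread" estimate collapses, and every bit of usable motion comes from pairing the dominant eigenspace with the $r-1$ remaining bounded-dimensional eigenspaces, producing only $\Theta(n^2)$ of motion; making this $C(r-1)n^2$ strictly exceed $(1+C^2)n^2$ is precisely what forces the hypothesis $r>1+C+1/C$, and one must argue carefully that $X_{\beta_1^2}$ is the unique largest piece (so that discarding it costs nothing), since otherwise one loses a spurious factor and the threshold would have to be worsened.
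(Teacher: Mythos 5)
Your argument is correct, but it follows a genuinely different route from the paper's. The paper argues in one stroke with decomposable eigenvectors: taking eigenvectors $v_1,\dots,v_{s_V}$ of $P|_{U_V}$ and $w_1,\dots,w_{s_W}$ of $Q|_{U_W}$, a basis vector $v_i\Box v_j\otimes w_k$ is fixed iff $\alpha_i\alpha_j\beta_k=1$; letting $i$ range over representatives of the $F_1>1+C+1/C$ distinct eigenvalues and $j$ over the remaining indices, at most one $i$ per pair $(j,k)$ can satisfy this, so at least $(F_1-1)(s_V-F_1)s_W\geq (F_1-1)Cn^2-O(n)$ unfixed eigenvectors span a subspace meeting $\Fix(P,Q)$ trivially, and no case analysis is needed. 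You instead bound the entire fixed space inside $U_V\Box U_V\otimes W$ by $m\cdot\max_\pi\dim X_\pi$, using that the geometric multiplicities of $Q$ over distinct eigenvalues sum to at most $m$, and then split according to whether $P|_{U_V}$ has a dominant eigenspace. Your route buys two things: it never invokes the hypotheses on $Q$ (neither $U_W$ nor the block bounds), so it proves the statement for arbitrary $Q\in\GL(W)$, and in the spread case it even yields $\nu_{P,Q}=\Omega(n^3)$; the cost is the matching estimate for $\dim X_\pi$ and the three-way case analysis, where the paper's ``at most one $i$ per $(j,k)$'' observation suffices. Two cosmetic points, neither a gap: the diagonal pieces $E_t\Box E_t$ are not literally lower-order (they can have dimension $\Theta(n^2)$), but each is at most $\tfrac{1}{2}d_t^2+O(d_t)$ and occupies a self-pair of your partial matching, so the stated bound $\max_\pi\dim X_\pi\leq\tfrac{1}{2}\sum_t d_t^2+O(s_V)$ stands; and your final error term is $O(n^{3/2})$ rather than the $O(n)$ appearing in the paper's convention for $\nu_{P,Q}\gg n^2+m^2$, which is harmless since it is absorbed into $\epsilon n^2$ for large $n$. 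Also, as you suspected but need not have worried, uniqueness of the largest piece is not essential: if the maximum were attained at some $\pi\neq\beta_1^2$, then $X_{\beta_1^2}$ itself survives in $\delta-\max_\pi\dim X_\pi$ and contributes $\binom{d_1}{2}=\Omega(n^2)$, which is already more than enough.
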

\begin{proof}
	Let $v_1, \dots, v_{s_V}\in V$ be linearly independent eigenvectors of $P$ such that $Pv_i=\alpha_iv_i$ for some $\alpha_1,\dots,\alpha_{s_V}\in\F_q$, and $w_1, \dots, w_{s_W}\in W$ be linearly independent eigenvectors such that $Qw_i=\beta_iw_i$ for some $\beta_1,\dots,\beta_{s_W}\in\F_q$. Then $v_i\Box v_j\otimes w_k\in \Fix(P,Q)$ if and only if $\alpha_i\alpha_j\beta_k=1$. 
	
	Without loss of generality, suppose $\alpha_1,\dots,\alpha_{F_1}$, where $F_1>1+C+1/C$, are all the distinct eigenvalues of $P|_{U_V}$ (after a suitable rearrangement of the eigenvectors). Then for any $j\in\{F_1+1, \dots, s_V\}$ and $k\in[s_W]$, there exists at most one $i\in[F_1]$, such that $\alpha_i\alpha_j\beta_k=1$. Therefore, within $\{v_i\Box v_j\otimes w_k\mid i\in[F_1], j\in\{F_1+1, \dots, s_V\}, k\in[s_W]\}$, at most $(s_V-F_1)s_W$ many vectors are fixed by the action of $(P,Q)$. By taking those non-fixed vectors, we obtain a set $S$ of linearly independent vectors in $V\Box V\otimes W$, such that $\linspan(S)\cap\Fix(P,Q)=\{0\}$. The size of $S$ can be lower bounded by  
	\[
	\begin{split}
		|S|&\geq ({F_1}-1)\cdot (s_V-{F_1})\cdot s_W\\
		&\geq (F_1-1)\cdot (n-(C+1/C)(1+C+1/C)-F_1)\cdot (m-(2C^2+2)(2C^2+3))\\
		&\geq (F_1-1)Cn^2-O(n).
	\end{split}
	\] 
	Since $F_1>1+C+1/C$, we have $(F_1-1)C-(1+C^2)>0$, so $\nu_{P,Q}\geq |S|\gg n^2+m^2$.
\end{proof}

Let $\Spec(B|_{U_W})$ be the set of distinct eigenvalues of $Q$ restricting on $U_W$. Analogously to Proposition~\ref{prop: number of different eigenvalue P}, we have the following.
\begin{proposition}\label{prop: number of different eigenvalue Q}
	Suppose $P\in\GL(V)$ and $Q\in\GL(W)$ satisfy the conditions in~Proposition~\ref{prop: number of different eigenvalue P}. If $|\Spec(B|_{U_W})|> 2C^2+3$, then $\nu_{P,Q}\gg n^2+m^2$.
\end{proposition}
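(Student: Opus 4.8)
The plan is to transcribe the proof of Proposition~\ref{prop: number of different eigenvalue P} with the roles of $V$ and $W$ interchanged, using the fact that $P$ is now almost diagonal to produce enough $P$-eigenvectors inside $V\Box V$. Under the standing hypotheses, $P$'s rational normal form has an $O(1)$-codimensional diagonal block on $U_V$, so $P$ restricted to $U_V$ is a diagonal matrix; writing $P|_{U_V}=\diag(\alpha_1,\dots,\alpha_{s_V})$ in an eigenbasis $v_1,\dots,v_{s_V}$ of $U_V$, the products $v_i\Box v_j$ form a family of at least $\binom{s_V}{2}=\tfrac12 n^2-O(n)$ linearly independent $P$-eigenvectors of $V\Box V$, valid uniformly for $\Box\in\{\otimes,\odot,\wedge\}$ (recall $s_V\geq n-O(1)$). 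Fix such a family $c_1,\dots,c_{d'}$, $d'\geq\tfrac12 n^2-O(n)$, with $P c_i=\gamma_i c_i$, and let $w_1,\dots,w_{F_2}\in U_W$ be eigenvectors of $Q|_{U_W}$ realising the $F_2:=|\Spec(Q|_{U_W})|>2C^2+3$ distinct eigenvalues $\beta_1,\dots,\beta_{F_2}$.

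Next I would look at the linearly independent family $\{c_i\otimes w_k: i\in[d'],\,k\in[F_2]\}\subseteq V\Box V\otimes W$. Each $c_i\otimes w_k$ is an eigenvector of the $(P,Q)$-action with eigenvalue $\gamma_i\beta_k$, hence lies in $\Fix(P,Q)$ if and only if $\gamma_i\beta_k=1$; since $\beta_1,\dots,\beta_{F_2}$ are pairwise distinct, for each $i$ at most one $k\in[F_2]$ satisfies this, so at most $d'$ of these vectors are fixed. Discarding the fixed ones gives, exactly as in Proposition~\ref{prop: number of different eigenvalue P}, a set $S$ of linearly independent vectors with $\linspan(S)\cap\Fix(P,Q)=\{0\}$ and
\[
|S|\ \geq\ d'(F_2-1)\ \geq\ (F_2-1)\Big(\tfrac12 n^2-O(n)\Big)\ =\ \tfrac{F_2-1}{2}\,n^2-O(n),
\]
whence $\nu_{P,Q}\geq|S|\geq\tfrac{F_2-1}{2}n^2-O(n)$.

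Finally, since $n^2+m^2=(1+C^2)n^2+O(n)$ and $F_2>2C^2+3$, we get $\tfrac{F_2-1}{2}-(1+C^2)=\tfrac{F_2-3-2C^2}{2}>0$, i.e.\ $\nu_{P,Q}\gg n^2+m^2$. I do not expect a genuine obstacle: the argument is essentially the dual of Proposition~\ref{prop: number of different eigenvalue P}, and the only points demanding care are verifying that $\dim(U_V\Box U_V)\geq\binom{s_V}{2}=\tfrac12 n^2-O(n)$ holds simultaneously for the tensor, symmetric, and wedge products, and checking that the threshold $2C^2+3$ in the hypothesis is precisely tuned so that $(F_2-1)/2$ strictly exceeds $1+C^2$.
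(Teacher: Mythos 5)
Your proof is correct and follows essentially the same route the paper intends, which simply declares the argument analogous to Proposition~\ref{prop: number of different eigenvalue P}: restrict to the diagonal part $U_V$ of $P$, tensor the at least $\binom{s_V}{2}$ eigenvectors of the induced $P$-action on $V\Box V$ with eigenvectors of $Q|_{U_W}$ realising the $F_2>2C^2+3$ distinct eigenvalues, discard the at most one fixed choice of $k$ per pair, and conclude $\nu_{P,Q}\geq (F_2-1)\binom{s_V}{2}=\tfrac{F_2-1}{2}n^2-O(n)\gg n^2+m^2$. Indeed your worked-out count is the correct quantitative version of the paper's terse one-line bound (whose displayed quantity $s_V\cdot(2C^2+3-1)$ is only linear in $n$ and is evidently a slip), so there is no gap.
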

\begin{proof}
	The proof is analogous to that of Proposition~\ref{prop: number of different eigenvalue P}. The difference lies in that the number of linearly independent vectors in $V\Box V\otimes W$, which is non-fixed by the action of $(P,Q)$, is larger than $s_V\cdot (2C^2+3-1)\gg n^2+m^2$.
\end{proof}

\paragraph{There exists a dominant eigenvalue (to go to almost scalar).} We are now in the setting that not only $P$ and $Q$ are almost diagonal, but the numbers of distinct eigenvalues for $P$ and $Q$ are bounded by $1+C+1/C$ and $2C^2+3$, respectively. We now examine \emph{the multiplicities of these eigenvalues}. 

For an eigenvalue $\alpha$ (resp.\ $\beta$) of $P$ (resp.\ $Q$), let $\mul(\alpha)$ (resp.\ $\mul(\beta)$) be the multiplicity of $\alpha$ (resp.\ $\beta$). 
We shall, for the first time, consider $P$ and $Q$ simultaneously.
\begin{proposition}\label{prop: bounded}
	Suppose $P\in\GL(V)$ and $Q\in\GL(W)$ satisfy the conditions in Proposition~\ref{prop: number of different eigenvalue P}, in addition that $|\Spec(P|_{U_V})|\leq 1+C+1/C$ and $\Spec(Q|_{U_W})|\leq 2C^2+3$. Let $\alpha\in\Spec(P|_{U_V})$ be an eigenvalue with the largest multiplicity in $P$, and $\beta\in\Spec(Q|_{U_W})$ be an eigenvalue with the largest multiplicity in $Q$. Suppose there are more than $(C+1/C)(1+C+1/C)(2C^2+3)$ many eigenvalues of $P|_{U_V}$ that are not $\alpha$, or there are more than $(2C^2+2)(1+C+1/C)^2$ many eigenvalues of $Q|_{U_W}$ that are not $\beta$, then $\nu_{P,Q}\gg n^2+m^2$.
\end{proposition}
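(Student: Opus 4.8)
The plan is to follow the template of Propositions~\ref{prop:largest-P}--\ref{prop: number of different eigenvalue Q}: I will exhibit a subspace of $V\Box V\otimes W$ that meets $\Fix(P,Q)$ trivially and has dimension at least $(1+C^2+\epsilon)n^2-O(n)$ for some constant $\epsilon>0$; since $n^2+m^2=(1+C^2)n^2+O(n)$, this yields $\nu_{P,Q}\gg n^2+m^2$. As before I work inside $U_V\Box U_V\otimes U_W$, fix eigenbases $v_1,\dots,v_{s_V}$ of $U_V$ and $w_1,\dots,w_{s_W}$ of $U_W$ with $Pv_i=\alpha_iv_i$ and $Qw_k=\beta_kw_k$, and use that $\{v_i\Box v_j\otimes w_k\}$ is a basis of $U_V\Box U_V\otimes U_W$ whose members are eigenvectors of $(P,Q)$ with eigenvalue $\alpha_i\alpha_j\beta_k$. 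Hence $\Fix(P,Q)\cap(U_V\Box U_V\otimes U_W)$ is the span of the \emph{fixed} basis vectors (those with $\alpha_i\alpha_j\beta_k=1$), so the span of any set of \emph{non-fixed} basis vectors meets $\Fix(P,Q)$ trivially, and the whole problem reduces to lower bounding the number of non-fixed $v_i\Box v_j\otimes w_k$. I write $D_V:=\dim(U_V\Box U_V)\ge\tfrac12 s_V^2-O(n)$, $p_V:=\mul_{U_V}(\alpha)$, $p_W:=\mul_{U_W}(\beta)$, and note $p_V,p_W=\Omega(n)$ since $P|_{U_V}$ and $Q|_{U_W}$ each have only $O(1)$ distinct eigenvalues. (Here I read ``more than $N$ eigenvalues of $P|_{U_V}$ that are not $\alpha$'' as $s_V-p_V>N$, i.e.\ counting non-$\alpha$ eigenvectors, and similarly for $Q$.)

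The organising dichotomy will be whether $P$ and $Q$ have a \emph{majority eigenvalue}, i.e.\ whether $p_V>\tfrac12 s_V$ and $p_W>\tfrac12 s_W$; since $\alpha$ has the globally largest multiplicity in $P$, if $p_V\le\tfrac12 s_V$ then $\mul_{U_V}(\gamma)\le\tfrac12 s_V+O(1)$ for every $\gamma$, and symmetrically for $Q$. Three cases then need no appeal to the hypothesis. (i) If $P|_{U_V}$ has no majority eigenvalue, then for each field element $c$ the basis pairs $v_i\Box v_j$ with $\alpha_i\alpha_j=c$ number at most $D_V-\Omega(n^2)$ (absent a majority they cannot exhaust more than about three quarters of all pairs), so summing over the $s_W$ choices of $k$ gives $\Omega(n^3)$ non-fixed vectors. (ii) If $Q|_{U_W}$ has no majority eigenvalue, then the number of fixed vectors is $\sum_\delta\mul_{U_W}(\delta)\cdot\#\{v_i\Box v_j:\alpha_i\alpha_j=\delta^{-1}\}\le\big(\max_\delta\mul_{U_W}(\delta)\big)D_V\le\tfrac12 s_WD_V$, leaving $\Omega(n^3)$ non-fixed. (iii) If both have majority eigenvalues $\alpha,\beta$ but $\alpha^2\beta\ne1$, then all $\binom{p_V}{2}\cdot p_W=\Omega(n^3)$ vectors $v_i\Box v_j\otimes w_k$ with $\alpha_i=\alpha_j=\alpha$ and $\beta_k=\beta$ are non-fixed. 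In each of (i)--(iii) the count is $\Omega(n^3)\gg n^2+m^2$.

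The remaining case — $P$ and $Q$ both with majority eigenvalues $\alpha,\beta$ and $\alpha^2\beta=1$ — is the one that forces the hypothesis, since the families used in (i)--(iii) all degenerate. Set $A_1=\{i:\alpha_i=\alpha\}$. If the $Q$-branch holds, $s_W-p_W>(2C^2+2)(1+C+1/C)^2$, I will take the vectors $v_i\Box v_j\otimes w_k$ with $i,j\in A_1$ and $\beta_k\ne\beta$: their eigenvalue is $\alpha^2\beta_k=\beta_k/\beta\ne1$, so they are non-fixed, and there are at least $\binom{p_V}{2}\cdot(2C^2+2)(1+C+1/C)^2$ of them; using $p_V>\tfrac12 s_V$ this is $\tfrac{(C^2+1)(1+C+1/C)^2}{4}n^2-O(n)$, whose leading coefficient exceeds $1+C^2$ precisely because $1+C+1/C>2$. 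If the $Q$-branch fails then $s_W-p_W\le(2C^2+2)(1+C+1/C)^2$, so the $P$-branch must hold, $s_V-p_V>(C+1/C)(1+C+1/C)(2C^2+3)$; I will then take the vectors $v_i\Box v_j\otimes w_k$ with $i\in A_1$, $\alpha_j\ne\alpha$, and $\beta_k=\beta$: their eigenvalue is $\alpha\alpha_j\beta=\alpha_j/\alpha\ne1$, so they are non-fixed, and there are at least $p_V\,(s_V-p_V)\,p_W\ge\tfrac12 s_V\cdot(C+1/C)(1+C+1/C)(2C^2+3)\cdot\big(s_W-O(1)\big)$, i.e.\ $\tfrac{(C^2+1)(1+C+1/C)(2C^2+3)}{2}n^2-O(n)$ up to lower-order terms, with leading coefficient again beating $1+C^2$ since $(1+C+1/C)(2C^2+3)>2$.

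I expect the genuine obstacle to be exactly this last case: one must manufacture an $\Omega(n^2)$-dimensional non-fixed family out of the bounded supply of non-dominant eigenvectors that the hypothesis provides, and the two constants in the statement are calibrated so that, after the bookkeeping, the coefficient of $n^2$ strictly exceeds $1+C^2$. The remaining work is routine but must be done with care: tracking the $O(1)$ discrepancies between $U_V$ and $V$ (and between $\mul_{U_V}(\cdot)$ and multiplicity in $P$), handling the product-dependent constants relating $\binom{p_V}{2}$, $p_V(s_V-p_V)$, and $D_V$ to $\tfrac12 s_V^2$ uniformly for $\Box\in\{\otimes,\odot,\wedge\}$, and confirming that in every branch the stated non-fixed count is a genuine lower bound of the form $(1+C^2+\epsilon)n^2-O(n)$.
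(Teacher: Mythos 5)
Your proposal is correct and takes essentially the same route as the paper: you restrict to the invariant subspace $U_V\Box U_V\otimes U_W$, observe that the fixed vectors are spanned by the eigenbasis vectors with $\alpha_i\alpha_j\beta_k=1$, split on $\alpha^2\beta\neq 1$ versus $\alpha^2\beta=1$, and in the crucial latter case count exactly the same two families of non-fixed vectors (pairs inside $U_\alpha$ tensored with non-$\beta$ eigenvectors for the $Q$-branch, and $U_\alpha$ times non-$\alpha$ eigenvectors times $U_\beta$ for the $P$-branch). The only difference is that the paper dispenses with your majority-eigenvalue dichotomy (your cases (i)--(iii)) by using the bounded-spectrum hypothesis directly via pigeonhole, namely $\mul(\alpha)\geq s_V/(1+C+1/C)$ and $\mul(\beta)\geq s_W/(2C^2+3)$, which is precisely how its two threshold constants are calibrated.
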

\begin{proof}
	Let $U_\alpha\subseteq U_V$ and $U_\beta\subseteq U_W$ be the eigenspaces associated with $\alpha$ and $\beta$, respectively. Note that $\dim(U_\alpha)=\mul(\alpha)\geq s_V/(1+C+1/C)$, $\dim(U_\beta)=\mul(\beta)\geq s_W/(2C^2+3)$, and $\dim(U_{\alpha}\Box U_{\alpha})\geq \binom{s_V/(1+C+1/C)}{2}$. We distinguish between the following cases. 
	\begin{enumerate}
		\item If $\alpha^2\beta\neq 1$, then $(P,Q)$ can not fix any vector of the form $v_1\Box v_2\otimes w$, where $v_1,v_2\in U_\alpha$ are linearly independent eigenvectors of $P$ with eigenvalue $\alpha$, and $w\in U_\beta$ is any eigenvector of $Q$ with respect to the eigenvalue $\beta$. Via the same argument for the proof of Proposition~\ref{prop: number of different eigenvalue P}, we have $\nu_{P,Q}\geq \dim(U_\alpha\Box U_{\alpha}) \cdot \dim(U_\beta)=\Omega(n^3)$, which is a cubic function of $n$, therefore considerably larger than the quadratic $n^2+m^2$.
		\item If $\alpha^2\beta=1$, consider the following two cases.
		\begin{enumerate}
			\item If there are $H_1>(C+1/C)(1+C+1/C)(2C^2+3)$ many eigenvalues of $P|_{U_V}$ which are not $\alpha$, then for any eigenvalue $\alpha'\neq \alpha$ of $P|_{U_V}$,  $\alpha\alpha'\beta\neq 1$. Via the same argument for the proof of Proposition~\ref{prop: number of different eigenvalue P}, we have 
			\[
			\nu_{P,Q}\geq \dim(U_\alpha)\cdot H_1\cdot \dim(U_\beta)>(C+1/C)s_Vs_W\gg n^2+m^2.
			\] 
			\item If there are $H_2>(2C^2+2)(1+C+1/C)^2$ many eigenvalues of $Q|_{U_W}$ which are not $\beta$, for any eigenvalue $\beta'\neq \beta$ of $Q|_{U_W}$, $\alpha^2\beta'\neq 1$. We then have 
			\[
			\nu_{P,Q}\geq \dim(U_\alpha)^2 \cdot H_2>(C^2+1)s_V^2-O(s_V)\gg n^2+m^2.
			\]
		\end{enumerate}
	\end{enumerate}
	This concludes the proof.
\end{proof}

\paragraph{$P$ and $Q$ are almost scalar.} We now in the case that $P$ and $Q$ are \emph{almost scalar}: All but at most $(C+1/C)(1+C+1/C)(2C^2+4)$ (resp.\ $(2C^2+2)((1+C+1/C)^2+2C^2+3)$) eigenvalues of $P$ (resp.\ $Q$) is $\alpha$ (resp.\ $\beta$).
By Lemma~\ref{lem:almost_scalar_conj_size}, $|\conjc(P)|$ and $|\conjc(Q)|$ are upper bounded by $q^{O(n)}$ and $q^{O(m)}=q^{O(n)}$, respectively.

We now show that \emph{if one of $P$ and $Q$ is not fully scalar}, then $\nu_{P,Q}$ is not too small. 
\begin{proposition}\label{prop:almost-scalar}
	Suppose $P\in\GL(V)$ and $Q\in\GL(W)$ are \emph{almost scalar}: All but at most $(C+1/C)(1+C+1/C)(2C^2+4)$ (resp.\ $(2C^2+2)((1+C+1/C)^2+2C^2+3)$) eigenvalues of $P$ (resp.\ $Q$) is $\alpha$ (resp.\ $\beta$). If at least one of them is not fully scalar. Then $\nu_{P,Q}=\Omega(n^2)$. 
\end{proposition}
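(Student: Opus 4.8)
The plan is to bound $\nu_{P,Q}$ from below by exhibiting an $\Omega(n^2)$-dimensional subspace $X\subseteq V\Box V\otimes W$ with $\linspan(X)\cap\Fix(P,Q)=\{0\}$, in the same spirit as the proofs of Propositions~\ref{prop:largest-P}--\ref{prop: bounded}. Write $U_\alpha=\ker(P-\alpha\,\id_V)\subseteq V$ and $U_\beta=\ker(Q-\beta\,\id_W)\subseteq W$; the almost-scalar hypothesis established just above gives $\dim U_\alpha\ge n-O(1)$ and $\dim U_\beta\ge m-O(1)$, and since $m=Cn+R$ with $C>0$ we get $\dim U_\beta=\Omega(n)$ and $\dim(U_\alpha\Box U_\alpha)=\Omega(n^2)$ for every $\Box\in\{\otimes,\odot,\wedge\}$. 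Recall that $(P,Q)$ acts on $V\Box V\otimes W$ by $a\Box b\otimes c\mapsto Pa\Box Pb\otimes Qc$, so that $v\Box v'\otimes w$ is fixed whenever $v,v'\in U_\alpha$ and $w\in U_\beta$. First I would dispatch the case $\alpha^2\beta\ne 1$: here $(P,Q)\circ t=\alpha^2\beta\,t$ for every $t$ in $U_\alpha\Box U_\alpha\otimes U_\beta$, so this subspace (of dimension $\Omega(n^3)$) meets $\Fix(P,Q)$ trivially and $\nu_{P,Q}=\Omega(n^3)$. So from now on assume $\alpha^2\beta=1$.

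Next I split according to which of $P,Q$ is not scalar. If $Q$ is not fully scalar, pick $w_0\in W$ with $z:=Qw_0-\beta w_0\ne 0$ and take $X=\{t\otimes w_0\mid t\in U_\alpha\Box U_\alpha\}$. Since $P$ acts on $U_\alpha\Box U_\alpha$ as the scalar $\alpha^2$, a direct computation gives $(P,Q)\circ(t\otimes w_0)-t\otimes w_0=\alpha^2\,(t\otimes z)$; as $z\ne 0$, the map $t\mapsto t\otimes z$ is injective, so $\linspan(X)\cap\Fix(P,Q)=\{0\}$ and $\nu_{P,Q}\ge\dim X=\dim(U_\alpha\Box U_\alpha)=\Omega(n^2)$. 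If instead $P$ is not fully scalar, pick $u\in V$ with $z:=Pu-\alpha u\ne 0$ (so $u\notin U_\alpha$), fix a basis $B_\beta$ of $U_\beta$, and choose a subset $B^\circ$ of a basis of $U_\alpha$ with $|B^\circ|\ge\dim U_\alpha-1$ for which $\{z\Box v:v\in B^\circ\}$ is linearly independent in $V\Box V$; then $X=\linspan\{u\Box v\otimes w:v\in B^\circ,\ w\in B_\beta\}$ has dimension $|B^\circ|\cdot\dim U_\beta=\Omega(n^2)$, using $u\notin U_\alpha$ for the independence. One computes $(P,Q)\circ(u\Box v\otimes w)-u\Box v\otimes w=\alpha\beta\,(z\Box v\otimes w)$, and these images are linearly independent over $v\in B^\circ$ and $w\in B_\beta$, so again $\linspan(X)\cap\Fix(P,Q)=\{0\}$ and $\nu_{P,Q}=\Omega(n^2)$.

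I expect the main obstacle to be purely bookkeeping rather than a hard estimate: one must verify the linear-independence claims uniformly over the three products $\otimes,\odot,\wedge$, and in particular handle the wedge case where formal products may vanish --- e.g.\ $z\wedge z=0$ when $z\in U_\alpha$ --- by checking that deleting at most one basis vector of $U_\alpha$ always makes $\{z\Box v\}$ independent (this only changes $\dim X$ by a lower-order additive term). A related point is that $V$ appears twice but $W$ only once in $V\Box V\otimes W$, which forces the $P$-non-scalar and $Q$-non-scalar cases to use structurally different test subspaces; assembling these two constructions carefully is the bulk of the argument.
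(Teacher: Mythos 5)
Your proposal is correct and follows essentially the same route as the paper's proof: after dismissing $\alpha^2\beta\neq 1$ via the scalar action on $U_\alpha\Box U_\alpha\otimes U_\beta$, you exhibit the same two test subspaces $(U_\alpha\Box U_\alpha)\otimes w_0$ (when $Q$ is not scalar) and $\linspan\{u\Box v\otimes w\}$ with $v$ ranging over (almost) a basis of $U_\alpha$ and $w$ over a basis of $U_\beta$ (when $P$ is not scalar), and show they meet $\Fix(P,Q)$ trivially. The only difference is cosmetic: you pick the witness vectors directly from $Qw_0\neq\beta w_0$ or $Pu\neq\alpha u$ and make the wedge-degeneracy fix (dropping at most one basis vector) explicit, whereas the paper selects them from the rational normal form and leaves the independence verification implicit.
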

\begin{proof}
 $U_\alpha\subseteq V$ and $U_\beta\subseteq W$ be the corresponding eigenspaces of $\alpha$ and $\beta$, respectively. If $\alpha^2\beta\neq 1$, the claim holds by following Case $1$ in the proof of Proposition~\ref{prop: bounded}. 
	
	Now suppose $\alpha^2\beta=1$. We distinguish between two cases.
	\begin{enumerate}
		\item[(a)] Suppose $Q$ is not fully scalar. If $Q$'s rational normal form is not diagonal, let $w\in W$ be the first basis vector corresponding to a block of size $\geq 2$. If $Q$ is diagonal, let $w\in W$ be an eigenvector with eigenvalue $\neq \beta$. Consider $S=\{ v\Box v'\otimes w\mid v, v'\in U_\alpha\}$. It can be verified that $\dim(\linspan (S))=|S|=\dim(U_\alpha\Box U_{\alpha})$ and $\linspan(S)\cap \Fix(P,Q)=\{0\}$. Therefore, $\nu_{P,Q}\geq |S|=\Omega(n^2)$.
		\item[(b)] Suppose $P$ is not fully scalar. If the rational normal form of $P$ is not diagonal, let $v\in V$ be the first basis vector corresponding to a block of size $\geq 2$. If $P$ is diagonal, let $v\in V$ be an eigenvector of eigenvalue $\neq \alpha$. Consider $S=\{ v\Box v'\otimes w\mid v'\in U_\alpha, w\in U_\beta\}$. It can be verified that $\dim(S)=\dim(U_\alpha)\dim(U_\beta)$ and $\linspan(S)\cap \Fix(P,Q)=\{0\}$. Therefore, $\nu_{P,Q}\geq |S|=\Omega(n^2)$. \qedhere
	\end{enumerate}
\end{proof}

Summarising the above results, we are ready to prove Theorem~\ref{thm: tensors}:
\begin{proof}[Proof of Theorem~\ref{thm: tensors}]
	Let $d=\dim(V\Box V)$. The conjugacy class pairs in $\GL(V)\times\GL(W)$ fall into one of the following three classes. 
	\begin{itemize}
		\item For non-zero $\alpha, \beta\in\F_q$, let $\cG_{\alpha,\beta}$ be the conjugacy class pair corresponding to  $P =\alpha I_n$ and $Q=\beta I_m$. In this case, $\Fix(P,Q)=\{0\}$ if $\alpha^2\beta\neq 1$, and $\Fix(P,Q)=V\Box V\otimes W$ if $\alpha^2\beta=1$. Because there are $q-1$ choices of $(\alpha,\beta)$ satisfying $\alpha^2\beta=1$, we have 
		\[
		\sum_{(P,Q)\in\cG_{\alpha,\beta}}|\Fix(P,Q)|\cdot q^{-dm}=q-1.
		\]
		\item Let $\cG_\mathit{AS}$ be the conjugacy class pairs corresponding to almost scalar matrices $(P,Q)$ satisfying that there exists an eigenvalue of $P$ (resp.\ $Q$) with geometric multiplicity $(C+1/C)(1+C+1/C)(2C^2+4)$ (resp.\ $(2C^2+2)((1+C+1/C)^2+2C^2+3)$).
		By Proposition~\ref{prop:almost-scalar}, $\nu_{P,Q}=\Omega(n^2)$ if $(P,Q)\in\cG_{AS}$. The number of conjugacy classes is bounded by $O(q^n)$ (cf.~Lemma~\ref{lem:total_num_conj_classes}) and the size of each conjugacy class is bounded by $q^{O(n)}$ (cf.~Lemma~\ref{lem:almost_scalar_conj_size}). We have
		\[
		\sum_{(P,Q)\in\cG_\mathit{AS}}|\Fix(P,Q)|\cdot q^{-dm}\leq O(q^n)\cdot q^{O(n)}\cdot q^{-\nu_{P,Q}}\leq q^{-\Omega(n^2)}.
		\]
		\item Let $\cG_\mathit{NAS}$ be the set of the rest conjugacy class pairs. By Proposition~\ref{prop:largest-P}--\ref{prop: bounded} and the number of conjugacy classes is bounded by $O(q^n)$ (cf.~Lemma~\ref{lem:total_num_conj_classes}), we have
		\[
		\sum_{(P,Q)\in\cG_{NAS}}|\Fix(P,Q)|\cdot q^{-dm}\leq O(q^n)\cdot q^{n^2+m^2}\cdot q^{-\nu_{P,Q}}\leq q^{-\Omega(n^2)},
		\]
		where we utilize the trivial bound $q^{n^2}$ (resp.\ $q^{m^2}$) for the size of each conjugacy class in $\GL(V)$ (resp.\ $\GL(W)$).  
	\end{itemize}
	Combine the above three inequalities, we have that
	\[
	\sum_{(P,Q)\in\GL(V)\times\GL(W)}|\Fix((P,Q))|\cdot q^{-dm}\leq q-1+\frac{1}{q^{\Omega(n^2)}}.\qedhere
	\]
\end{proof}

\subsection{Proof of Theorem~\ref{thm:space} and Corollary~\ref{cor:fraction}}\label{subsec:cor-space}
Theorem~\ref{thm: tensors} leads to the following result on subspaces of $V\Box V$. 
\begin{theorem}\label{thm:space-technical}
	Suppose $V\cong\F_q^n$ and $m=C\cdot n+R$ for constants $C$ and $R$. Let $K=q^{m^2}/|\GL(m,q)|$, and $\Box\in\{\otimes,\odot,\wedge\}$. Then 
	$$g(V\Box V, m)\leq K\cdot (1+\frac{1}{q^{\Omega(n^2)}})\cdot \frac{|\Gr(V\Box V, m)|}{|\PGL(V)|}.$$
\end{theorem}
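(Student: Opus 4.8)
The plan is to relate the orbit count $g(V\Box V, m)$ of $m$-dimensional subspaces of $V\Box V$ under $\GL(V)$ to the orbit count $g(V\Box V\otimes W)$ of bilinear maps (tensors), which we have just bounded in Theorem~\ref{thm: tensors}. First I would recall the correspondence already noted in Section~\ref{sec:prep}: a tensor $\varphi\in V\Box V\otimes W$ with $\dim(W)=m$ determines the subspace $\cA_\varphi=\{\varphi\circ w\mid w\in W\}\subseteq V\Box V$, and $\varphi,\psi$ are isomorphic (under $\GL(V)\times\GL(W)$) iff $\cA_\varphi,\cA_\psi$ are congruent (under $\GL(V)$). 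The subtlety is that $\cA_\varphi$ need not be exactly $m$-dimensional — it could have smaller dimension — and distinct tensors with the same image subspace are identified. The standard way to handle this is to count the tensors $\varphi$ whose image is exactly $m$-dimensional: such a $\varphi$ is the same data as a surjection $W\surmap \cA$ together with a choice of $m$-dimensional $\cA\in\Gr(V\Box V,m)$, i.e.\ there are exactly $|\GL(W)|$ tensors mapping onto a given $\cA$ (choices of ordered basis of $\cA$ indexed by $W$, equivalently isomorphisms $W\to\cA$). Hence the tensors of full image rank $m$ form $|\GL(W)|\cdot|\Gr(V\Box V,m)|$ many points of $V\Box V\otimes W$, and the $\GL(V)\times\GL(W)$-action on them corresponds exactly to the $\GL(V)$-action on $\Gr(V\Box V,m)$ (the $\GL(W)$-factor acting only on the parametrisation fibre). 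Therefore the number of orbits of full-rank tensors equals $g(V\Box V,m)$.

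Next I would run a Burnside/orbit-counting argument directly on the $\GL(V)$-action on $\Gr(V\Box V,m)$, mirroring the proof of Theorem~\ref{thm: tensors}: $g(V\Box V,m)=\frac{1}{|\GL(V)|}\sum_{P\in\GL(V)}|\Fix(P,\Gr(V\Box V,m))|$. Since scalars $\alpha I_n$ act trivially on $V\Box V$ up to an overall scalar on each matrix — more precisely $\alpha I_n$ sends $A\in V\Box V$ to $\alpha^2 A$, so it fixes every subspace — all of $\PGL(V)$ is the effective group, and the $q-1$ scalar matrices all fix all of $\Gr(V\Box V,m)$, contributing the ``main term'' $(q-1)|\Gr(V\Box V,m)|$, i.e.\ $|\Gr(V\Box V,m)|$ after dividing by $|\GL(V)|$ versus $|\PGL(V)|$. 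For non-scalar $P$, the fixed subspaces are governed by the eigenvalue-$1$ behaviour of $P$ acting on $V\Box V$: if $W'\subseteq V\Box V$ is the span of all generalised-eigen-directions with ``interesting'' structure, then any $P$-invariant $m$-subspace must be spanned by eigenvectors (when $P$ is semisimple on the relevant part), and the count of such subspaces is governed by a Gaussian-binomial over the eigenspaces. The key point is that I can reuse verbatim Propositions~\ref{prop:largest-P}--\ref{prop:almost-scalar}: in every case where $P$ is not almost scalar, the dimension $\nu_P$ of a complement to $\Fix(P)$ inside $V\Box V$ is $\gg n^2$ (this is literally what those propositions prove, with $Q$ taken trivial / the $W$-factor absent), so the number of $P$-fixed $m$-subspaces is at most $q^{O(mn)}$-ish, negligible after multiplying by the $O(q^n)$ conjugacy classes; and when $P$ is almost scalar but not scalar, Proposition~\ref{prop:almost-scalar} again gives $\nu_P=\Omega(n^2)$, with $|\conjc(P)|\le q^{O(n)}$ by Lemma~\ref{lem:almost_scalar_conj_size}. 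Summing, $g(V\Box V,m)\le (q-1)\frac{|\Gr(V\Box V,m)|}{|\GL(V)|}+\frac{q^{-\Omega(n^2)}|\Gr(V\Box V,m)|}{|\GL(V)|}$, and rewriting $\frac{q-1}{|\GL(V)|}=\frac{1}{|\PGL(V)|}$ gives the claimed bound once one checks $K=q^{m^2}/|\GL(m,q)|$ is simply the factor converting between counting tensors of full rank and counting subspaces (it does not actually appear in this formulation — so I would instead state and prove the inequality directly with $\frac{1}{|\PGL(V)|}$ in front, and note the $K$ arises only in the two-sided asymptotic of Theorem~\ref{thm:space}).

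Alternatively, and perhaps more cleanly matching the paper's flow, I would derive Theorem~\ref{thm:space-technical} as a formal consequence of Theorem~\ref{thm: tensors} by the fibre-counting identity above: $g(V\Box V\otimes W)\ge$ (number of orbits of full-rank tensors) $=g(V\Box V,m)$, while on the other hand every orbit of full-rank tensors has size a $\frac{1}{|\GL(W)|}$-fraction accounted for correctly, so that $g(V\Box V,m)\le g(V\Box V\otimes W)\le (q-1+q^{-\Omega(n^2)})\cdot\frac{q^{\dim(V\Box V\otimes W)}}{|\GL(V)||\GL(W)|} = (q-1+q^{-\Omega(n^2)})\cdot K\cdot\frac{q^{dm}}{|\GL(V)|}$ where $d=\dim(V\Box V)$; and since $|\Gr(V\Box V,m)|=\gbinom{d}{m}{q}\sim q^{m(d-m)}$ and $q^{dm}=q^{m(d-m)}q^{m^2}$, one has $\frac{q^{dm}}{|\GL(V)|}=K'\cdot\frac{|\Gr(V\Box V,m)|}{|\PGL(V)|}$ for an explicit $K'$ absorbing lower-order terms — tracking these exact constants is the one genuinely fiddly bookkeeping step. \textbf{The main obstacle} I anticipate is precisely this: correctly handling the lower-rank tensors (those whose image has dimension $<m$), showing they are a negligible fraction and do not disturb the bound, and pinning down the constant $K$ so the inequality comes out exactly as stated rather than off by a polynomial or a $\gbinom{d}{m}{q}$-vs-$q^{m(d-m)}$ discrepancy; the representation-theoretic heart (bounding $\nu_P$) is already done for us by the propositions of Section~\ref{subsec:thm-tensors}.
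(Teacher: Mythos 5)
Your second (``alternatively'') route is exactly the paper's proof: restrict to tensors $t\in V\Box V\otimes W$ whose image subspace $U_t$ is exactly $m$-dimensional, observe that two such tensors are $\GL(V)\times\GL(W)$-equivalent if and only if their image subspaces are $\GL(V)$-equivalent, so $g(V\Box V,m)$ equals the number of orbits of these full-image tensors, which is at most $g(V\Box V\otimes W)$, and then apply Theorem~\ref{thm: tensors}. The bookkeeping you flag as the main obstacle is exact rather than asymptotic: with $d=\dim(V\Box V)$ one has $\frac{q^{dm}}{|\GL(V)|\,|\GL(W)|}=K\cdot\frac{q^{m(d-m)}}{|\GL(V)|}$ and $q^{m(d-m)}\le \gbinom{d}{m}{q}=|\Gr(V\Box V,m)|$, while dividing the factor $q-1+q^{-\Omega(n^2)}$ by $q-1$ turns $|\GL(V)|$ into $|\PGL(V)|$ and produces the $1+q^{-\Omega(n^2)}$ of the statement; in particular $K$ is exactly the price of replacing $1/|\GL(W)|$ by $q^{-m^2}$ and cannot be dropped from this formulation.

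By contrast, your first route (Burnside directly on the $\GL(V)$-action on $\Gr(V\Box V,m)$) has a genuine gap: Propositions~\ref{prop:largest-P}--\ref{prop:almost-scalar} bound $\nu_{P,Q}$, the codimension of the eigenvalue-$1$ fixed space of the \emph{linear} action on $V\Box V\otimes W$, whereas Burnside on the Grassmannian requires counting $P$-\emph{invariant} $m$-dimensional subspaces of $V\Box V$, and invariant subspaces need not consist of fixed vectors (any subspace of an eigenspace for an eigenvalue $\neq 1$ is invariant yet meets $\Fix(P)$ trivially). So those propositions cannot be reused verbatim, and that route would need a separate analysis of invariant-subspace counts; the accompanying suggestion that $K$ could then be omitted is likewise unsubstantiated. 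Since you ultimately rely on the tensor-level reduction, the proposal as a whole does reach a correct proof along the paper's own lines.
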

Theorem~\ref{thm:space-technical} implies Theorem~\ref{thm:space} by taking $\Box=\wedge$ and noting that $g(V\Box V, m)/(K\cdot \frac{|\Gr(V\Box V, m)|}{|\PGL(V)|})\to 1$ as $n\to \infty$. For the furthermore part of Theorem~\ref{thm:space} (for $n, q\to\infty$), we can use the fact that $K\leq 1+\frac{1}{q-2}$.
\begin{proof}
	Let $O$ be the set of orbits of $V\Box V\otimes W$ under $\GL(V)\times\GL(W)$. Let $O'$ be the set of orbits of $\Gr(V\Box V, m)$ under $\GL(V)$. 
	By Theorem~\ref{thm: tensors}, $|O|\leq (q-1+\frac{1}{q^{\Omega(n^2)}})\cdot \frac{q^{dm}}{|\GL(V)|\cdot|\GL(W)|}$, where $d=\dim(V\Box V)$. Note that every $t\in V\Box V\otimes W$ naturally gives rise to $U_t\in \Gr(V\Box V, \leq m)$, which consists of subspaces of $V\Box V$ of dimension $\leq m$. 
    Let $(V\Box V\otimes W)^\fdim$ be the set of $t\in V\Box V\otimes W$ such that $U_t\in \Gr(V\Box V, m)$. Let $O^\fdim$ be the set of orbits of $(V\Box V\otimes W)^\fdim$ under $\GL(V)\times\GL(W)$. It is clear that $O^\fdim\subseteq O$, so 
	\begin{equation}\label{eq:gr1}
		|O^\fdim|\leq |O|\leq (q-1+\frac{1}{q^{\Omega(n^2)}})\cdot \frac{q^{dm}}{|\GL(V)|\cdot|\GL(W)|}=(1+\frac{1}{q^{\Omega(n^2)}})\cdot \frac{q^{dm}}{|\PGL(V)|\cdot|\GL(W)|}.
	\end{equation}
	
	Then observe that $t_1,t_2\in (V\Box V\otimes W)^\fdim$ are in the same orbit under $\GL(V)\times\GL(W)$ if and only if $U_{t_1}$ and $U_{t_2}$ are in the same orbit of $\GL(V)$ acting on $\Gr(V\Box V, m)$. It follows that $|O'|=|O^\fdim|$. Therefore, 
	\begin{equation}\label{eq:gr2}
	q^{dm}/|\GL(W)|\leq K\cdot q^{dm-m^2}\leq K\cdot |\Gr(V\Box V, m)|
	\end{equation}
	where $K=q^{m^2}/|\GL(m,q)|$. Plugging Equation~\eqref{eq:gr2} into Equation~\eqref{eq:gr1} we get the desired result. 
\end{proof}
For $V\cong\F_q^n$, we have correspondences between orbits of $\Gr(V\otimes V,m)$ under the natural action of $\GL(V)$ and orbits of $\Gr(\M(n,q),m)$ under the congruence action of $\GL(n, q)$, as well as between $\Gr(V\odot V,m)$ and $\Gr(\S(n,q),m)$, and $\Gr(V\wedge V,m)$ and $\Gr(\Lambda(n,q),m)$.
Then Theorem~\ref{thm:space-technical} yields the following corollary, which covers Corollary~\ref{cor:fraction}.
\begin{corollary}\label{cor:fraction-technical}
Suppose $m=Cn+R$ and let $K=q^{m^2}/|\GL(m,q)|$. When $n$ is large enough, all but $\frac{1}{q^{\Omega(n^2)}}$ fraction of $m$-dimensional subspaces of $\Lambda(n, q)$ (resp.\ $\M(n, q)$, $\S(n, q)$) have their automorphism group order upper bounded by $K\cdot (q-1)$.
\end{corollary}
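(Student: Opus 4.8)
The plan is to count the $m$-dimensional subspaces $\cA\subseteq V\Box V$ that admit a \emph{non-scalar} automorphism, i.e.\ an invertible $P\in\GL(V)$ with $\trans{P}\cA P=\cA$ that is not a scalar matrix. Since the $q-1$ scalar matrices always lie in $\Aut(\cA)$ and $K\ge 1$, every $\cA$ with $|\Aut(\cA)|>K(q-1)$ satisfies $|\Aut(\cA)|>q-1$ and hence has a non-scalar automorphism, so it suffices to bound the number of such $\cA$. Writing $\Fix_{\Gr}(P):=\{\cA\in\Gr(V\Box V,m):\trans{P}\cA P=\cA\}$, the set of subspaces admitting a non-scalar automorphism is exactly $\bigcup_{P\ \text{non-scalar}}\Fix_{\Gr}(P)$, so the task reduces to proving $\sum_{P\in\GL(V)\ \text{non-scalar}}|\Fix_{\Gr}(P)|\le q^{-\Omega(n^2)}\cdot|\Gr(V\Box V,m)|$.

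The bridge to Theorem~\ref{thm: tensors} is a lifting estimate for a single $P$. Call $\varphi\in V\Box V\otimes W$ \emph{full rank} if $\dim\cA_\varphi=m$; then the fibre of $\varphi\mapsto\cA_\varphi$ over any $\cA\in\Gr(V\Box V,m)$ has size exactly $|\GL(m,q)|$ (these $\varphi$ are precisely the isomorphisms $W\xrightarrow{\sim}\cA$). Moreover --- this is the correspondence already used in the proof of Theorem~\ref{thm:space-technical} --- if $\trans{P}\cA_\varphi P=\cA_\varphi$ then there is a \emph{unique} $Q\in\GL(W)$ with $(P,Q)\circ\varphi=\varphi$, namely the one induced by $\rho(P)^{-1}|_{\cA_\varphi}$ under the identification $W\cong\cA_\varphi$ (where $\rho(P)\colon M\mapsto\trans{P}MP$). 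Hence $|\GL(m,q)|\cdot|\Fix_{\Gr}(P)|=\#\{\varphi\ \text{full rank}:\trans{P}\cA_\varphi P=\cA_\varphi\}\le\bigl|\bigcup_{Q\in\GL(W)}\Fix(P,Q)\bigr|\le\sum_{Q\in\GL(W)}|\Fix(P,Q)|=q^{dm}\sum_{Q\in\GL(W)}q^{-\nu_{P,Q}}$, where $d=\dim(V\Box V)$ and $|\Fix(P,Q)|=q^{dm-\nu_{P,Q}}$. Since $q^{dm}/|\GL(m,q)|=K\cdot q^{dm-m^2}\le K\binom{d}{m}_q=K\cdot|\Gr(V\Box V,m)|$, this gives $|\Fix_{\Gr}(P)|\le K\cdot|\Gr(V\Box V,m)|\cdot\sum_{Q\in\GL(W)}q^{-\nu_{P,Q}}$ for every $P$.

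Summing over non-scalar $P$ and invoking the central estimate in the proof of Theorem~\ref{thm: tensors} (Equation~\eqref{eq:fix_sum}), namely $\sum_{(P,Q)\in\GL(V)\times\GL(W)}q^{-\nu_{P,Q}}\le q-1+q^{-\Omega(n^2)}$, we note that the main term $q-1$ is accounted for by the $q-1$ pairs of the form $(\alpha\id_V,\beta\id_W)$ that fix every tensor (each with $\nu_{P,Q}=0$), whose first components are scalar. Subtracting these off, $\sum_{(P,Q):\,P\ \text{non-scalar}}q^{-\nu_{P,Q}}\le\bigl(q-1+q^{-\Omega(n^2)}\bigr)-(q-1)=q^{-\Omega(n^2)}$, and therefore $\sum_{P\ \text{non-scalar}}|\Fix_{\Gr}(P)|\le K\cdot|\Gr(V\Box V,m)|\cdot q^{-\Omega(n^2)}$. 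As $K$ depends only on the (fixed) prime power $q$, this is precisely the bound needed in the first paragraph. The whole argument is uniform over $\Box\in\{\otimes,\odot,\wedge\}$ (the cited results handle the three cases together), and passing through the correspondences $\Gr(V\otimes V,m)\leftrightarrow\Gr(\M(n,q),m)$, $\Gr(V\odot V,m)\leftrightarrow\Gr(\S(n,q),m)$, $\Gr(V\wedge V,m)\leftrightarrow\Gr(\Lambda(n,q),m)$ together with the matching of automorphism groups recorded just above the statement yields Corollary~\ref{cor:fraction-technical}; Corollary~\ref{cor:fraction} is the special case $\Box=\wedge$.

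The step I expect to be the main obstacle is the lifting estimate of the second paragraph: one must set up the counting so that each $\rho(P)$-invariant $m$-dimensional subspace contributes exactly $|\GL(m,q)|$ full-rank tensors, each fixed by $(P,Q)$ for a \emph{single} $Q\in\GL(W)$, and verify that replacing this by $\sum_{Q}|\Fix(P,Q)|$ (which also sweeps in non-full-rank tensors) does not wash out the $q^{-\Omega(n^2)}$ gain. It is worth stressing that the coarser orbit-count bound of Theorem~\ref{thm:space-technical} does \emph{not} suffice on its own --- unwinding it yields only a constant-fraction bound on the exceptional subspaces --- so one genuinely has to return to the per-pair fixed-point estimates $q^{-\nu_{P,Q}}$ and exploit that their total exceeds $q-1$ by at most $q^{-\Omega(n^2)}$.
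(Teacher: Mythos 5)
Your proposal is correct, but it takes a genuinely different route from the paper's. The paper deduces Corollary~\ref{cor:fraction-technical} from Theorem~\ref{thm:space-technical} alone: by Burnside's lemma that theorem bounds the \emph{average} of $|\Aut(\cA)|$ over $\cA\in\Gr(\Lambda(n,q),m)$ by $K(q-1+q^{-\Omega(n^2)})$, and the paper then runs a short Markov-type contradiction in which the non-exceptional subspaces are assigned the floor $K(q-1)$. You instead re-open the proof of Theorem~\ref{thm: tensors}: you bound the number of subspaces admitting a non-scalar automorphism by $\sum_{P\ \text{non-scalar}}|\Fix_{\Gr}(P)|$, convert each $|\Fix_{\Gr}(P)|$ into tensor fixed points via the full-rank lifting (each invariant $m$-dimensional $\cA$ carries exactly $|\GL(m,q)|$ full-rank tensors, each fixed by some $(P,Q)$), and then use that $\sum_{(P,Q)}q^{-\nu_{P,Q}}$ exceeds $q-1$ --- contributed entirely by pairs with scalar first component --- by at most $q^{-\Omega(n^2)}$ (Equation~\eqref{eq:fix_sum}). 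All your individual steps check out: the fibre count $|\GL(m,q)|$, the existence (and uniqueness) of the lifted $Q$, the estimate $q^{m(d-m)}\le\binom{d}{m}_q$, the uniform bound $K=O(1)$, and the argument is uniform over $\Box\in\{\otimes,\odot,\wedge\}$. What your route buys is twofold: a strictly stronger conclusion (outside a $q^{-\Omega(n^2)}$ fraction the automorphism group consists of scalars only, so has order exactly $q-1$), and robustness --- your closing caution is well taken, since the only a priori floor is $|\Aut(\cA)|\ge q-1$ rather than $K(q-1)$, and with that floor the pure averaging argument bounds the exceptional fraction only by roughly $K-1=\Theta(1/q)$, a constant; so the per-pair fixed-point refinement you carry out (or something equivalent) is genuinely needed to reach $q^{-\Omega(n^2)}$, and in that sense your argument supplies a step that the paper's shorter averaging proof glosses over. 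What the paper's route buys is brevity: it reuses Theorem~\ref{thm:space-technical} as a black box and never returns to the conjugacy-class analysis.
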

\begin{proof}
	By Theorem~\ref{thm:space-technical}, there exists a constant $C$, such that when $n$ is large enough, the average order of $\Aut(\cA)$ over $\cA\in \Gr(\Lambda(n, q),m)$ is upper bounded by $K\cdot(q-1+\frac{1}{q^{Cn^2}})$. 
	
	Take any constant $0<D<C$. We claim that all but at most $1/q^{Dn^2}$ fraction of $m$-dimensional subspaces $\cA$ of $\Lambda(n, q)$ have $\Aut(\cA)\leq K\cdot (q-1)$. If not, we have at least $1/q^{Dn^2}$ fraction of subspaces have their automorphism groups of order $\geq K\cdot(q-1)+1$. This gives that the average order is at least $K\cdot(q-1)\cdot (1-1/q^{Dn^2})+(K\cdot (q-1)+1)\cdot 1/q^{Dn^2}=K\cdot(q-1)+1/q^{Dn^2}=K\cdot(q-1+1/q^{Dn^2+2})$. Here we make use of the fact that $K<1/0.288788<4$. As we have set $D<C$,  $K\cdot(q-1+1/q^{Dn^2+2})>K\cdot(q-1+1/q^{Cn^2})$ for large enough $n$, a contradiction to Theorem~\ref{thm:space-technical}. 
\end{proof}

\section{Application to finite group enumeration}\label{sec:aux}
\subsection{From groups to matrix spaces}

In \cite{Hig60}, Higman observed the following for $\higp$ when $p>2$. Let $\higp(n, m)$ be the set of (isomorphic classes of) $p$-groups of Frattini class $2$, with the Frattini quotient isomorphic to $\Z_p^n$, and the Frattini subgroup isomorphic to $\Z_p^m$. Let $f_{\higp(n, m)}$ be the number of groups in $\higp(n, m)$. 
\begin{proposition}[{\cite[Theorem 2.2]{Hig60}}]\label{prop:higp_lin}
	Let $p>2$ be a prime. Let $V\cong \F_p^n$ and let $V\wedge V$ be its exterior square. Then $f_{\higp(n, m)}$ is equal to the number of isomorphism classes of dimension-$m$ subspaces of $V\wedge V\oplus V$ under $\GL(V)$.
\end{proposition}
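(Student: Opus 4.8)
The plan is to realise every group in $\higp(n,m)$ as a quotient of one fixed ``universal'' group, and then to translate isomorphism of these quotients into the $\GL(V)$-action on subspaces of a single $\GL(V)$-module, which will turn out to be exactly $V\wedge V\oplus V$. First I would take $F=F_n$, the relatively free group on generators $x_1,\dots,x_n$ in the variety defined by the identities $g^{p^2}=\id$, $[g,h]^p=\id$, $[g,[h,k]]=\id$; this is a finite $p$-group with $F/\Phi(F)\cong\Z_p^n=:V$. The key computation is to identify $\Phi(F)=F^p[F,F]$ as a $\GL(V)$-module. Since $F$ has class $2$ the commutator map is bilinear, and because $[F,F]$ is central of exponent $p$ it descends to a surjection $V\wedge V\to[F,F]$ which, in the free object, is an isomorphism. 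For the power map I would use the class-$2$ identity $(ab)^p=a^pb^p[b,a]^{\binom p2}$ together with $p>2$: then $\binom p2=p\cdot\tfrac{p-1}{2}\equiv 0\pmod p$ and $[b,a]^p=\id$, so $(ab)^p=a^pb^p$; hence $g\mapsto g^p$ is a homomorphism $F\to F^p$ which, as $(\Phi F)^p=\id$, factors through a \emph{linear} map $V\to F^p$, again an isomorphism in the free object and there independent of $[F,F]$. This yields a $\GL(V)$-module isomorphism $\Phi(F)\cong (V\wedge V)\oplus V$, with $\GL(V)$ acting by the induced action on $V\wedge V$ and standardly on $V$. This is the single place where $p>2$ is used: for $p=2$ the squaring map is quadratic rather than linear, which is precisely what separates $\twop$ from $\higp$.

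Next I would set up the bijection between marked groups and subspaces. Given $G\in\higp(n,m)$, pick $g_1,\dots,g_n\in G$ lifting a basis of $G/\Phi(G)$; by the universal property of $F$ this extends to a surjection $\pi\colon F\to G$, and since $\pi$ induces an isomorphism $F/\Phi(F)\to G/\Phi(G)$ we get $\ker\pi\subseteq\Phi(F)$. As the Frattini subgroup of a finite $p$-group is the verbal subgroup $(\cdot)^p[\cdot,\cdot]$, surjections send Frattini onto Frattini, so $\pi(\Phi(F))=\Phi(G)$ and $\Phi(F)/\ker\pi\cong\Phi(G)\cong\Z_p^m$; since $\Phi(F)$ is elementary abelian, $\ker\pi$ is an $\F_p$-subspace of $\Phi(F)$ of codimension $m$. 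Conversely every codimension-$m$ subspace $K\le\Phi(F)$ gives $F/K\in\higp(n,m)$: the variety is quotient-closed, $\Phi(F/K)=\Phi(F)/K$ has dimension $m$, and the Frattini quotient is still $\Z_p^n$. Hence pairs (a group $G\in\higp(n,m)$ together with an ordered generating tuple modulo $\Phi$) correspond bijectively to codimension-$m$ subspaces of $\Phi(F)\cong V\wedge V\oplus V$.

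Then I would forget the marking. Two quotients $F/K$ and $F/K'$ are abstractly isomorphic iff $K'=\phi(K)$ for some $\phi\in\Aut(F)$: an isomorphism $F/K\to F/K'$ lifts (lift the images of the generators along $F\to F/K'$) to an endomorphism of $F$ that is surjective, hence, $F$ being finite, an automorphism, which then carries $K$ onto $K'$. Every $\phi\in\Aut(F)$ induces an element of $\GL(V)$, the map $\Aut(F)\to\GL(V)$ is surjective, and the induced action of $\Aut(F)$ on $\Phi(F)$ depends only on the image in $\GL(V)$ and is the natural action on $(V\wedge V)\oplus V$: if $x_i\mapsto y_iz_i$ with $z_i\in\Phi(F)$, then $[y_iz_i,y_jz_j]=[y_i,y_j]$ and $(y_iz_i)^p=y_i^p$, so the $\Phi$-part of a change of generators acts trivially on $\Phi(F)$. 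It follows that $f_{\higp(n,m)}$ equals the number of $\GL(V)$-orbits on codimension-$m$ subspaces of $V\wedge V\oplus V$. Finally, the $\GL(V)$-equivariant perp map $K\mapsto K^{\perp}$ into $(V\wedge V\oplus V)^{*}\cong (V^{*}\wedge V^{*})\oplus V^{*}$ turns codimension-$m$ subspaces into $m$-dimensional ones, and $\GL(V)$ acting contragrediently on $V^{*}$ is $\GL(V^{*})$ acting standardly; renaming $V^{*}$ as $V$ gives the statement exactly as quoted.

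The main obstacle I expect is the first step: pinning down $\Phi(F)$ as the module $V\wedge V\oplus V$ with the correct $\GL(V)$-action. This is where $p>2$ genuinely enters (via $\binom p2\equiv 0\pmod p$), and where one must establish both that the $p$-th power map becomes linear and that in the relatively free group $F^p$ and $[F,F]$ meet trivially and have the expected dimensions $n$ and $\binom n2$ — i.e.\ that $F$ really has the ``largest possible'' Frattini subgroup. The bookkeeping in the de-marking step, checking that $\Aut(F)$ acts on $\Phi(F)$ through $\GL(V)$ with no lower-order corrections, is routine but is the other place requiring care.
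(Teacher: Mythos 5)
Your proposal is correct and follows essentially the same route as the paper, which simply cites Higman's Theorem 2.2 (and the exposition in \cite[Lemmas 4.1 and 4.3]{BNV07}) for exactly this argument: realise every group in $\higp(n,m)$ as $F_{\Phi\text{-}2,p,n}/K$ with $K\leq \Phi(F_{\Phi\text{-}2,p,n})$, identify $\Phi(F_{\Phi\text{-}2,p,n})\cong V\wedge V\oplus V$ as a $\GL(V)$-module using $p>2$ (so that $P$ and $D$ are both invariant and the power map is linear), reduce isomorphism of quotients to the $\GL(V)$-action on codimension-$m$ subspaces, and then dualise to pass to dimension-$m$ subspaces. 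The one step you leave unproved --- that $x_i^p$ and $[x_i,x_j]$ satisfy no nontrivial relations in the relatively free group, so $\Phi(F_{\Phi\text{-}2,p,n})$ has dimension $\binom{n}{2}+n$ --- is the same standard fact the paper also only cites (\cite[pp.~24]{BNV07}), so this is not a gap in substance.
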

The original formulation of \cite[Theorem 2.2]{Hig60} was for \emph{co}dimension-$m$ subspaces. Proposition~\ref{prop:higp_lin} is obtained by examining the action of $\GL(V)$ on the dual space of $V\wedge V\oplus V$.

Similarly, let $\twop(n, m)$ be the set of (isomorphic classes of) $p$-groups of class $2$ and exponent $p$, with the commutator quotient isomorphic to $\Z_p^n$, and the commutator subgroup isomorphic to $\Z_p^m$. Let $f_{\twop(n, m)}$ be the number of groups in $\twop(n, m)$. The following is an easy consequence of the method as in \cite{Hig60}.
\begin{proposition}\label{prop:twop_lin}
	Let $p>2$ be a prime. Let $V\cong \F_p^n$ and $V\wedge V$ be its exterior square. Then $f_{\twop(n, m)}$ is equal to the number of isomorphism  classes of dimension-$m$ subspaces of $V\wedge V$ under $\GL(V)$.
\end{proposition}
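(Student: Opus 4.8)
The plan is to run the Baer correspondence between $p$-groups of class $2$ and exponent $p$ (for $p>2$) and surjective alternating bilinear forms, exactly as in Higman's treatment of the $\higp$ case in Proposition~\ref{prop:higp_lin} — the present situation being the simpler one, where no extra ``linear part'' (the summand $V$ in $V\wedge V\oplus V$) occurs — and then to dualize, as in the remark following Proposition~\ref{prop:higp_lin}, in order to pass from codimension-$m$ to dimension-$m$ subspaces. First I would set up the dictionary. Fix $p>2$ and let $G\in\twop(n,m)$. The subgroup $[G,G]$ is characteristic; since $G$ has exponent $p$, both $[G,G]\cong\Z_p^m$ and $G/[G,G]\cong\Z_p^n$ are elementary abelian, hence $\F_p$-vector spaces $W$ and $V$ of dimensions $m$ and $n$. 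Because $G$ has class $2$, the commutator map $G\times G\to[G,G]$ is bi-additive and alternating, so it descends to an alternating bilinear map $\beta_G\colon V\times V\to W$, whose image spans $W$ since $[G,G]$ is generated by commutators; equivalently $\beta_G$ is a surjective linear map $\hat\beta_G\colon V\wedge V\twoheadrightarrow W$. Conversely, any surjective linear $\psi\colon V\wedge V\to W$ yields, via the Baer correspondence (this is where $p\neq 2$ enters, to invert $2$), a group $G_\psi\in\twop(n,m)$ — for instance realized on $V\times W$ with product $(v_1,w_1)(v_2,w_2)=(v_1+v_2,\,w_1+w_2+\tfrac{1}{2}\psi(v_1\wedge v_2))$ — satisfying $\beta_{G_\psi}=\psi$.

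Next I would pass from isomorphism classes to orbits. Since $[G,G]$ is characteristic, any isomorphism $G_1\to G_2$ restricts to $[G_1,G_1]\to[G_2,G_2]$ and descends modulo commutators, producing a pair $(P,Q)\in\GL(V)\times\GL(W)$ that intertwines $\beta_{G_1}$ and $\beta_{G_2}$; conversely such a pair lifts (uniquely up to an immaterial term $V\to W$) to an isomorphism $G_{\beta_{G_1}}\to G_{\beta_{G_2}}$. Hence $f_{\twop(n,m)}$ equals the number of $\GL(V)\times\GL(W)$-orbits of surjective linear maps $V\wedge V\to W$. Now a surjective linear map with $m$-dimensional target, taken up to the $\GL(W)$-action on the target, is exactly the datum of its kernel, a codimension-$m$ subspace of $V\wedge V$, and this identification is $\GL(V)$-equivariant; so $f_{\twop(n,m)}$ equals the number of $\GL(V)$-orbits of codimension-$m$ subspaces of $V\wedge V$.

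Finally I would dualize. Annihilation $U\mapsto U^{\circ}\subseteq(V\wedge V)^*$ is a dimension-reversing, $\GL(V)$-equivariant bijection between subspaces of $V\wedge V$ and subspaces of $(V\wedge V)^*$, where $\GL(V)$ acts on the dual through the contragredient $P\mapsto P^{-\mathrm{t}}$. Since $P\mapsto P^{-\mathrm{t}}$ is an automorphism of $\GL(V)$ and $(V\wedge V)^*\cong V\wedge V$ compatibly with it, precomposing the action by this automorphism leaves orbit counts unchanged. Therefore the number of $\GL(V)$-orbits of codimension-$m$ subspaces of $V\wedge V$ equals the number of $\GL(V)$-orbits of dimension-$m$ subspaces of $V\wedge V$, which is the asserted count.

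The only nontrivial point — the expected main obstacle — is the group-theory/linear-algebra dictionary of the first two steps: one must verify that the Baer correspondence is genuinely a bijection on isomorphism classes (precisely where the hypothesis $p>2$ is needed, and why $p=2$ is excluded) and that ``surjective linear map modulo target automorphisms $\leftrightarrow$ kernel'' is arranged $\GL(V)$-equivariantly. Everything else, including the dualization, is bookkeeping identical to that already carried out for Proposition~\ref{prop:higp_lin}, here applied to $V\wedge V$ rather than $V\wedge V\oplus V$.
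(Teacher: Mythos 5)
Your proposal is correct and follows essentially the same route the paper intends: Higman's dictionary identifying groups in $\twop(n,m)$ with $\GL(V)$-orbits of codimension-$m$ subspaces of $V\wedge V$ (you realize it via the explicit Baer construction rather than via quotients of the relatively free exponent-$p$ class-$2$ group as in \cite{Hig60,BNV07}, which is only a presentational difference), followed by the same dualization/contragredient argument to pass from codimension-$m$ to dimension-$m$ subspaces. The one point you flag but do not prove --- that for $p>2$ every $G\in\twop(n,m)$ is isomorphic to the Baer group of its commutator map --- is exactly the classical fact the paper also takes for granted, so there is no gap relative to the paper's own treatment.
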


Propositions~\ref{prop:higp_lin} and \ref{prop:twop_lin} give the following approach for Theorem~\ref{thm:group-enum}. Note that 
\[
f_\higp(p^\ell)=\sum_{1\leq n\leq \ell}f_{\higp(n, \ell-n)}
\]
and
\[
f_\twop(p^\ell)=\sum_{1\leq n\leq \ell}f_{\twop(n, \ell-n)}.
\]
If we can prove that $\max_{n\in[\ell]} f_{\higp(n, \ell-n)}=p^{\frac{2}{27}\ell^3-\frac{4}{9}\ell^2+\Theta(\ell)}$ and $\max_{n\in[\ell]} f_{\twop(n, \ell-n)}=p^{\frac{2}{27}\ell^3-\frac{2}{3}\ell^2+\Theta(\ell)}$, then Theorem~\ref{thm:group-enum} would follow. To bound $f_{\higp(n, \ell-n)}$ and $f_{\twop(n, \ell-n)}$, by Proposition~\ref{prop:higp_lin} and Proposition~\ref{prop:twop_lin} we can examine the action of $\GL(V)$ on subspaces of $V\wedge V\oplus V$ and  $V\wedge V$. 

\subsection{Enumerating orbits of \texorpdfstring{$\Gr(V\wedge V\oplus V,m)$}{}}
Theorem~\ref{thm: tensors} provides a bound on the number of orbits of $V\wedge V$ under the action of $\GL(V)$. Now we derive a similar bound on the number of orbits of $V\wedge V\oplus V$ under the action of $\GL(V)$.
\begin{theorem}\label{thm: tensors_frattini}
	Suppose $V\cong \F_q^n$ and $W\cong \F_q^m$, where $m=\lceil C\cdot n+R\rceil$ for constants $C$ and $R$. Let $K=q^{m^2}/|\GL(m, q)|$. Then 
	$$g((V\wedge V\oplus V)\otimes W)\leq (q-1+\frac{1}{q^{\Omega(n^2)}})\cdot \frac{q^{(\binom{n}{2}+n)\cdot m}}{|\GL(V)|\cdot|\GL(W)|},$$ and 
	$$g(V\wedge V\oplus V, m)\leq K\cdot (q-1+\frac{1}{q^{\Omega(n^2)}})\cdot \frac{|\Gr(V\wedge V\oplus V, m)|}{|\GL(V)|}.$$
\end{theorem}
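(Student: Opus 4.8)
The plan is to mirror the proof of Theorem~\ref{thm: tensors} almost verbatim, treating $V\wedge V\oplus V$ in place of $V\Box V$. The key structural observation is that all the propositions in Section~\ref{subsec:thm-tensors} (Propositions~\ref{prop:largest-P}--\ref{prop:almost-scalar}) construct an explicit subspace $S$ of ``non-fixable'' vectors of the form $a_i \Box a_j \otimes b_k$ (or $c_i \otimes b_k$) and derive that $\nu_{P,Q}$ is considerably larger than $n^2+m^2$. Since $\dim(V\wedge V \oplus V) = \binom{n}{2}+n$ is still $\Theta(n^2)$ — in fact still between $\tfrac12 n^2 - \tfrac12 n$ and $n^2$ — the same counting bounds go through with the same leading constants. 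Concretely, I would first set $d = \dim(V\wedge V\oplus V) = \binom{n}{2}+n$, note that the action of $(P,Q)$ on $(V\wedge V\oplus V)\otimes W$ is the tensor of the action of $P$ on $V\wedge V\oplus V$ with the action of $Q$ on $W$, and that $P$ acts on $V\wedge V \oplus V$ through $(\wedge^2 \rho) \oplus \rho$ where $\rho$ is the defining representation. Then I re-run the case analysis: bounding the largest block of $P$ and $Q$, bounding the number of blocks of size $\geq 2$, bounding the number of distinct eigenvalues, showing a dominant eigenvalue exists, and finally handling the almost-scalar case. Each step reuses the corresponding proposition's argument with $V\Box V$ replaced by $V\wedge V\oplus V$; the constants ($1+C+1/C$, $2C^2+3$, etc.) are unchanged because they only depend on $C$ and on the quadratic-versus-quadratic comparison $(\text{coeff})\cdot C > 1+C^2$.

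The only genuinely new point is the contribution of the extra summand $V$ inside $V\wedge V \oplus V$, i.e. the ``$\oplus V$'' part. For the scalar case, if $P = \alpha I_n$ and $Q = \beta I_m$, then $(P,Q)$ acts on $V\wedge V\oplus V$ with eigenvalue $\alpha^2$ on the $\wedge^2$ part and eigenvalue $\alpha$ on the linear part; tensoring with $\beta^{-1}$ on $W$, a vector in the linear summand is fixed iff $\alpha\beta = 1$ and a vector in the exterior summand is fixed iff $\alpha^2\beta = 1$. Hence $\Fix(P,Q) = V\wedge V\oplus V)\otimes W$ requires \emph{both} $\alpha\beta=1$ and $\alpha^2\beta=1$, forcing $\alpha=1$ and $\beta=1$ — so only the single pair $(\alpha,\beta)=(1,1)$ contributes the full space. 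But this only makes the scalar contribution \emph{smaller} (the $q-1$ bound still holds trivially, and in fact one gets a better $1$ there), and for all other scalar pairs $\Fix(P,Q)$ is a proper subspace whose dimension is at most $\max(\binom{n}{2}, n)\cdot m = \binom{n}{2}m$ when exactly one of the two conditions holds, still giving $\nu_{P,Q} = \Omega(n^2)$. So the scalar case still contributes at most $q-1$ to the sum in~\eqref{eq:fix_sum} (and the theorem states $q-1+\tfrac{1}{q^{\Omega(n^2)}}$, so this is fine); one could even note it actually contributes less, but the weaker bound suffices.

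For the non-scalar cases the extra ``$\oplus V$'' summand is harmless: in every proposition the set $S$ of non-fixable vectors was built inside the $V\Box V$ part (now the $\wedge^2$ part), so it embeds directly into $(V\wedge V \oplus V)\otimes W$ and the lower bounds on $|S|$ are literally the same. In the almost-scalar case (Proposition~\ref{prop:almost-scalar}) the argument that one non-fully-scalar factor forces $\nu_{P,Q} = \Omega(n^2)$ again only uses the exterior part. Having re-established the three-case partition — $\cG_{\alpha,\beta}$ contributing $\leq q-1$, $\cG_{AS}$ with $|\conjc(P)|,|\conjc(Q)| \leq q^{O(n)}$ and $\nu_{P,Q} = \Omega(n^2)$ contributing $q^{-\Omega(n^2)}$, and $\cG_{NAS}$ with the trivial bound $q^{n^2+m^2}$ on class sizes and $\nu_{P,Q}\gg n^2+m^2$ contributing $q^{-\Omega(n^2)}$ — Burnside's lemma yields
\[
g((V\wedge V\oplus V)\otimes W) \leq \Big(q-1+\tfrac{1}{q^{\Omega(n^2)}}\Big)\cdot \frac{q^{(\binom{n}{2}+n)\cdot m}}{|\GL(V)|\cdot|\GL(W)|}.
\]
The second inequality of the theorem then follows from the first by exactly the argument of the proof of Theorem~\ref{thm:space-technical}: pass from tensors to full-dimensional subspaces via $t \mapsto U_t \in \Gr(V\wedge V\oplus V, m)$, observe $|O^{\fdim}| = |O'|$, absorb the $q-1$ into $\PGL(V)$, and use $q^{dm}/|\GL(W)| \leq K\cdot|\Gr(V\wedge V\oplus V, m)|$ with $K = q^{m^2}/|\GL(m,q)|$.

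I do not expect a serious obstacle: the main point is simply to verify that introducing the direct summand $V$ does not spoil any of the inequalities, and the one place it genuinely changes something — the scalar case — only improves the bound. The mild bookkeeping is checking that $d = \binom{n}{2}+n$ still satisfies $\tfrac12 n^2 - \tfrac12 n \leq d \leq n^2$ so that all the ``$\gg n^2+m^2$'' comparisons with their original constants remain valid, which is immediate.
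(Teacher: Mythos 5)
Your proposal is correct, but it takes a genuinely different (and heavier) route than the paper. The paper does not re-run the case analysis of Theorem~\ref{thm: tensors} at all: it proves the first inequality by a three-line reduction. Since $P$ acts on $V\wedge V\oplus V$ preserving the two summands, any fixed tensor $t=t_1+t_2$ with $t_1\in V\wedge V\otimes W$, $t_2\in V\otimes W$ has both components fixed, so
\[
|\Fix((P,Q),(V\wedge V\oplus V)\otimes W)|\;\leq\;|\Fix((P,Q),V\wedge V\otimes W)|\cdot |V\otimes W|,
\]
and summing over all $(P,Q)$ and invoking Theorem~\ref{thm: tensors} with $\Box=\wedge$ as a black box gives the bound $(q-1+\frac{1}{q^{\Omega(n^2)}})\cdot q^{\binom{n}{2}m}\cdot q^{nm}$; the second inequality then follows exactly as in Theorem~\ref{thm:space-technical}, as you also do. Your approach instead redoes Propositions~\ref{prop:largest-P}--\ref{prop:almost-scalar} with $V\Box V$ replaced by $V\wedge V\oplus V$; this is sound, because every span $S$ of non-fixed vectors in those arguments lives in the invariant subspace $V\wedge V\otimes W$ (or uses only the $W$-factor structure), so the lower bounds on $\nu_{P,Q}$ persist in the larger representation, and your treatment of the scalar case (full fixed space only for $\alpha=\beta=1$, the other scalar pairs giving $\nu_{P,Q}\geq nm$) is correct and even yields a slightly sharper scalar contribution of $1+q^{-\Omega(n^2)}$ rather than $q-1$. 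What the paper's reduction buys is brevity and robustness: one does not have to re-verify any constants or case distinctions, and the same ``fixed points factor through the summands'' idea is what gets adapted (via Fact~\ref{fact:dual}) to the $p=2$ case, where the induced action is only block-triangular and a literal re-run of the case analysis would require extra care. What your route buys is a marginally better constant in the scalar term, which is not needed for the stated bound or its downstream use.
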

\begin{proof}
	The first inequality is equivalent to the following due to Burnside's lemma:
	\begin{equation}\label{eq:fix_sum_frattini}
		\sum_{(P,Q)\in\GL(V)\times\GL(W)}|\Fix((P,Q), (V\wedge V\oplus V)\otimes W)|\leq (q-1+\frac{1}{q^{\Omega(n^2)}})\cdot q^{(\binom{n}{2}+n)\cdot m},
	\end{equation}
	where 
	$$\Fix((P,Q), (V\wedge V\oplus V)\otimes W)=\{t\in (V\wedge V\oplus V)\otimes W\mid (P,Q)\circ t=t\}.$$
	We first have the following simple observation.
	\begin{observation}\label{obs:frattini}
		For $P\in\GL(V)$ and $Q\in\GL(W)$,
		\[
		|\Fix((P,Q), (V\wedge V\oplus V)\otimes W)|\leq |\Fix((P,Q), V\wedge V\otimes W)|\cdot |V\otimes W|
		\]
	\end{observation}
	\begin{proof}
		Choose an arbitrary vector $t\in \Fix((P,Q), (V\wedge V\oplus V)\otimes W)$. Let $t=t_1+t_2$, where $t_1\in V\wedge V\otimes W$ and $t_2\in V\otimes W$. Recall that the action of $P$ on $V\wedge V\oplus V$ sends $(v_1\wedge v_2, v_3)\in V\wedge V\oplus V$ to $(Pv_1\wedge Pv_2, Pv_3)$. That is, the induced action of $P$ on $V\wedge V\oplus V$ splits into the actions of $P$ on the invariant subspaces of $V\wedge V$ and $V$. Therefore,
		\[
		(P,Q)\circ t_1+(P,Q)\circ t_2=(P,Q)\circ t=t=t_1+t_2.
		\]
		This implies that $t_1\in \Fix((P,Q), V\wedge V\otimes W)$ and $t_2\in\Fix((P,Q), V\otimes W)$. Since $|\Fix((P,Q), V\otimes W)|\leq |V\otimes W|$, the claimed result follows.
	\end{proof}
	Therefore, we have 
	\begin{eqnarray*}
		&&\sum_{(P,Q)\in\GL(V)\times\GL(W)}|\Fix((P,Q), (V\wedge V\oplus V)\otimes W)|\\
		&\leq &\sum_{(P,Q)\in\GL(V)\times\GL(W)}\big(|\Fix((P,Q), V\wedge V\otimes W)|\cdot |V\otimes W|\big)\\
		&=&\left(\sum_{(P,Q)\in\GL(V)\times\GL(W)}|\Fix((P,Q), V\wedge V\otimes W)|\right)\cdot |V\otimes W|\\
		&\leq & (q-1+\frac{1}{q^{\Omega(n^2)}})\cdot q^{\binom{n}{2}\cdot m}\cdot q^{nm}\\
		&=&(q-1+\frac{1}{q^{\Omega(n^2)}})\cdot q^{(\binom{n}{2}+n)\cdot m},
	\end{eqnarray*}
	where the second $\leq$ is due to Theorem~\ref{thm: tensors} by replacing $\Box$ by $\wedge$.
	
	The result for $g((V\wedge V\oplus V), m)$ follows by the above and the same argument in the proof of Theorem~\ref{thm:space} (proof in Section~\ref{subsec:cor-space}).
\end{proof}

\subsection{Asymptotic bounds on \texorpdfstring{$f_\twop$}{} and \texorpdfstring{$f_\higp$}{}}\label{subsec:group-enum}

In this subsection we prove Theorem~\ref{thm:group-enum}. Theorems~\ref{thm:space-technical} and~\ref{thm: tensors_frattini}, together with Propositions~\ref{prop:higp_lin} and~\ref{prop:twop_lin}, give the desired bounds in Theorem~\ref{thm:group-enum} for $\twop$ and $\higp$ where $p>2$. The case of $p=2$ for $p$-groups of Frattini class $2$ can be dealt with certain technical twists. 

In the following, we handle $\twop$, $\higp$ with $p>2$, and $\higp$ with $p=2$ one by one.

\paragraph{The case of $\twop$.}  Recall that $f_{\twop}(p^\ell)=\sum_{1\leq n\leq \ell}f_{\twop(n, \ell-n)}$. The following statement helps us to pin down the range of $n$ for which the maximum is achieved.

\begin{proposition}\label{prop:max_range}
	There exist an integer $B\in \N$ such that when $\ell>B$, $\max_{n\in[\ell]} f_{\twop(n, \ell-n)}$ is achieved at some $n$ satisfying $\frac{1}{2}\ell\leq n\leq \frac{5}{6}\ell$.
\end{proposition}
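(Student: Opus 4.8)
The plan is to use Proposition~\ref{prop:twop_lin} to identify $f_{\twop(n,\ell-n)}$ with the number of $\GL(V)$-orbits of $(\ell-n)$-dimensional subspaces of $V\wedge V$, where $V\cong\F_p^n$ and $\dim(V\wedge V)=\binom n2$, and then to control this orbit count up to the (comparatively tiny) error introduced by dividing out $|\GL(n,p)|$. Concretely, for every $n\in[\ell]$ with $0\le\ell-n\le\binom n2$ one has, from the two-sided bound $p^{m(D-m)}\le\binom Dm_p\le 4\,p^{m(D-m)}$ on Gaussian binomials (applied with $D=\binom n2$, $m=\ell-n$) together with the fact that every $\GL(V)$-orbit has size at most $|\GL(n,p)|<p^{n^2}$,
\[
\psi(n)-n^2\ \le\ \log_p f_{\twop(n,\ell-n)}\ \le\ \psi(n)+O(1),\qquad\text{where}\quad\psi(n):=(\ell-n)\Big(\tbinom n2-(\ell-n)\Big),
\]
while $f_{\twop(n,\ell-n)}=0$ whenever $\ell-n>\binom n2$, so such $n$ are irrelevant. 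Substituting $n=x\ell$ and expanding, $\psi(n)=g(x)\,\ell^3+O(\ell^2)$ \emph{uniformly} in $x\in[0,1]$, where $g(x):=\tfrac12 x^2(1-x)$ and the $O(\ell^2)$ absorbs the lower-order parts of $\binom n2$ and of $\ell-n$; note $-n^2$ is also $O(\ell^2)$. So everything reduces to the elementary calculus of $g$.

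Since $g'(x)=\tfrac12 x(2-3x)$, the function $g$ increases on $[0,\tfrac23]$ and decreases on $[\tfrac23,1]$, with $g(\tfrac23)=\tfrac2{27}$, $g(\tfrac12)=\tfrac1{16}$ and $g(\tfrac56)=\tfrac{25}{432}$; the decisive inequality is $\max\{g(\tfrac12),g(\tfrac56)\}=\tfrac1{16}=\tfrac{27}{432}<\tfrac{32}{432}=\tfrac2{27}$. I would pick $n_0:=\lfloor\tfrac23\ell\rfloor$, which for $\ell$ large lies in $[\tfrac12\ell,\tfrac56\ell]$ and satisfies $\ell-n_0\le\binom{n_0}2$; since $g$ is Lipschitz on $[0,1]$, $g(n_0/\ell)=\tfrac2{27}-O(1/\ell)$, so the lower bound above yields $\log_p f_{\twop(n_0,\ell-n_0)}\ge\tfrac2{27}\ell^3-O(\ell^2)$. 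For any $n$ \emph{outside} $[\tfrac12\ell,\tfrac56\ell]$, monotonicity of $g$ forces $g(n/\ell)\le\max\{g(\tfrac12),g(\tfrac56)\}=\tfrac1{16}$, so the upper bound gives $\log_p f_{\twop(n,\ell-n)}\le\tfrac1{16}\ell^3+O(\ell^2)$. As $\tfrac2{27}-\tfrac1{16}=\tfrac5{432}>0$, this $\Theta(\ell^3)$ gap dominates the $O(\ell^2)$ error terms for all $\ell$ larger than some threshold $B$, whence $f_{\twop(n,\ell-n)}<f_{\twop(n_0,\ell-n_0)}$ for every such $n$. Therefore no maximiser of $n\mapsto f_{\twop(n,\ell-n)}$ can lie outside $[\tfrac12\ell,\tfrac56\ell]$, which is exactly the claim.

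Nothing here is deep; the two points that need care are (i) checking that every $O(\ell^2)$ error — from $|\GL(n,p)|$, from $\binom n2$ versus $\tfrac12 n^2$, and from the expansion of $\psi$ — is genuinely uniform in $n$ over the whole range $n\in[1,\ell]$, so that a single $B$ works for all cases simultaneously, and (ii) the numerical verification that the interval endpoints $\tfrac12$ and $\tfrac56$ sit strictly below the maximum $\tfrac2{27}$ of $g$ (this is precisely what produces the positive gap constant $\tfrac5{432}$). I do not expect a genuine obstacle, and the same argument — with $V\wedge V$ replaced by $V\wedge V\oplus V$, which changes $\psi$ only by an $O(\ell^2)$ term — will handle the $\higp$ analogue.
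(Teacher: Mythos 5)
Your proposal is correct and follows essentially the same route as the paper: the paper likewise sandwiches $f_{\twop(n,\ell-n)}$ between $p^{m(\binom n2-m)-n^2}$ and $p^{m(\binom n2-m+1)}$ (citing Higman/BNV07 rather than rederiving via Gaussian binomials and orbit sizes), evaluates the cubic leading terms at $n\approx\tfrac23\ell,\ \tfrac12\ell,\ \tfrac56\ell$ to get exactly the comparison $\tfrac2{27}>\max\{\tfrac1{16},\tfrac{25}{432}\}$, and uses monotonicity outside $[\tfrac12\ell,\tfrac56\ell]$ to exclude maximisers there. The only cosmetic difference is that the paper argues monotonicity of the exact upper-bound function $h(n,\ell)$ in $n$, whereas you argue via unimodality of the limit profile $g(x)=\tfrac12x^2(1-x)$ with uniform $O(\ell^2)$ errors, which is equally valid.
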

\begin{proof}
	By \cite{Hig60} (see \cite[Theorem 19.3]{BNV07}), we have $p^{m(\binom{n}{2}-m)-n^2}\leq f_{\twop(n, \ell-n)}\leq p^{m(\binom{n}{2}-m+1)}$. Let $g(n, \ell):=p^{m(\binom{n}{2}-m)-n^2}$ and $h(n, \ell):=p^{m(\binom{n}{2}-m+1)}$, where $m=\ell-n$. 
	
	First, setting $n=\lceil \nicefrac{2}{3}\cdot\ell\rceil$, we see that $\log_p g(\lceil \nicefrac{2}{3}\cdot\ell\rceil, \ell)=\frac{2}{27}\ell^3+g'(\ell)$ where $g'(\ell)$ is a quadratic polynomial. 
	
	Second, setting $n=\lceil \nicefrac{1}{2}\cdot\ell\rceil$, we see that $\log_p h(\lceil \nicefrac{1}{2}\cdot\ell\rceil, \ell)=\frac{1}{16}\ell^3+h'(\ell)$ where $h'(\ell)$ is a quadratic polynomial. 
	
	Third, setting $n=\lfloor \nicefrac{5}{6}\cdot\ell\rfloor$, we see that $\log_p h(\lfloor \nicefrac{5}{6}\cdot\ell\rfloor, \ell)=\frac{25}{432}\ell^3+h''(\ell)$ where $h''(\ell)$ is a quadratic polynomial.
	
	The above suggest that when $\ell$ is large enough, $g(\lceil \nicefrac{2}{3}\cdot\ell\rceil, \ell)$ is larger than $h(\lceil \nicefrac{1}{2}\cdot\ell\rceil, \ell)$ and $h(\lfloor \nicefrac{5}{6}\cdot\ell\rfloor, \ell)$. Furthermore, it can be checked that $h$, when $\ell$ is fixed, is an increasing function from $n=0$ to $n=\lceil \nicefrac{1}{2}\cdot\ell\rceil$, and a decreasing function from $n=\lfloor \nicefrac{5}{6}\cdot\ell\rfloor$ to $\ell$. It follows that the maximum of $f_{\twop(n, \ell-n)}$ is achieved at some $\frac{1}{2}\ell\leq n\leq \frac{5}{6}\ell$.
\end{proof}

Proposition~\ref{prop:max_range} shows that $\max_{n\in[\ell]} f_{\twop(n, \ell-n)}$ is achieved at the range $\frac{1}{2}\ell\leq n\leq \frac{5}{6}\ell$. Within this range, $n$ is linearly correlated with $m=\ell-n$, so by Proposition~\ref{prop:twop_lin} and Theorem~\ref{thm:space-technical} (by letting $\Box=\wedge$), we have
\begin{eqnarray*}
	f_{\twop(n, \ell-n)} & = & g(\F_p^n\wedge \F_p^n, \ell-n) \\
	&\leq & K\cdot (p-1+\frac{1}{p^{\Omega(n^2)}})\cdot \frac{|\Gr(\F_p^n\wedge \F_p^n, m)|}{|\GL(n, p)|} \\
	&\leq & K\cdot (p-1+\frac{1}{p^{\Omega(n^2)}})\cdot \frac{p^{n^2}}{|\GL(n, p)|}\cdot \frac{p^{(\binom{n}{2}-m+1)\cdot m}}{p^{n^2}} \\
	& = & p^{\binom{n}{2}\cdot m-m^2-n^2+m+O(1)}.
\end{eqnarray*}
The question then becomes to maximise $\binom{n}{2}\cdot m-m^2-n^2+m$, where $m=\ell-n$. Following \cite{Hig60}, we set $n=(2\ell - \delta)/3$ and $m=(\ell+\delta)/3$ where $\delta\in\Q$ is a constant with $|\delta|<1$, as these maximise the coefficients of the cubic term. Putting these into $\binom{n}{2}\cdot m-m^2-n^2+m$, we get 
$$
\frac{2}{27}\ell^3-\frac{2}{3}\ell^2-\frac{1}{18}(\delta^2-3\delta-6)\ell+\frac{1}{54}(\delta^3-9\delta^2+18\delta).
$$
The coefficients of $\ell^3$ and $\ell^2$ are then determined. The coefficient of the linear term is easily seen to be upper bounded by a constant ($\frac{33}{72}$ in this case), so the result follows.

\paragraph{The case of $\higp$ with $p>2$.} When $p>2$, the statement for $f_{\higp}$ in Theorem~\ref{thm:group-enum} can be done similarly as a consequence of Theorem~\ref{thm: tensors_frattini} and Proposition~\ref{prop:higp_lin}. Indeed, we get $f_{\higp(n, m)}\leq p^{\binom{n}{2}\cdot m+nm-m^2-n^2+m+O(1)}$, and a similar optimisation (by setting $n=(2\ell - \delta)/3$ and $m=(\ell+\delta)/3$) gives the desired result.

\paragraph{The case of $\higtwo$, i.e. $\higp$ with $p=2$.} When $p=2$, we need to exploit the structure of $2$-groups of Frattini class $2$ in more detail. We first recall some results from \cite{Hig60}, though we shall mostly refer to \cite{BNV07} due to its more detailed exposition. 

Let $F_{\Phi\text{-}2, p, n}$ be the relatively free $p$-group of Frattini class $2$ with $n$ generators. Let $x_1, \dots, x_n$ be a set of generators of $F_{\Phi\text{-}2, p, n}$. Let $P$ be the group generated by $x_i^2$, $i\in[n]$. Let $D$ be the commutator subgroup generated by $[x_i, x_j]$, $1\leq i<j\leq n$. By \cite[pp. 24]{BNV07}, $x_i^p$ and $[x_i,x_j]$ form a minimal generating set of $\Phi(F_{\Phi\text{-}2, p, n})$; that is, there is no non-trivial relation between them. Therefore, $\Phi(F_{\Phi\text{-}2, p, n})\cong \Z_p^{\binom{n}{2}+n}$, and $F_{\Phi\text{-}2, p, n}/\Phi(F_{\Phi\text{-}2, p, n})\cong \Z_p^{n}$. In particular, we can identify $\Phi(F_{\Phi\text{-}2, p, n})$ as a vector space over $\F_p$, and subgroups of $\Phi(F_{\Phi\text{-}2, p, n})$ correspond to subspaces of $\F_p^{\binom{n}{2}+n}$.

By \cite[Lemma 4.1]{BNV07}, any $p$-group of Frattini class $2$ is isomorphic to a quotient group of $F_{\Phi\text{-}2, p, n}$ with respect to a subgroup of $\Phi(F_{\Phi\text{-}2, p, n})$. Let $G_1$ and $G_2$ be two $p$-groups of Frattini class $2$, with $G_i/\Phi(G_i)\cong \Z_p^n$ for $i=1, 2$, so for $i\in[2]$, $G_i\cong F_{\Phi\text{-}2, p, n}/N_i$ where $N_i\leq \Phi(F_{\Phi\text{-}2, p, n})$. By \cite[Lemma 4.3]{BNV07}, $G_1$ and $G_2$ are isomorphic if and only if there exists $T\in\Aut(F_{\Phi\text{-}2, p, n}/\Phi(F_{\Phi\text{-}2, p, n}))\cong \GL(n, p)$ such that the induced action of $T$ sends $N_1$ to $N_2$ as subspaces of $\Phi(F_{\Phi\text{-}2, p, n})$. 

We then need to examine the induced action of $\Aut(F_{\Phi\text{-}2, p, n}/\Phi(F_{\Phi\text{-}2, p, n}))$ on $\Phi(F_{\Phi\text{-}2, p, n})$. When $p>2$, the situation is easier, as $P$ and $D$ are both invariant under this action, and can be seen equivalent to the natural action $\GL(V)$ on $V\wedge V\oplus V$ where $V\cong\F_p^n$ \cite[Theorem 2.2]{Hig60}. 

When $p=2$, we have $(x_ix_j)^2=x_i^2x_j^2[x_j,x_i]$. As $[x_j,x_i]=[x_i,x_j]^{-1}=[x_i,x_j]$ (using the fact that in $F_{\Phi\text{-}2, 2, n}$, $[x_i,x_j]^2=\id$), this leads to $(x_ix_j)^2=x_i^2x_j^2[x_i,x_j]$. That is, $P$ is no longer invariant under this action. Fortunately, the commutator group $D$ is still invariant under this action, and the induced action on $D$ is still equivalent to the natural action of $\GL(n, 2)$ on $\F_2^n\wedge \F_2^n$. 

For convenience, we write the group operations in $\Phi(F_{\Phi\text{-}2, 2, n})$ in the additive notation. It is identified with $\F_2^{\binom{n}{2}+n}$ (the linear space of length-$(\binom{n}{2}+n)$ column vectors over $\F_2$), with an ordered linear basis being $(x_1^2, \dots, x_n^2, [x_1, x_2], \dots, [x_{n-1}, x_n])$. Let $P=\linspan\{x_i^2\mid i\in[n]\}$ and $D=\linspan\{[x_i,x_j]\mid 1\leq i<j\leq n\}$. The induced action of $\Aut(F_{\Phi\text{-}2, 2, n}/\Phi(F_{\Phi\text{-}2, 2, n}))\cong\GL(n, 2)$ on $\Phi(F_{\Phi\text{-}2, 2, n})$ can be verified as a linear action, with $D$ being an invariant subspace. That is, let $\tilde A$ be the induced action of $P\in\GL(n, 2)$ on $\Phi(F_{\Phi\text{-}2, 2, n})$. In the ordered basis above, we have \begin{equation}\label{eq:induced-action}
	\tilde A=\begin{bmatrix} B & 0 \\ C & E\end{bmatrix}
\end{equation}
where $E$ is of size $\binom{n}{2}\times\binom{n}{2}$ and representing the natural action of $P$ on $\F_2^n\wedge \F_2^n$. 

By \cite[Lemma 4.3]{BNV07}, $f_{\higp(n, \ell-n)}$ is equal to the number of isomorphism classes of codimension-$(\ell-n)$ subspaces of $\Phi(F_{\Phi\text{-}2, 2, n})$ under this action. Let $\Phi(F_{\Phi\text{-}2, 2, n})^*$ be the dual space of $\Phi(F_{\Phi\text{-}2, 2, n})$, with an ordered linear basis being $((x_1^2)^*, \dots, (x_n^2)^*, [x_1, x_2]^*, \dots, [x_{n-1}, x_n]^*)$. Let $P^*=\linspan\{(x_i^2)^*\mid i\in[n]\}$ and $D=\linspan\{[x_i,x_j]^*\mid 1\leq i<j\leq n\}$.  Naturally, $f_{\higp(n, \ell-n)}$ is also equal to the number of isomorphism classes of dimension-$(\ell-n)$ subspaces of $\Phi(F_{\Phi\text{-}2, 2, n})$ under the dual of this action. 

We now observe the following. 
\begin{fact}\label{fact:dual}
	Let $\vec{v}=\sum_{i\in[n]}a_i(x_i^2)^*+\sum_{1\leq j<k\leq n}b_{j,k}[x_j,x_k]^*\in \Phi(F_{\Phi\text{-}2, 2, n})^*$. If $\vec{v}$ is fixed by the induced action of $P\in \Aut(F_{\Phi\text{-}2, 2, n}/\Phi(F_{\Phi\text{-}2, 2, n}))$, then $\sum_{1\leq j<k\leq n}b_{j,k}[x_j,x_k]^*$ is fixed by the induced action of $P$ on $(\F_2^n\wedge \F_2^n)^*$.
\end{fact}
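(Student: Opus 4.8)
The plan is to read the claim off the block structure recorded in \eqref{eq:induced-action}. By that equation, the induced action of $P \in \Aut(F_{\Phi\text{-}2, 2, n}/\Phi(F_{\Phi\text{-}2, 2, n})) \cong \GL(n,2)$ on $\Phi(F_{\Phi\text{-}2, 2, n})$, in the ordered basis $(x_1^2,\dots,x_n^2,[x_1,x_2],\dots,[x_{n-1},x_n])$, is the block lower-triangular matrix $\tilde A = \begin{bmatrix} B & 0 \\ C & E \end{bmatrix}$, whose lower-right block $E$ is the matrix of the natural $\GL(n,2)$-action on $\F_2^n \wedge \F_2^n$ in the basis $([x_1,x_2],\dots,[x_{n-1},x_n])$. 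First I would pass to the dual: in the dual basis, the induced action of $P$ on $\Phi(F_{\Phi\text{-}2, 2, n})^*$ is obtained from $\tilde A$ by transposition and inversion (the inversion only replaces $P$ by $P^{-1}$ and plays no role below). Transposing a block lower-triangular matrix produces a block upper-triangular one, and inversion preserves this shape, so the dual-action matrix has the form $\begin{bmatrix} \ast & \ast \\ 0 & E' \end{bmatrix}$ in which the lower-right block $E'$ is exactly the matrix of the induced action of $P$ on $(\F_2^n \wedge \F_2^n)^*$ in the basis $([x_1,x_2]^*,\dots,[x_{n-1},x_n]^*)$.

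Given this, the proof is immediate. Write $\vec v$ in coordinates as the pair $(\vec a, \vec b)$, where $\vec a = (a_i)_{i\in[n]}$ lists the coefficients of $\vec v$ on the $(x_i^2)^*$ and $\vec b = (b_{j,k})_{1\le j<k\le n}$ its coefficients on the $[x_j,x_k]^*$. The hypothesis that $\vec v$ is fixed reads $\begin{bmatrix} \ast & \ast \\ 0 & E' \end{bmatrix}\begin{bmatrix} \vec a \\ \vec b \end{bmatrix} = \begin{bmatrix} \vec a \\ \vec b \end{bmatrix}$, and comparing the lower blocks gives $E' \vec b = \vec b$; that is, $\sum_{1\le j<k\le n} b_{j,k}[x_j,x_k]^*$ is fixed by the induced action of $P$ on $(\F_2^n \wedge \F_2^n)^*$. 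Equivalently, and more conceptually: $P^* := \linspan\{(x_i^2)^* \mid i\in[n]\}$ is the annihilator of the invariant subspace $D$, hence is itself invariant under the dual action; the natural projection $\Phi(F_{\Phi\text{-}2, 2, n})^* \to \Phi(F_{\Phi\text{-}2, 2, n})^*/P^* \cong (\F_2^n \wedge \F_2^n)^*$, which sends $\vec v$ to $\sum_{j<k} b_{j,k}[x_j,x_k]^*$, is $\GL(n,2)$-equivariant, and an equivariant linear map sends fixed points to fixed points.

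I do not expect a genuine obstacle: the statement is essentially a one-line reading-off from \eqref{eq:induced-action}. The only points demanding (routine) care are fixing the convention for the induced action on the dual space consistently with the way it is used just before the statement, and checking that the lower-right block of the dualised matrix is literally the induced action on $(\F_2^n \wedge \F_2^n)^*$ rather than merely conjugate to it — both of which hold because transposition and inversion interchange the two coordinate blocks while preserving block-triangularity. It is worth stressing that it is $P^*$, and not $D^* := \linspan\{[x_j,x_k]^* \mid 1\le j<k\le n\}$, that is invariant under the dual action, so the statement must be phrased in terms of the component modulo $P^*$ — which is exactly what is done.
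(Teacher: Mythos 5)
Your proposal is correct and follows essentially the same route as the paper: both arguments dualise the block lower-triangular matrix $\tilde A$ from Equation~\eqref{eq:induced-action}, observe that $\tilde A^{-t}$ is block upper-triangular with lower-right block $E'=E^{-t}$ recording the dual action on $(\F_2^n\wedge\F_2^n)^*$, and read off that the $[x_j,x_k]^*$-component of a fixed vector is itself fixed. Your closing equivariance reformulation (via the invariance of the annihilator $P^*$ and the quotient map) is a pleasant conceptual gloss but not a different proof.
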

\begin{proof}
	By Equation~\eqref{eq:induced-action}, the dual action of $P$ on the ordered basis $((x_1^2)^*, \dots, (x_n^2)^*, [x_1, x_2]^*,$ $\dots, [x_{n-1}, x_n]^*)$ is $\tilde A^{-t}=\begin{bmatrix} B' & C' \\ 0 & E'\end{bmatrix}$, where $E'$ is of size $\binom{n}{2}\times\binom{n}{2}$. Because of the $0$ left-lower block, we see that $\sum_{i\in[n]}a_i(x_i^2)^*$ do not affect the $E^*$-component of $\tilde A^{-t}\vec{v}$. Furthermore, $E'=E^{-t}$ records the dual action of $P$ on $V\wedge V$.
\end{proof}

Therefore, define $g(D^*\oplus P^*, m)$ as the number of orbits of the action of $\GL(n, 2)$ on $\Phi(F_{\Phi\text{-}2, 2, n})^*=D^*\oplus P^*$. Let $W\cong \F_2^m$, and let $g((D^*\oplus P^*)\otimes W)$ be the number of orbits of $\GL(n, 2)\times\GL(W)$ on $(D^*\oplus P^*)\otimes W$. When
$m=\lceil C\cdot n+R\rceil$ for constants $C$ and $R$, we claim that
$$g((D^*\oplus P^*)\otimes W)\leq (q-1+\frac{1}{q^{\Omega(n^2)}})\cdot \frac{q^{(\binom{n}{2}+n)\cdot m}}{|\GL(V)|\cdot|\GL(W)|},$$ and 
$$g(D^*\oplus P^*, m)\leq K\cdot (q-1+\frac{1}{q^{\Omega(n^2)}})\cdot \frac{|\Gr(V\wedge V\oplus V, m)|}{|\GL(V)|}.$$
This can be proved in the same way as the proof of Theorem~\ref{thm: tensors_frattini}. The key is to show that the analogue statement in Observation~\ref{obs:frattini} holds, thanks to Fact~\ref{fact:dual}. This concludes the proof for the $\higtwo$ case.

\medskip
\paragraph{Acknowledgements.} Y. L.'s research is supported in part by the National Key Research and Development Program of China (No. 2024YFE0102500), the National Nature Science Foundation of China (No. 62302346, No. 12441101), the Hubei Provincial Natural Science Foundation of China (No. 2024AFA045) and the “Fundamental Research Funds for the Central Universities”. 

Youming Qiao's research is supported in part by Australian Research Council DP200100950 and LP220100332. Part of this work was done while Youming was a member of the Institute for Advanced Study in Princeton supported by the Ky Fan and Yu-Fen Fan Endowment Fund.

\bibliographystyle{alpha}
\bibliography{references}

\end{document}